\documentclass[11pt]{amsart}

%\openup 5pt

\def\pr{\partial}

%%%% MODIFIED: 12/08/2011

%%%%%%%%%%%%%%%%%%%%%%%%%%%%%%%%%%%%%%%%%%%%%%%%%%%%%%%%%%%
%                 TEX MACRO FILE, F. BONNANS
%                       APRIL 27, 2003
%%%%%%%%%%%%%%%%%%%%%%%%%%%%%%%%%%%%%%%%%%%%%%%%%%%%%%%%%%%%

\newcommand{\ddt}{\frac{\rm d}{{\rm d} t} }

%%%%%%%%%%%%%%%%%%%%%% FRENCH %%%%%%%%%%%%%%%%%%%%%%%%%%%%%%
%%%%%%%%%%%%%%   FRANCAIS / ANGLAIS
%%% VERSION FRANCAISE

% TEXTE

% MATHS

%
%%% VERSION ANGLAISE
% \if {
  
% TEXTE

% MATHS

% } \fi
%%%%%%%%%%%%%%%%%%%%%%%%%%%%%%%%%%%%%%%%%%%%%%%%%%%%%%%%%%

\if {

}{{\caption}}
} \fi

%%%%%%%%%%%%%%%%%%%%%%%%%%%%%%%%%%%%%%%%%%%%%%%%%%%%%%%%%%%%
\newcommand\findem{\hfill{$\square$}\medskip}
%%%%%%%%%%%%%%%%%%%%%%%%%%%%%%%%%%%%%%%%%%%%%%%%%%%%%%%%%%%%
% \newcommand{\debdem}{\begin{proof}} \def\findem{\end{proof}}
%%%%%%%%%%%%%%%%%%%%%%%%%%%%%%%%%%%%%%%%%%%%%%%%%%%%%%%%%%%%

%%%%%%%%%%%%%%%%%%%%%%%%%%%%%%%% HAT %%%%%%%%%%%%%%%%%%%%%%%%

\def\uh{\hat{u}}
\def\vh{\hat{v}}
\def\wh{\hat{w}}
\def\xh{\hat{x}}
\def\yh{\hat{y}}

%%%%%%%%%%%%%%%%%%%%%%%%%%%%%%%% UPPERCASE HAT %%%%%%%%%%%%%%

%%%%%%%%%%%%%%%%%%%%%%%%%%%%%%%% GREEK HAT %%%%%%%%%%%%%%

%%%%%%%%%%%%%%%%%%%%%%%%%%%%%%%% BAR %%%%%%%%%%%%%%%%%%%%%%%%

\def\hb{\bar{h}}

\def\ub{\bar{u}}
\def\vb{\bar{v}}
\def\wb{\bar{w}}
\def\xb{\bar{x}}
\def\yb{\bar{y}}

%%%%%%%%%%%%%%%%%%%%%%%%%%%%%%%% UPPERCASE BAR %%%%%%%%%%%%%%

%%%%%%%%%%%%%%%%%%%%%%%%%%%%%%%% GREEK BAR %%%%%%%%%%%%%%

\def\xib{\bar{\xi}}

%%%%%%%%%%%%%%%%%%%%%%%%%%%%%%%% TILDE %%%%%%%%%%%%%%%%%%%%%%%%

%%%%%%%%%%%%%%%%%%%%%%%%%%%%%%%% UPPERCASE TILDE %%%%%%%%%%%%%%%%

%%%%%%%%%%%%%%%%%%%%%%%%%%%%%%%% BOLDFACE %%%%%%%%%%%%%%%%%%%%%%%
  
%%%%%%%%%%%%%%%%%%%%%%%%%%%%%%%% CAL %%%%%%%%%%%%%%%%%%%%%%%%%%%%

%%%%%%%%%%%%%%%%%%%%%%%%%%%% OTHER CAL %%%%%%%%%%%%%%%%%%%%%%%%%%

%%%%%%%%%%%%%%%%%%%%%%%%%%%%%%%% MATHCAL %%%%%%%%%%%%%%%%%%%%%%%%

\def\C{\mathcal{C}}

\def\H{\mathcal{H}}

\def\P{\mathcal{P}}
\def\Q{\mathcal{Q}}
\def\R{\mathcal{R}}

\def\U{\mathcal{U}}
\def\V{\mathcal{V}}
\def\W{\mathcal{W}}
\def\X{\mathcal{X}}
\def\Y{\mathcal{Y}}

%%%%%%%%%%%%%%%%%%%%%%% SPECIAL   %%%%%%%%%%%%%%%%%%%%%%%%
% NOTATION HAMILTONIEN

%%%%%%%%%%%%%%%%%%%%%%% UNDERLINE %%%%%%%%%%%%%%%%%%%%%%%%

%

%%%%%%%%%%%%%%%%%%%%%%% GREEKS %%%%%%%%%%%%%%%%%%%%%%%%
\def\eps{\varepsilon}

%%%%%%%%%%%%%%%%%%%%%%%% MATH OPERATORS %%%%%%%%%%%%%%%%

%%%%%%%%%%%%%%%% UNDERLINED AND OVERLINED LIMINF AND LIMSUP %%%

%%%%%%%%%%%%%%%%%%%%%%%%%%%%%%%% NUMBER AND NUMBER SPACES  %%%%
\def\half{\mbox{$\frac{1}{2}$}}

\def\1B{{\bf  1}}

\newcommand{\cR}{\mathbb{R}}

%%%%%%%%%%%%%%%%%%%%%%%%%%%%%%%%  ENVIRONMENTS %%%%%%%%%%%%%%%%%%
\newcommand\be{\begin{equation}}
\newcommand\ee{\end{equation}}
\newcommand{\benl}{\begin{equation*}}
\newcommand{\eenl}{\end{equation*}}
\newcommand\ba{\begin{array}}
\newcommand\ea{\end{array}}
\newcommand{\bean}{\begin{eqnarray*}}
\newcommand{\eean}{\end{eqnarray*}}

%%%%%%%%%%%%%%%%%%%%%%%%%%%%%%%%  SPACING %%%%%%%%%%%%%%%%%%

%%%%%%%%%%%%%%%%%%%%%%%%%%%%%%%%  MISC %%%%%%%%%%%%%%%%%%

%\newcommand{\eqref}[1]{(\ref{#1})}

\def\ds{\displaystyle}

\newcommand{\tras}{^\top}
\newcommand{\intT}{\int_0^T }

%%%%%%%%%%%%%%%%%%%%%%%%%%%%%%%%% EOF

\newcommand{\dtt}{\mathrm{d}t}
\newcommand{\mr}{\mathrm}

\usepackage{amsmath,amssymb,latexsym,amsthm,mathdots,yhmath,xcolor}

%\usepackage{graphicx,amsfonts,here}
%\usepackage{amscd,amssymb,amsmath,graphicx,verbatim,theorem,pst-text}

%\newtheorem{theorem}{Theorem}
%\newtheorem{lemma}{Lemma}[theorem]
%\newtheorem{corollary}{Corollary}[theorem]
%\newtheorem{proposition}{Proposition}[theorem]
%\newtheorem{conjecture}{Conjecture}[theorem]
%\newtheorem{definition}{Definition}[theorem]
%\newtheorem{example}{Example}[theorem]
%\newtheorem{axiom}{Axiom}[theorem]
%\newtheorem{question}{Question}[theorem]
%\newtheorem{remark}{Remark}[theorem]
%\newtheorem{assumption}{Assumption}[theorem]

%\numberwithin{equation}{section}
%%%%%%%%%%%%%%%%%%%%%%%%%%%%%%%%%%%%%%%%%%%%%%%%%%%%%%%%%
%           THEOREMS
\newtheorem{theorem}{Theorem}[section]
\newtheorem{lemma}[theorem]{Lemma}
\newtheorem{proposition}[theorem]{Proposition}
\newtheorem{corollary}[theorem]{Corollary}

\theoremstyle{remark}
{%\theorembodyfont{\rmfamily}
    \newtheorem{definition}[theorem]{Definition}
    
    \newtheorem{remark}[theorem]{Remark}

\newtheorem{assumption}[theorem]{Assumption}
}

\DeclareMathAlphabet{\mathpzc}{OT1}{pzc}{m}{it}

\newcommand{\gr}{>}
\newcommand{\mi}{<}

\newcommand{\lam}{[\lambda]}
\newcommand{\lamh}{[\hat\lambda]}

\newcommand{\cd}{(\cdot)}
 \usepackage[colorlinks=true]{hyperref}
 
  \textheight=8.2 true in
   \textwidth=5.0 true in
    \topmargin 30pt
     \setcounter{page}{1}

%\newcommand{\cR}{\mathbb{R}}

%%%%%%%%%%%%%%%%%%%%%%%%%%%%%%%%%%%%%%%%%%%%%%%%%%%%%%%%%%%%%%%%%%%%%%%%%%%%%%%%%%%%%%%%

\keywords{optimal control, singular control, second order optimality condition, Goh condition, Legendre-Clebsch, shooting algorithm}

\hyphenation{go-ver-ned}

\begin{document}

\title[Second order analysis of partially-affine control problems]{
Second order necessary and sufficient optimality conditions for singular solutions of partially-affine control problems}

\footnotetext{This article has been accepted for publication in Discrete Contin. Dyn. Syst. Ser. S.}

\author[M.S. Aronna]{M. Soledad Aronna}
\address{M.S. Aronna\\ Escola de Matem\'atica Aplicada, Funda\c c\~ ao Getulio Vargas, Praia de Botafogo 190,   22250-900 Rio de Janeiro - RJ, Brazil}
\email{soledad.aronna@fgv.br}

%\author[J.F. Bonnans]{J. Fr\'ed\'eric Bonnans}
%\address{J.F. Bonnans\\ INRIA-Saclay\\ CMAP Ecole Polytechnique\\ France}
%\email{frederic.bonnans@inria.fr}

\maketitle

%\footnotetext[1]{This article was published as the INRIA Research Report Nr. 7764.}

%\footnotetext[1]{This work was supported by the European Union under the 7th Framework Programme FP7-PEOPLE-2010-ITN  Grant agreement number 264735-SADCO.}

\begin{abstract} 
In this article we study optimal control problems for systems that are affine with respect to some of the control variables and nonlinear in relation to the others. We consider finitely many equality and inequality constraints on the initial and final values of the state.
We investigate singular optimal solutions for this class of problems, for which we obtain second order necessary and sufficient conditions for weak optimality in integral form. We also derive Goh pointwise necessary optimality conditions. 
We show an example to illustrate the results. 

%Afterwards, we propose a shooting algorithm and show that the above-mentioned  sufficient condition is also sufficient for the local quadratic convergence of the algorithm.
\end{abstract}

\section{Introduction}\label{Introduction}

The purpose of this paper is to investigate optimal control problems governed by systems of ordinary differential equations of the form
\benl
\dot{x}=f_0(x,u)+ \sum_{i=1}^m v_{i} f_i(x,u),\quad  {\rm  a.e.}\ {\rm  on}\ [0,T].
\eenl
Here $x:[0,T]\to \cR^n$ is the state variable,  $v_i:[0,T]\to \cR$ are the {\em affine} controls for $i=1,\dots m,$ while $u:[0,T]\to \cR^l$ is the vector of {\em nonlinear} controls and $f_i:\cR^{n+l}\to \cR^n$ is a vector field, for each $i=0,\dots m.$ 
%Note that this kind of system includes both the totally affine case when $l=0$ and the nonlinear case whenever $m=0.$ 

Many models that enter into this framework can be found in practice and, in particular, in the existing literature. Among these we can mention:  the Goddard's problem in three dimensions  \cite{Goddard}  analyzed in Bonnans et al. \cite{BLMT09}, several models concerning the motion of rockets as the ones treated in Lawden \cite{Law63}, Bell and Jacobson \cite{BelJac}, Goh \cite{GohThesis,Goh08}, Oberle \cite{Obe77}, Azimov \cite{Azi05} and Hull \cite{Hul11}; an hydrothermal electricity production problem studied in Bortolossi et al. \cite{BPT02}, the problem of atmospheric flight considered by Oberle in \cite{Obe90}, and the optimal production processes studied in Cho et al. \cite{ChoAbadParlar93} and Maurer at al. \cite{MauKimVos05}. All the systems investigated in these cited articles are {\em partially-affine} in the sense that they have at least one affine and at least one nonlinear control.

The subject of second order optimality conditions for these partially-affine problems has been studied by Goh in \cite{GohThesis,Goh66a,Goh67,Goh08}, Dmitruk in \cite{Dmi11}, Dmitruk and Shishov in \cite{DmiShi10}, Bernstein and Zeidan \cite{BerZei90}, Frankowska and Tonon \cite{FraTon13}, and Maurer and Osmolovskii \cite{MauOsm09}.
The first works were by Goh, who introduced a change of variables in \cite{Goh66a} and used it to obtain necessary optimality conditions in \cite{Goh66a,GohThesis,Goh66}, always assuming {\em normality} of the optimal solution. The necessary conditions we present imply those by Goh \cite{Goh66}, when there is only one multiplier (see Corollary \ref{CoroCBsym}).
Recently, Dmitruk and Shishov \cite{DmiShi10} analyzed the  quadratic functional associated with the second variation of the Lagrangian function, and provided a set of necessary conditions for the nonnegativity of this quadratic functional. Their results are consequence of a second order necessary condition that we present (see Theorem \ref{NCP2}).
In  \cite{Dmi11}, Dmitruk proposed, without proof, necessary and sufficient conditions for a problem having a particular structure: the affine control variable applies to a term depending only on the state variable, i.e. the affine and nonlinear controls are  {\em uncoupled} or, equivalently  $H_{uv}$ is identically zero, where $H$ denotes the {\em unmaximized Hamiltonian.} 
This hypothesis is not used in our work.
Nevertheless, the conditions established here coincide with those suggested in Dmitruk \cite{Dmi11}, when the latter are applicable.
In \cite{BerZei90}, Bernstein and Zeidan derived the Riccati equation for the {\em singular linear-quadratic regulator,} which is a modification of the classical linear-quadratic regulator where only some components of the control enter quadratically in the cost function.
Frankowska and Tonon proved in \cite{FraTon13} second order necessary conditions for problems with {\em closed} control constraints  and optimal controls containing arcs along which the second order derivative $H_{uu}$ of the unmaximized Hamiltonian vanishes.  The necessary conditions given in \cite{FraTon13} hold for problems either  with  no endpoint constraints, or with smooth endpoint constraints and additional hypotheses as {\em calmness} and the {\em abnormality} of  Pontryagin's Maximum Principle.
All the articles mentioned in this paragraph use {\em Goh's transformation} to derive their optimality conditions, as it is done in the current paper, while none of them proved sufficient conditions of second order which is the main contribution of this article.
It is worth mentioning that sufficient conditions were shown by Maurer and Osmolovskii in \cite{MauOsm09}, but for the case of a scalar control subject to bounds and {\em bang-bang} optimal solutions (i.e. no singular arc).  This structure is not studied here since no closed control constraints are considered and thus our optimal control is supposed to be {\em singular} along the whole interval.

 The contributions of this article are as follows.
We provide a pair of necessary and sufficient conditions in integral form for weak optimality of singular solutions of partially-affine problems (Theorems \ref{NCP2}-\ref{SC}). 
These conditions  are {\em `no gap'} in the sense that  the sufficient condition is obtained from the necessary one by strengthening an inequality. We consider fairly general endpoint constraints and we do not assume uniqueness of multiplier.
The main result is the sufficient condition of Theorem \ref{SC}, which, up to our knowledge, cannot been found in the existing literature, and has important practical applications. 
As a product of the necessary condition \ref{NCP2} we get the {\em pointwise Goh conditions} in Corollary \ref{CoroCBsym}, extending this way previous results (see \cite{Goh66,FraTon13}) to problems with general endpoint constraints, and removing the hypothesis of vanishing $H_{uu}$ imposed in \cite{FraTon13}. 
 In order to obtain the sufficient condition we impose a regularity assumption on the optimal controls, that in some practical situations is a consequence of the {\em generalized Legendre-Clebsch condition} (see Remark \ref{RemarkLC}). We provide a simple example to illustrate our results.

%When our second order conditions are applied to the particular cases having either $m=0$ or $l=0,$ they give already existing results as we point out throughout the article. 

As a main application of the sufficient condition provided in this article we can mention the proof of convergence of an associated {\em shooting algorithm} as stated in Aronna \cite{Aro13} and shown in detail in the technical report Aronna \cite{Aro11}. It is worth mentioning that, for practical interest, this shooting algorithm and its proof of convergence can be also used to solve partially-affine problems with bounds on the control and associated bang-singular solutions.

%Among the applications of the shooting method to the numerical solution of partially affine problems we can mention the articles Oberle \cite{OberleThesis,Obe90} and Oberle-Taubert \cite{ObeTau97}. In these articles the authors use a generalization of the algorithm that Maurer \cite{Mau76} suggested for totally affine systems. These works present interesting implementations of a shooting-like method to solve partially affine control problems having bang-singular or bang-bang solutions and, in some cases, running-state constraints are considered. No result on convergence is given in these articles. 

% In this paper we propose a shooting algorithm which can be also used to solve problems with bound on the controls. Our algorithm is an extension of the method for totally affine problems presented in Aronna et al. \cite{ABM11}. We give a theoretical support to this method by showing that the second order sufficient condition above-mentioned ensures the local quadratic convergence of the algorithm.

The article is organized as follows. In Section \ref{SectionPb} we present the problem, the basic definitions and first order optimality conditions.
In Section \ref{SectionSOC} we give the tools for second order analysis and establish a second order necessary condition. We introduce Goh's transformation in Section \ref{GohT}.
In Section \ref{SectionNC} we show a new second order necessary condition. In Section \ref{SectionSC} we present the main result of this article that is a second order sufficient condition. We show an example to illustrate our results in Section \ref{SectionExample}, while Section \ref{SectionConclusion} is devoted to the conclusions and possible extensions. Finally, we include an Appendix containing some proofs of technical results that are omitted throughout the article.
%A shooting algorithm is proposed in Section \ref{SectionShoot}, and in Section \ref{SectionWP} we prove that the sufficient condition above-mentioned guarantees the local quadratic convergence of the algorithm.

\vspace{8pt}

\noindent\textbf{Notations.}
Given a function $h$ of variable $(t,x)$, we write $D_th$ or $\dot{h}$ for its derivative in time, and $D_xh$ or $h_x$ for the differentiations with respect to space variables. The same convention  is extended to higher order derivatives.
We let $\cR^k$ denote the $k$-dimensional real space, i.e. the space of column real vectors of dimension $k;$ and by  $\cR^{k,*}$ its corresponding dual space, which consists of $k-$dimensional real row vectors.
By $L^p(0,T;\cR^k)$ we mean the Lebesgue space with domain equal to the interval $[0,T]\subset \cR$ and with values in $\cR^k.$ The notation $W^{q,s}(0,T;\cR^k)$ refers to the Sobolev spaces (see e.g. Adams \cite{Ada75}). Given $A$ and $B$ two $k\times k$ symmetric real matrices, we write $A\succeq B$ to indicate that $A-B$ is positive semidefinite.
Given two functions $k_1:\cR^N \rightarrow \cR^{M}$ and $k_2: \cR^N  \rightarrow \cR^L,$ we say that $k_1$ is a {\it big-O} of $k_2$ around 0 and write
\benl
k_1(x) = \mathcal{O} (k_2(x)),
\eenl
if there exists positive constants $\delta$ and $M$ such that $|k_1(x)| \leq M|k_2(x)|$ for $|x|<\delta.$ It is a {\it small-o} if $M$ goes to 0 as $|x|$ goes to 0, and in this case we write
\benl
k_1(x) = o(k_2(x)).
\eenl

\section{Statement of the problem and assumptions}\label{SectionPb}

\subsection{Statement of the problem.} 
We study the optimal control
problem (P) given by
\begin{align}
\min\,\,&\label{cost} \varphi_0(x(0),x(T)),\\
&\label{stateeq}\dot{x}=F(x,u,v),\quad {\rm  a.e.}\ {\rm  on}\ [0,T],\\
& \label{finaleq} \eta_j(x(0),x(T))=0,\quad
\mathrm{for}\
j=1\hdots,d_{\eta},\\
%&\label{controlcons} u_t\geq 0,\ \mr{for}\\mr{a.a.}\ t\in (0,T),\\
&\label{finalineq} \varphi_i(x(0),x(T))\leq 0,\quad
\mathrm{for}\
i=1,\hdots,d_{\varphi},\\
&\label{UV} u(t)\in U ,\,\, v(t)\in V,\quad {\rm  a.e.}\ {\rm  on}\ [0,T],
\end{align}
where the function $F\colon \cR^{n+l+m}\to\cR^n$ can be written as
\benl
F(x,u,v):=f_0(x,u) + \sum_{i=1}^m v_{i} f_i(x,u).
\eenl
Here  $f_i \colon \cR^{n+l}\rightarrow
\cR^n$ for $i=0,\hdots,m,$ 
$\varphi_i \colon  \cR^{2n}\rightarrow \cR$ for
$i=0,\hdots,d_{\varphi},$
$\eta_j \colon  \cR^{2n}\rightarrow \cR$ for
$j=1,\hdots,d_{\eta}.$  
The sets $U$ and $V$ are open domains of $\cR^l$ and $\cR^m,$ respectively. The control $u\cd$ is called {\em nonlinear,} while $v\cd$ is named {\em affine control.} 
We consider the function spaces $\U:=L^{\infty}(0,T;\cR^l)$ and   $\V:=L^{\infty}(0,T;\mathbb{R}^m)$  for the controls, and $\mathcal{X}:=W^{1,\infty}(0,T;\mathbb{R}^n)$ for the state. 
 When needed, we use $w\cd:=(x,u,v)\cd$ to refer to a point in
$\mathcal{W}:=\mathcal{X}\times \U \times \V.$ 
We call {\em trajectory} an element $w\cd\in\mathcal{W}$ that satisfies the state equation \eqref{stateeq}. If in addition, the endpoint constraints \eqref{finaleq} and
\eqref{finalineq} and the control constraint \eqref{UV} hold for $w\cd,$ then we say that it is a \textit{feasible trajectory} of problem (P). 
%We let $\mathcal{A}$ denote the \textit{set of feasible trajectories.} 

%\begin{remark}
%The control sets $\cR^l$ and $\cR^m$ can be replaced by any convex open domains included in them, without any modification of the proofs. We choose to make the presentation with $\cR^l$ and $\cR^m$ since it simplifies significantly the notation by reducing the number of sets and function spaces we deal with.
%\end{remark}

\vspace{5pt}

We consider the following regularity hypothesis throughout the article.

\begin{assumption} 
\label{regular}
All data functions have Lipschitz-continuous second order derivatives.
\end{assumption}
%Functions $\varphi_i$ and $\eta_j$ are asked to be twice differentiable.

In this paper we study optimality conditions for {\em weak minima} of problem (P). A feasible trajectory $\wh\cd=(\xh,\uh,\vh)\cd$ is said to be a {\em weak minimum} if there exists $\eps>0$ such that the cost function attains at $\wh\cd$ its minimum in the set of feasible
trajectories $w\cd=(x,u,v)\cd$ satisfying
\benl
\|x-{\xh}\|_{\infty}<\varepsilon,\quad
\|u-\uh\|_{\infty}<\varepsilon,\quad 
\|v-\vh\|_{\infty}\mi \eps.
\eenl

For the remainder of the article, we fix a nominal feasible trajectory $\wh\cd:=(\xh,\uh,\vh)\cd$ for which we provide optimality conditions. We assume that the controls $\uh\cd$ and $\vh\cd$ do not accumulate at the boundaries of $U$ and $V,$ respectively. This is, letting $\mathbb{B}$ denote the closed unit ball of $\cR^{l+m},$ we impose:
\begin{assumption} 
\label{uvboundary}
There exists $\delta>0$ such that $(\uh,\vh)(t)+\delta \mathbb{B} \subset U\times V,$ for almost all $t\in [0,T].$
\end{assumption}

An element $\delta w\cd\in \W$ is termed \textit{feasible variation for $\wh\cd$} if $\wh\cd+\delta
w\cd$ is a feasible trajectory for (P). 
For $\lambda=(\alpha,\beta,p(\cdot))$ in the space
$\cR^{d_{\varphi}+1,*}\times \cR^{d_{\eta},*}\times
W^{1,\infty}(0,T;\cR^{n,*}),$
we define the following functions:
\begin{itemize}
 \item the \textit{pre-Hamiltonian (or unmaximized Hamiltonian)} function $H[\lambda]\colon \cR^n\times \cR^m\times \cR^l\times [0,T]\to \cR$ given by
\benl
H[\lambda](x,u,v,t):=p(t)\left(f_0(x,u)+\sum_{i=1}^m v_i
f_i(x,u)\right),
\eenl
\item the \textit{endpoint Lagrangian} function $\ell[\lambda]\colon \cR^{2n}\to \cR,$
\benl
\ell[\lambda](x_0,x_T):=\sum_{i=0}^{d_{\varphi}}
\alpha_i\varphi_i(x_0,x_T)+\sum_{j=1}^{d_{\eta}}\beta_j
\eta_j(x_0,x_T),
\eenl
\item and the \textit{Lagrangian function} $\mathcal{L}[\lambda]\colon \W\to \cR,$
\be\label{lagrangian}
\mathcal{L}[\lambda](w):= \ell[\lambda](x(0),x(T)) 
+ \int_0^{T}
p \left(f_0(x,u)+\sum_{i=1}^{m}v_{i}f_i(x,u)-\dot
x\right)\dtt.
\ee
\end{itemize}
We assume, in sake of simplicity of notation that, whenever some argument of $F,$ $f_i,$ $H,$ $\ell,$ $\mathcal{L}$ or their derivatives is omitted, they are evaluated at $\wh\cd.$ 
If we further want to explicit that they are evaluated at time $t,$ we write $F[t],$ $f_i[t],$ etc. The same convention notations hold for other functions of the state, control  and multiplier that we define throughout the article.
We assume, without any loss of generality, that
$$
\varphi_i(\xh(0),\xh(T))=0,\ \mr{for}\ \mr{all}\ 
i=1,\hdots,d_{\varphi}.
$$

%%%%%%%%%%%%%%%%%%%%%%%%
%%% Lagrange multipliers
%%%%%%%%%%%%%%%%%%%%%%%%%

\subsection{Lagrange multipliers}
We introduce here the concept of {\em multiplier.} The second order conditions that we prove in this article are expressed in terms of the second variation of the Lagrangian function $\mathcal{L}$ given in \eqref{lagrangian} and the {set of Lagrange multipliers} associated with $\wh\cd$ that we define below.

\begin{definition}
\label{DefMul}
An element $\lambda=(\alpha,\beta,p(\cdot))\in
\cR^{d_{\varphi}+1,*}\times \cR^{d_{\eta},*}\times
W^{1,\infty}(0,T;\cR^{n,*})$ is a \textit{Lagrange multiplier}
associated with $\wh\cd$ if it satisfies the following conditions:
\begin{align}
\label{nontriv}&|\alpha|+|\beta|=1,\\
&\label{alphapos}\alpha=(\alpha_0,\alpha_1,\hdots,\alpha_{d_{\varphi}})\geq0,
\end{align}
the function $p(\cdot)$ is solution of the \textit{costate
equation} 
\be
\label{costateeq}
-\dot{p}(t)=H_x[\lambda](\xh(t),\uh(t),\vh(t),t),
\ee
it satisfies the \textit{transversality conditions}
\be
\label{transvcond}
\begin{split}
p(0)&=-D_{x_0}\ell\lam(\xh(0),\xh(T)),\\
p(T)&=D_{x_T}\ell\lam(\xh(0),\xh(T)),
\end{split}
\ee
and the \textit{stationarity conditions} 
\be
\label{stationarity}
\left\{
\ba{l}
\vspace{3pt} \ds  H_u\lam(\xh(t),\uh(t),\vh(t),t)=0,\\
H_v\lam(\xh(t),\uh(t),\vh(t),t)=0,
\ea
\right.
\quad {\rm a.e.}\ {\rm on}\ [0,T].
\ee
 We let $\Lambda$ denote the {\it set of Lagrange multipliers} associated with $\wh\cd.$
\end{definition}

The following result constitutes a {\em first order necessary condition} and yields the existence of Lagrange multipliers.
\begin{theorem}
\label{LambdaCompact}
If $\wh\cd$ is a weak minimum for (P), then the set $\Lambda$ is non empty and compact.
\end{theorem}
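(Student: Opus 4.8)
The plan is to recast (P) near $\wh\cd$ as a smooth, finitely constrained minimization problem, to produce a nontrivial multiplier by a Fritz--John type Lagrange rule, and then to identify the resulting abstract stationarity with the adjoint equation \eqref{costateeq}, the transversality conditions \eqref{transvcond} and the stationarity relations \eqref{stationarity}; compactness will follow from the linear dependence of $p\cd$ on $(\alpha,\beta)$. First I would discard the control constraint \eqref{UV}: by Assumption \ref{uvboundary} the controls $(\uh,\vh)\cd$ stay at distance at least $\delta$ from the boundary of $U\times V$, so on a sufficiently small $L^\infty$-neighborhood of $\wh\cd$ the constraint \eqref{UV} is inactive and can be ignored. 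Taking $x_0:=x(0)\in\cR^n$ and $(u,v)\cd\in\U\times\V$ as free variables, Cauchy--Lipschitz together with Assumption \ref{regular} yields a unique solution $x\cd\in\X$ of \eqref{stateeq} depending in a $C^{1,1}$ manner on $(x_0,u,v)$, with endpoint $x(T)=x(T;x_0,u,v)$. Hence, locally, (P) is equivalent to minimizing $J(x_0,u,v):=\varphi_0(x_0,x(T))$ over $\cR^n\times\U\times\V$ subject to the finitely many equalities $\eta_j(x_0,x(T))=0$ and inequalities $\varphi_i(x_0,x(T))\le0$, all $C^1$ by the chain rule.

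Next I would apply the Lagrange multiplier rule in Fritz--John form to this problem, now unconstrained in the control, at the local minimizer $(\xh(0),\uh,\vh)$. Since no constraint qualification is assumed and Definition \ref{DefMul} permits $\alpha_0=0$, the rule always returns multipliers $\alpha=(\alpha_0,\dots,\alpha_{d_\varphi})\ge0$ and $\beta\in\cR^{d_\eta,*}$, not all zero, with complementary slackness --- automatic here because $\varphi_i(\xh(0),\xh(T))=0$ for all $i$ --- so that the Fr\'echet derivative of $(x_0,u,v)\mapsto\ell\lam(x_0,x(T;x_0,u,v))$ vanishes. Rescaling produces the normalization \eqref{nontriv}, and nonnegativity is \eqref{alphapos}.

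To obtain the pointwise form I would introduce $p\cd$ as the solution of the linear costate equation \eqref{costateeq} with terminal condition $p(T)=D_{x_T}\ell\lam$. The classical adjoint identity then represents the derivatives of $x(T)$ with respect to $x_0$, $u$ and $v$ through $p\cd$: the $u$- and $v$-directional derivatives give $H_u\lam=0$ and $H_v\lam=0$ a.e., that is \eqref{stationarity}, while the $x_0$-derivative gives $p(0)=-D_{x_0}\ell\lam$, completing \eqref{transvcond}. Thus $\lambda=(\alpha,\beta,p\cd)\in\Lambda$, so $\Lambda\neq\emptyset$. For compactness, fixing $(\alpha,\beta)$ determines $p\cd$ uniquely as the solution of \eqref{costateeq} with terminal data $p(T)=D_{x_T}\ell\lam$, and $(\alpha,\beta)\mapsto p\cd$ is linear and continuous into $W^{1,\infty}(0,T;\cR^{n,*})$, since both the terminal datum and the equation are linear in $(\alpha,\beta)$. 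The admissible $(\alpha,\beta)$ form the subset of the compact sphere $\{\,|\alpha|+|\beta|=1,\ \alpha\ge0\,\}$ cut out by the closed linear conditions $p(0)=-D_{x_0}\ell\lam$ and \eqref{stationarity}; being closed in a compact set it is compact, and $\Lambda$ is its image under the continuous map above, hence compact.

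The main obstacle is the production of a \emph{nontrivial} multiplier. One must verify the $C^1$ (indeed $C^{1,1}$) regularity of the reduced constraint and cost maps --- which rests on the smooth dependence of $x(T)$ on $(x_0,u,v)$ and on Assumption \ref{regular} --- and invoke the non-qualified (Fritz--John) form of the multiplier rule, so that $(\alpha,\beta)\neq0$ is guaranteed without any surjectivity hypothesis on the linearized constraints; one then carries out the adjoint computation that turns the functional-analytic stationarity into \eqref{stationarity} and \eqref{transvcond}. By contrast, the compactness argument is routine, using only that $p\cd$ depends linearly and continuously on $(\alpha,\beta)$.
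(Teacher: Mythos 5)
Your proof is correct and follows essentially the same route as the paper: the paper obtains existence by citing an abstract Fritz--John-type multiplier rule (Milyutin--Osmolovskii), which is exactly what you reconstruct via the reduction to the finitely constrained problem in $(x(0),u\cd,v\cd)$ --- the same reduction and adjoint computation the paper itself carries out in its Appendix for Theorem \ref{classicalNC} (cf.\ \eqref{DM6}). Your compactness argument, namely that $p\cd$ depends linearly and continuously on $(\alpha,\beta)$ and that the normalized admissible $(\alpha,\beta)$ form a closed subset of a compact sphere, is identical to the paper's.
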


\begin{proof}
The existence of a Lagrange multiplier follows from  Milyutin-Osmolovskii \cite[Thm. 2.1]{MilOsm98} or equivalent results proved in Alekseev et al.  \cite{AleTikFom79} and Kurcyusz-Zowe \cite{KurZow}. 
In order to prove the compactness, observe that $\Lambda$ is closed and that
$p(\cdot)$ may be expressed as a linear continuous
mapping of $(\alpha,\beta).$ Thus, since the
normalization \eqref{nontriv} holds, $\Lambda$ is necessarily a 
finite-dimensional compact set.
\end{proof}

In view of previous Theorem \ref{LambdaCompact}, note that  $\Lambda$ can be
identified with a compact subset of $\cR^s,$ where
$s:=d_{\varphi}+d_{\eta}+1.$
The main results of this article are stated on a restricted subset  of $\Lambda$ for which the matrix $D^2_{(u,v)^2} H\lam (\wh,t)$ is singular and, consequently, the pairs $(\wh,\lambda)$ result  to be {\em singular extremals}.  We comment again on this fact in Remark \ref{RemSing} below.

\vspace{4pt}

Given $(\xb_0,\ub(\cdot),\vb(\cdot))\in \cR^n\times \U\times \V,$ consider 
the \textit{linearized state equation}
\begin{align} 
\label{lineareq} 
\dot{\xb} &= F_{x}\,\xb + F_{u}\,\ub + F_{v}\,\vb,\quad {\rm a.e.}\  {\rm on}\ [0,T],\\
\label{lineareq0}
\xb(0) &= \xb_0.
\end{align}
The solution $\xb\cd$ of \eqref{lineareq}-\eqref{lineareq0} is called
\textit{linearized state variable.}

\if{
\begin{remark}
For the interest of the reader, we explicit the expression of the matrices involved in \eqref{lineareq}. For each $t\in [0,T],$ $F_{x}$ is an $n\times n-$matrix given by $\frac{\pr f_0}{\pr x}(\xh,\uh)+\sum_{i=1}^m \vh_{i} \frac{\pr f_i}{\pr x}(\xh,\uh),$ $F_{u}$ is $n\times l$ and is equal to $\frac{\pr f_0}{\pr u}(\xh,\uh)+\sum_{i=1}^m \vh_{i} \frac{\pr f_i}{\pr u}(\xh,\uh)$ and, finally, $F_{v}$ is an $n\times m-$matrix whose $i$th column is $f_i(\xh,\uh),$ for $i=1,\dots,m.$
\end{remark}
}\fi

%%%%%%%%%%%% CRITICAL CONES %%%%%%%%%%%%%%%

\subsection{Critical cones}\label{ParCritical}
We define here the sets of critical directions associated with $\wh\cd,$ both in the $L^{\infty}$- and the $L^2$-norms. Even if we are working with control variables in $L^{\infty}$ and hence the control perturbations are naturally taken in $L^{\infty},$ the second order analysis involves quadratic mappings that require to continuously extend the cones to $L^2.$ 

Set
$\X_2:=W^{1,2}(0,T;\cR^n),$ $\U_2:=L^2(0,T;\cR^l)$ and
$\V_2:=L^2(0,T;\cR^m),$ and write $\W_2:=\X_2\times
\U_2\times \V_2$ to refer to the corresponding product
space. 
Given $\wb\cd\in\W_2$ satisfying the linearized state equation \eqref{lineareq}-\eqref{lineareq0}, consider
the \textit{linearization of the endpoint constraints and cost function,}
\begin{gather}
\label{linearconseq}
D\eta_j(\xh(0),\xh(T))(\xb(0),\xb(T))=0,\quad {\rm for}\  j=1,\hdots,d_{\eta},
\\
\label{linearconsineq}
 D\varphi_i(\xh(0),\xh(T))(\xb(0),\xb(T))\leq 0,\quad {\rm for}\ i=0,\hdots,d_{\varphi}.
\end{gather}
The \textit{critical cones} in $\W_2$ and $\W$ are given, respectively, by
\begin{gather}
\label{C2}\C_2:=\{\wb\cd\in\W_2:\text{\eqref{lineareq}-\eqref{lineareq0}}\ \text{and}\ \text{\eqref{linearconseq}-\eqref{linearconsineq}}\ \text{hold}\},\\
\label{C} \C:= \C_2 \cap \W.
\end{gather}
The following density result holds.

\begin{lemma}
\label{conedense}
The critical cone $\C$ is dense in $\C_2$ with respect to the $\W_2$-topology.
\end{lemma}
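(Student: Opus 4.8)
The plan is to approximate an arbitrary critical direction by truncating its controls, re-solving the linearized dynamics, and then removing the resulting endpoint-constraint defect by a finite-dimensional correction that vanishes in the limit.

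First I would fix $\wb\cd=(\xb,\ub,\vb)\cd\in\C_2$ and set $\ub^k:=\ub\,\1B_{\{|\ub|\le k\}}$ and $\vb^k:=\vb\,\1B_{\{|\vb|\le k\}}$, so that $\ub^k\cd\in\U$, $\vb^k\cd\in\V$ and $\ub^k\cd\to\ub\cd$, $\vb^k\cd\to\vb\cd$ in $L^2$ by dominated convergence. Keeping the same initial value $\xb^k(0):=\xb(0)$, let $\xb^k\cd$ solve the linearized equation \eqref{lineareq}-\eqref{lineareq0} with controls $\ub^k\cd,\vb^k\cd$. By Assumption \ref{regular} the coefficients $F_x,F_u,F_v$ lie in $L^\infty$, hence $\xb^k\cd\in W^{1,\infty}$ and $\wb^k\cd:=(\xb^k,\ub^k,\vb^k)\cd\in\W$ again solves \eqref{lineareq}-\eqref{lineareq0}. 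A Gronwall estimate applied to $\xb^k-\xb$, which has zero initial value and source $F_u(\ub^k-\ub)+F_v(\vb^k-\vb)$, gives $\xb^k\cd\to\xb\cd$ in $W^{1,2}$, whence $\wb^k\cd\to\wb\cd$ in $\W_2$. In particular this already shows that the space of $\W$-valued solutions of \eqref{lineareq}-\eqref{lineareq0} is dense, in the $\W_2$-topology, in its $\W_2$-valued counterpart.

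The trajectories $\wb^k\cd$ need not satisfy \eqref{linearconseq}-\eqref{linearconsineq}, and repairing this is the heart of the matter. I split the inequality indices into the active set $I:=\{i:\,D\varphi_i(\xh(0),\xh(T))(\xb(0),\xb(T))=0\}$ and its complement. Since $W^{1,2}\hookrightarrow C([0,T];\cR^n)$, the endpoint map $\wb\cd\mapsto(\xb(0),\xb(T))$ is $\W_2$-continuous, so for inactive $i$ the strict inequality in \eqref{linearconsineq} persists for $k$ large. I gather the $d_\eta$ equality functionals of \eqref{linearconseq} together with the active inequality functionals ($i\in I$) into a single continuous linear map $\Phi\colon\W_2\to\cR^N$, restricted to solutions of \eqref{lineareq}-\eqref{lineareq0}. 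The key observation is that, because the $\W$-valued solutions are dense among the $\W_2$-valued ones and $\Phi$ has finite-dimensional range, the image under $\Phi$ of the $\W$-valued solutions is a linear, hence closed, subspace that is dense in, and therefore equal to, the image of all $\W_2$-valued solutions. Consequently $-\Phi(\wb^k\cd)$ belongs to that image, so I can solve $\Phi(\zeta^k\cd)=-\Phi(\wb^k\cd)$ with $\zeta^k\cd$ a $\W$-valued solution of \eqref{lineareq}-\eqref{lineareq0}. Choosing $\zeta^k\cd$ in a fixed finite-dimensional complement of $\Ker\Phi$ yields $\|\zeta^k\cd\|_{\W_2}\le C\,|\Phi(\wb^k\cd)|$; and since $\Phi(\wb\cd)=0$ (both the equalities and the active inequalities vanish at $\wb\cd$), one has $\Phi(\wb^k\cd)=\Phi(\wb^k\cd-\wb\cd)\to0$, so $\zeta^k\cd\to0$ in $\W_2$. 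Then $\wb^k\cd+\zeta^k\cd$ solves \eqref{lineareq}-\eqref{lineareq0}, lies in $\W$, fulfills \eqref{linearconseq} exactly and the active constraints of \eqref{linearconsineq} with equality (hence $\le 0$) and the inactive ones for $k$ large; thus $\wb^k\cd+\zeta^k\cd\in\C$ and converges to $\wb\cd$.

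The main obstacle is precisely the treatment of the active inequality constraints: a correction that only fixes the equalities could drive an active inequality to the infeasible side, while forcing every active inequality to be strictly negative would demand a Slater-type direction that need not exist for a general critical cone. The device that resolves this is to freeze the active inequalities as equalities in the correction step, matching them to their value $0$ at $\wb\cd$; this keeps them feasible and reduces the entire correction to solving a linear system within the range of $\Phi$, a range that is automatically accessible to $\W$-valued directions by the density argument above.
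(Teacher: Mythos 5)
Your proof is correct, but it takes a genuinely different route from the paper's. The paper disposes of Lemma \ref{conedense} in one line by invoking the abstract cone-density result of Dmitruk (Lemma \ref{lemmadense}), applied with $X$ the $\W_2$-solutions of \eqref{lineareq}--\eqref{lineareq0}, $Y$ the $\W$-solutions, and $Z=\C_2$; the density of $Y$ in $X$ (your truncation-plus-Gronwall step) is left implicit, and the handling of the finitely many faces is entirely delegated to the cited lemma, whose own proof proceeds by a convexity/Slater-point argument combined with a Dubovitskii--Milyutin induction on the number of inequalities. You instead re-derive the needed cone-density statement from scratch: after the same linear-space density step, you repair the endpoint defect by an explicit finite-dimensional correction, exploiting that the constraint map $\Phi$ (equalities plus \emph{active} linearized inequalities) has finite-dimensional range, so that $\Phi(Y)=\Phi(X)$ automatically and a bounded right inverse on a complement of $\Ker\Phi$ yields $\zeta^k\cd\to 0$. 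Your device of freezing the active inequalities as equalities is exactly what lets you bypass the Slater-point case distinction in Dmitruk's argument. What each approach buys: the paper's is shorter and applies verbatim to any finite-faced cone in a locally convex space without identifying active sets; yours is self-contained, constructive, and even quantitative (the correction is $\mathcal{O}(|\Phi(\wb^k\cd)|)$). Both are sound; just make sure, as you do, that the set of solutions of \eqref{lineareq}--\eqref{lineareq0} with free initial value $\xb_0$ is treated as a linear space, so that $\Phi(Y)$ is indeed a (closed) subspace of $\cR^N$.
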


The proof of previous lemma follows from the
following technical result (due to Dmitruk
\cite[Lemma 1]{Dmi08}).

\begin{lemma}[on density of cones]
\label{lemmadense}
Consider a locally convex topological space $X,$ a
finite-faced cone $Z\subset X,$ and a linear
space $Y$ dense in $X.$ Then the cone $Z\cap Y$ is dense in $Z.$
\end{lemma}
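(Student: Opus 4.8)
The plan is to reduce the statement to a finite-dimensional linear-algebra fact, exploiting that a finite-faced cone is cut out by \emph{finitely many} continuous linear constraints, so that membership in the cone depends only on the image of $x$ under a continuous linear map with finite-dimensional range. Recall that a finite-faced cone is a finite intersection of closed half-spaces through the origin, so we may write $Z=\{x\in X:\langle a_i,x\rangle\le 0,\ i=1,\dots,N\}$ for finitely many continuous linear functionals $a_1,\dots,a_N$ on $X.$ I set $\Phi:=(\langle a_1,\cdot\rangle,\dots,\langle a_N,\cdot\rangle)\colon X\to\cR^N,$ a continuous linear map, and $K:=\{\xi\in\cR^N:\xi_i\le 0,\ i=1,\dots,N\},$ so that $Z=\Phi^{-1}(K).$ The guiding idea is that $\Phi$ sees only a finite-dimensional quotient of $X,$ on which a dense subspace is automatically the whole space.

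First I would establish the key reduction $\Phi(Y)=\Phi(X).$ Since $\Phi$ is continuous and linear, $\Phi(Y)$ is a linear subspace of the finite-dimensional space $\Phi(X)\subseteq\cR^N;$ moreover, as $Y$ is dense in $X$ and $\Phi$ is continuous, $\Phi(Y)$ is dense in $\Phi(X).$ But a dense linear subspace of a finite-dimensional space is closed and hence equals that space, so $\Phi(Y)=\Phi(X)=:V.$

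Next I would construct a continuous linear right inverse of $\Phi$ that takes values in $Y.$ Using the previous step, I choose $e_1,\dots,e_r\in Y$ whose images $\Phi(e_1),\dots,\Phi(e_r)$ form a basis of $V;$ then the induced map $\cR^r\to V,$ $\xi\mapsto\sum_k\xi_k\Phi(e_k),$ is an isomorphism, and composing its inverse $T\colon V\to\cR^r$ with $\xi\mapsto\sum_k\xi_k e_k$ yields a linear map $R\colon V\to Y\subseteq X$ satisfying $\Phi\circ R=\mathrm{id}_V.$ Because $V$ is finite-dimensional, $R$ is automatically continuous. With $R$ in hand I approximate and correct: given $z\in Z$ and an arbitrary convex balanced $0$-neighborhood $\mathcal O$ in $X,$ continuity of $R$ and $\Phi$ provides a $0$-neighborhood $\mathcal O'$ with $R(\Phi(\mathcal O'))\subseteq\tfrac12\mathcal O,$ and density of $Y$ gives $\tilde y\in Y$ with $\tilde y-z\in\mathcal O'\cap\tfrac12\mathcal O.$ Setting $y:=\tilde y-R(\Phi\tilde y-\Phi z),$ I get $y\in Y,$ while $\Phi y=\Phi\tilde y-(\Phi\tilde y-\Phi z)=\Phi z\in K,$ so $y\in Z;$ and $y-z=(\tilde y-z)-R\Phi(\tilde y-z)\in\tfrac12\mathcal O+\tfrac12\mathcal O\subseteq\mathcal O.$ Thus every $0$-neighborhood of $z$ meets $Z\cap Y,$ which proves that $Z\cap Y$ is dense in $Z.$

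The conceptual crux—and the step I expect to be the real obstacle to \emph{discover}, though it is short to verify—is the finite-dimensional reduction $\Phi(Y)=\Phi(X)$ together with the lifting $R.$ Its payoff is that the correction term $R(\Phi\tilde y-\Phi z)$ lies in $Y,$ is as small as we wish (by continuity of $R$), and makes $\Phi y$ equal to $\Phi z$ \emph{exactly}; it is this exact identity, rather than mere approximate feasibility, that forces $y\in Z.$ Note that the cone/orthant structure is barely used: the argument works verbatim for any $Z=\Phi^{-1}(C)$ with $\Phi$ continuous linear of finite rank, which is precisely the content of ``finite-faced.'' One should take care to phrase density through $0$-neighborhoods rather than sequences, since $X$ is only assumed locally convex and need not be metrizable.
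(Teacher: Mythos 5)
Your proof is correct, but it follows a genuinely different route from the one the paper relies on (Dmitruk's original argument, which the paper cites and reproduces). Dmitruk's proof is an induction on the number of constraints defining $Z$: equality constraints are first eliminated by a convexity/intermediate-value argument (two points of $Y$ on opposite sides of the hyperplane, joined by a segment inside the given neighborhood); then, for the inequalities, one distinguishes whether a Slater point exists --- if yes, $\mathop{\rm int} Z$ is a nonempty open set and meets the dense subspace $Y$ directly; if not, the Dubovitskii--Milyutin theorem produces nonnegative multipliers $\alpha_j$, not all zero, with $\sum_j \alpha_j q_j = 0$, which forces one inequality to be an implicit equality on $Z$ and reduces the number of constraints, closing the induction. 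Your argument replaces all of this case analysis by a single structural observation: writing $Z = \Phi^{-1}(K)$ with $\Phi$ continuous linear of finite rank, density of $Y$ gives $\Phi(Y) = \Phi(X)$ (a dense subspace of a finite-dimensional space is everything), which yields a continuous right inverse $R$ of $\Phi$ taking values in $Y$; the correction $y = \tilde y - R(\Phi\tilde y - \Phi z)$ then matches $\Phi z$ \emph{exactly}, so feasibility is automatic. What your approach buys: no induction, no Dubovitskii--Milyutin, no Slater dichotomy, and a strictly more general statement --- density of $\Phi^{-1}(C)\cap Y$ in $\Phi^{-1}(C)$ for an \emph{arbitrary} set $C\subseteq\cR^N$, the cone structure playing no role. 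What Dmitruk's approach buys: it is elementary in its ingredients (convexity and density only, plus a separation theorem in the degenerate case) and never needs to construct a lifting, staying entirely inside the cone formulation. Both proofs correctly handle the equality constraints appearing in the paper's applications (your half-space formulation absorbs each equality as two inequalities).
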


%%%%%%%%% DENSITY LEMMA'S PROOF %%%%%%%%%%%%%%

\if{
Let the cone $C$ be given by

\be
C=\{x\in X:(p_i,x)=0,\ \mathrm{for}\ i=
1,\hdots,\mu,\
(q_j,x)\leq 0,\ \mathrm{for}\ j=1,\hdots,\nu\}.
\ee

Let us show first, without lost of generality, that
the
equality constraints can be removed from the
formulation.
It suffices to consider the case where $C$ is given
by only
one equality $(p,x)=0.$

Take any point $x(0)\in C$ and a convex
neighborhood$\mathcal{O}(x(0)).$ We have to show that there
exists $x$
in $C\cap L\cap \mathcal{O}(x(0)).$ Since the set
$(p,x)<0$
is open, its intersection with $\mathcal{O}(x(0))$ is
open
too and obviously nonempty, hence it contains a
point $x_1$
from the set $L,$ because the last one is dense in
$X.$
Similarly, the intersection of the sets $(p,x)<0$
and
$\mathcal{O}(x(0))$ contains a point $x_2\in L.$
Since
$\mathcal{O}(x(0))$ is convex, it contains a point
$x$ such
that $(p,x)=0,$ which belongs to $C$ and to $L\cap
\mathcal{O}(x(0)).$

We can then consider only the case where $C$ is
given by a
finite number of inequalities. Suppose first that
there
exists $\xh\in C$ such that $(q_j,\xh)<0$ for all
$j,$
hence $\xh \in \mathrm{int}{C}.$ Take any $x(0)\in C$
and
any convex neighborhood $\mathcal{O}(x(0)).$ We have
to find
a point $x\in C\cap \mathcal{O}(x(0))\cap L.$ We know
that,
for any positive $\varepsilon,$ the point
$x_{\varepsilon}:=x(0)+\varepsilon \xh$ lies in
$\mathrm{int}(C),$ and then there exists a positive
$\varepsilon$ such that this point lies also in
$\mathcal{O}(x(0)).$ Thus, the open set
$\mathrm{int}{C}\cap
\mathcal{O}(x(0))$ is nonempty, and then contains a
point
$x$ from the dense set $L.$

Suppose now that the above point
$\xh\in\mathrm{int}{C}$
does not exist, and, without lost of generality,
that
$q_j\neq 0$ for every $j.$ In this case, by the
Dubovitskii-Milyutin theorem, there exist
multipliers
$\alpha_j\geq0,\ j=1,\hdots,\nu,$ not all zero, such
that
Euler-Lagrange equation holds: $\alpha_1
q_1+\hdots+\alpha_{\nu}q_{\nu}=0.$ Suppose, without
lost of
generality, that $\alpha_{\nu}>0.$ Then, for all $x
\in C$
we actually have $(q_{\nu},x)=0,$ not just $\leq0.$
This
means that the cone $C$ can be given by the
constraints
$(q_j,x)\leq 0,\ j=1,\hdots,\nu-1,\ (q_{\nu},x)=0.$
But, as
was already shown, the last equality can be removed,
so the
cone can be given by a smaller number of
inequalities.
Applying induction arguments, we arrive at a
situation when
either all the inequalities are changed into
equalities and
then removed, or the strict inequalities have a
nonempty
intersection. Since both cases are already
considered, the
proof is complete.
}\fi

%%%%%%%%%%%%%%%%%%%%%%%%%%%%%

\noindent{\em Proof of Lemma \ref{conedense}.}
Set $X:=\{\wb\cd\in\W_2:\text{\eqref{lineareq}-\eqref{lineareq0}}\, \text{hold}\},$ 
$Y:=\{\wb\cd\in\W:\text{\eqref{lineareq}-\eqref{lineareq0}}\, \text{hold}\},$ and $Z:=\C_2$ and apply Lemma \ref{lemmadense}. The desired density follows.
\findem

%%%%%% SECOND ORDER ANALYSIS %%%%%%%%%%%%

\section{Second order analysis}\label{SectionSOC}
We begin this section by giving an expression of the {\em second order derivative of the Lagrangian function $\mathcal{L},$} in terms of derivatives of $\ell$ and $H.$ We let $\Omega$ denote this second variation.
All the second order conditions we present are established in terms of either $\Omega$ or some transformed form of $\Omega.$ The main result of the current section is the necessary condition in Theorem \ref{strengthNC}, which is applied in Section \ref{SectionNC} to get the stronger condition given in  Theorem \ref{NCP2}.

\subsection{Second variation}

Let us consider the quadratic mapping
\be 
\label{Omega}
\begin{split}
\Omega  \lam  &(\xb,\ub,\vb):= \,
\half D^2\ell\lam(\xh(0),\xh(T))(\xb(0),\xb(T))^2  + \intT \Big(\half\xb\tras H_{xx}\lam \xb \ \\
& + \ub\tras H_{ux}\lam\xb +
\vb\tras H_{vx}\lam\xb + \half\ub\tras H_{uu}\lam\ub + \vb\tras
H_{vu}\lam\ub\Big) \dtt.
\end{split}
\ee
%where, whenever the argument is missing, the corresponding function is evaluated on the reference trajectory $\wh.$ 

\if{
\begin{remark}
Note that $H_{xx}\lam = p\left(\frac{\pr^2 f_0}{\pr x^2} + \sum_{k=1}^m \vh_k \frac{\pr^2 f_k}{\pr x^2}\right)$ is an $n\times n$-matrix, 
$H_{ux}\lam$ is equal to $p\left(\frac{\pr^2 f_0}{\pr u\pr x}+\sum_{k=1}^m \vh_k \frac{\pr^2 f_k}{\pr u\pr x}\right)$ and it is $l\times n,$ 
$H_{vx}\lam$ is an $m\times n-$matrix whose $i$th row is given by $H_{v_ix}\lam= p \frac{\pr f_i}{\pr x}.$ The matrix $H_{uu}\lam$ is equal to $p\left(\frac{\pr^2 f_0}{\pr u^2}+\sum_{k=1}^m \vh_k \frac{\pr^2 f_k}{\pr u^2}\right)$ and it is $l\times l,$ and the $i-$th. row of the $m\times l-$matrix $H_{vu}\lam$ is $H_{v_i u}\lam= p \frac{\pr f_i}{\pr u}.$
\end{remark}
}\fi

%%%%%%%%%%%%% LAGRANGIAN EXPANSION %%%%%%%%%%%%%%%%%%%%%%%%

The result that follows gives an expression of the Lagrangian $\mathcal{L}$ at the nominal trajectory $\wh\cd.$ For the sake of simplicity, the time variable is omitted in the statement.

\begin{lemma}[Lagrangian expansion] 
\label{expansionlagrangian}
Let $w\cd=(x,u,v)\cd\in\W$ be a trajectory and set $\delta w\cd=(\delta x,\delta u,\delta v)\cd:=w\cd-\wh\cd.$
Then, for every multiplier $\lambda\in\Lambda,$ the following expansion of the Lagrangian holds
\be
\label{expansionLag}
\mathcal{L}\lam(w) 
=\mathcal{L}\lam(\wh) %+ \intT [H_u\delta u + H_v\delta v]\dtt 
+ \Omega\lam(\delta x,\delta u,\delta v)  
+\omega\lam(\delta x,\delta u,\delta v)  +\R(\delta x,\delta u,\delta v),
\ee
where  $\omega$ is a cubic mapping given by
\begin{align*}
\omega\lam&(\delta x,\delta u,\delta v)  := \\
&\intT \left[ H_{vxx}\lam(\delta x, \delta x,\delta v) + 2H_{vux}\lam(\delta x,\delta u,\delta v) 
+ H_{vuu}\lam(\delta u,\delta u,\delta v) \right] \dtt,
\end{align*}
and $\R$ satisfies the estimate
\benl
\R(\delta x,\delta u,\delta v)  = L_\ell |(\delta x(0),\delta x(T))|^3
+L K (1+\|v\|_\infty)\, \|(\delta x, \delta u)\|_{\infty}\|(\delta x,\delta u) \|_2^2.
\eenl
Here $L_\ell$ is a Lipschitz constant for $D^2\ell\lam$ uniformly with respect to $\lambda\in \Lambda,$ $L$ is a Lipschitz constant for $D^2f_i$ uniformly in $i=0,\dots,m,$ and $K:=\ds\sup_{\lambda\in \Lambda} \|p\cd\|_\infty.$
\end{lemma}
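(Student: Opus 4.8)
The plan is to expand $\mathcal{L}[\lambda](w)$ around $\wh\cd$ by Taylor's formula and carefully separate the quadratic, cubic, and remainder terms. Recall from \eqref{lagrangian} that $\mathcal{L}[\lambda](w)$ splits into the endpoint Lagrangian $\ell[\lambda](x(0),x(T))$ and the integral term $\int_0^T p\,(F(x,u,v)-\dot x)\,\dtt$. Since $\delta w\cd=w\cd-\wh\cd$ and both $w\cd$ and $\wh\cd$ are trajectories, the linear part of the expansion vanishes by the stationarity \eqref{stationarity}, costate \eqref{costateeq} and transversality \eqref{transvcond} conditions; I would make this cancellation explicit first, as it is the reason no first-order term appears in \eqref{expansionLag}.

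First I would treat the endpoint term. A third-order Taylor expansion of $\ell[\lambda]$ at $(\xh(0),\xh(T))$ gives $\ell[\lambda](x(0),x(T))=\ell[\lambda](\xh(0),\xh(T))+D\ell[\lambda]\,(\delta x(0),\delta x(T))+\tfrac12 D^2\ell[\lambda]\,(\delta x(0),\delta x(T))^2+r_\ell$, where the first-order term matches the boundary contributions coming from integrating $-p\,\dot{\delta x}$ by parts together with \eqref{transvcond}, and the quadratic term is exactly the first summand of $\Omega[\lambda]$ in \eqref{Omega}. The cubic term of $\ell$ is absorbed into $\omega$ only if $\ell$ contributed cubic terms; since $\omega$ as written contains no endpoint cubic, the third-order endpoint contribution is instead bounded in the remainder, giving the term $L_\ell|(\delta x(0),\delta x(T))|^3$ via the Lipschitz continuity of $D^2\ell[\lambda]$ (Assumption \ref{regular}), uniformly in $\lambda$ over the compact set $\Lambda$ (Theorem \ref{LambdaCompact}).

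Next I would expand the integrand. Writing $G(x,u,v):=p\,F(x,u,v)=H[\lambda](x,u,v,t)$, I expand $H[\lambda]$ to third order in $(\delta x,\delta u,\delta v)$; because $F$ is \emph{affine} in $v$, $H[\lambda]$ is affine in $v$ too, so all purely-$v$ second and higher derivatives vanish and the only surviving third derivatives are those linear in $\delta v$, namely $H_{vxx}$, $H_{vux}$, $H_{vuu}$. This is precisely why $\omega$ has the stated form and why the full cubic of $H$ reduces to the three displayed terms. The quadratic terms reproduce the integral part of $\Omega[\lambda]$. The remaining fourth- and higher-order terms, together with the Lagrange form of the third-order Taylor remainder, are controlled using the Lipschitz bound $L$ on $D^2 f_i$: a factor $(1+\|v\|_\infty)$ appears because $F$, and hence the coefficients multiplying $f_i$ for $i\geq1$, carry the affine control $v$, and the estimate $\|(\delta x,\delta u)\|_\infty\|(\delta x,\delta u)\|_2^2$ comes from bounding one copy of $(\delta x,\delta u)$ in $L^\infty$ and the other two in $L^2$ inside the time integral, with $K=\sup_{\lambda}\|p\cd\|_\infty$ bounding the costate uniformly.

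The main obstacle, and the point requiring the most care, is the bookkeeping that shows the first-order terms cancel exactly and that the cubic terms involving $\delta x$ and $\delta u$ alone (with no $\delta v$) do \emph{not} appear in $\omega$ but are instead thrown into $\R$ with the claimed $L^2$--$L^\infty$ split. Concretely, one must verify that grouping the genuine $\omega$ cubic (those linear in $\delta v$, whose coefficients are bounded independently of $\|v\|_\infty$) separately from the higher/mixed terms yields exactly the stated remainder structure, and that every discarded term admits the factor $(1+\|v\|_\infty)$. The integration-by-parts step converting $\int_0^T(-p\,\dot{\delta x})\,\dtt$ into boundary terms plus $\int_0^T \dot p\,\delta x\,\dtt$, combined with \eqref{costateeq} to cancel against $H_x\,\delta x$, is routine but must be written carefully so the reader sees where the linearized dynamics \eqref{lineareq} are used; once this cancellation and the affine-in-$v$ structure are in place, the remaining estimates follow directly from Assumption \ref{regular} and the compactness of $\Lambda$.
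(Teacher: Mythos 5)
Your proposal is correct and follows essentially the same route as the paper: second-order Taylor expansions of $\ell$ and of the dynamics (equivalently, of $H[\lambda]$, exploiting its affineness in $v$ so that the only surviving cubic terms are those linear in $\delta v$), cancellation of the first-order terms via the transversality, costate and stationarity conditions after an integration by parts, and the $L^\infty$--$L^2$ H\"older split of the Lipschitz remainder with the uniform bound $K$ on $\|p\cd\|_\infty$. The only cosmetic difference is that the paper expands each $f_i(x,u)$ separately and then multiplies by $v_i$, whereas you expand $H[\lambda]$ directly; the resulting bookkeeping is identical.
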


\begin{proof}
 See Appendix \ref{proofexpansionlagrangian}.
\end{proof}

\begin{remark} From previous lemma one gets the identity
\benl 
\Omega\lam (\wb) = \half D^2\mathcal{L}\lam (\wh)\, \wb^2.
\eenl
\end{remark}

%%%%%%%%%%%%%%%%%%%%%%%%%%%%%%%%%%%%%%%%%%%%%%%%%%%%%
%%%%%%%%%%%%%%%%%%%%%%%%%%%%%%%%%%%%%%%%%%%%%%%%%%%%%

\subsection{Second order necessary condition}

The following result is a classical second order condition for weak minima.
\begin{theorem}[Second order necessary
condition]
\label{classicalNC}
 If $\wh\cd$ is a weak minimum of problem (P), then
\be \label{classicalNCeq}
\max_{\lambda\in \Lambda} \Omega \lam (\xb,\ub,\vb)
\geq 0,\ \mr{for}\ \mr{all}\ (\xb,\ub,\vb)\in\C.
\ee
\end{theorem}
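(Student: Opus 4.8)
The plan is to put problem (P), locally around $\wh\cd$, into the form of a mathematical program with finitely many $C^2$ constraints and then to invoke the classical second order necessary condition for such programs. First I would discard the control constraint \eqref{UV}: by Assumption \ref{uvboundary} the nominal controls stay at distance at least $\delta$ from the boundaries of $U$ and $V,$ so \eqref{UV} is inactive for every feasible trajectory lying in a weak $\eps$-neighbourhood of $\wh\cd$ with $\eps<\delta.$ Hence, on the neighbourhood relevant for weak optimality, the only effective constraints are the finitely many endpoint relations \eqref{finaleq}-\eqref{finalineq}.

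Next I would eliminate the state. Let $z:=(x_0,u,v)$ vary in the open set $\cR^n\times\U\times\V,$ let $x_z\cd$ solve the state equation \eqref{stateeq} with $x_z(0)=x_0,$ and write $x_T(z):=x_z(T).$ By Assumption \ref{regular} the flow depends on $z$ with a $C^2$ dependence having Lipschitz second derivative, and the corresponding second order Taylor expansion of the reduced Lagrangian $\hat{\mathcal L}\lam(z):=\ell\lam(x_0,x_T(z))$ is exactly the expansion furnished by Lemma \ref{expansionlagrangian} (observe that $\mathcal L\lam(w)=\hat{\mathcal L}\lam(z)$ whenever $w\cd$ is a trajectory, since then the integral term in \eqref{lagrangian} vanishes). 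Problem (P) then reads, locally,
\[
\min\ \varphi_0(x_0,x_T(z))\quad\text{s.t.}\quad \eta_j(x_0,x_T(z))=0,\ \ \varphi_i(x_0,x_T(z))\le 0,
\]
a program on the Banach space $Z:=\cR^n\times\U\times\V$ with $d_\eta$ equality and $d_\varphi$ inequality constraints of class $C^2.$ Through the linearized state equation \eqref{lineareq}-\eqref{lineareq0} its critical directions $\delta z=(\xb_0,\ub,\vb)$ are in one-to-one correspondence with the elements $\wb\cd$ of the critical cone $\C,$ and its generalized Lagrange multipliers $(\alpha,\beta)$—completed by the adjoint state $p\cd$ solving \eqref{costateeq}-\eqref{transvcond}-\eqref{stationarity}—are exactly the elements of $\Lambda.$

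I would then apply the classical second order necessary condition for finitely constrained $C^2$ programs at a local minimum: for every critical direction $\delta z,$
\[
\max_{\lambda\in\Lambda}\ D^2\hat{\mathcal L}\lam(\hat z)(\delta z,\delta z)\ \ge\ 0,\qquad \hat z:=(\xh(0),\uh,\vh).
\]
Since the image space of the constraints is finite dimensional and the ordering cone $\{0\}^{d_\eta}\times(-\infty,0]^{d_\varphi}$ is polyhedral, no second order tangent-set (``sigma'') term appears, so this clean maximum form is legitimate; the normalization \eqref{nontriv}-\eqref{alphapos} and the compactness from Theorem \ref{LambdaCompact} make the maximum attained and accommodate the possibly abnormal case $\alpha_0=0.$ Finally, using $\half D^2\hat{\mathcal L}\lam(\hat z)(\delta z,\delta z)=\Omega\lam(\wb)$—which is the content of Lemma \ref{expansionlagrangian} together with the remark $\Omega\lam(\wb)=\half D^2\mathcal L\lam(\wh)\,\wb^2$—the displayed inequality becomes precisely \eqref{classicalNCeq}.

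The routine ingredients here are the inactivity of the control constraint and the $C^2$-dependence of $x_T$ on $z$; the genuine obstacle is the abstract second order result itself, namely obtaining the maximum form over the whole, possibly non-unique and possibly abnormal, multiplier set $\Lambda$ \emph{without} assuming any constraint qualification. I would isolate this as the key step, leaning on the polyhedrality of the finite-dimensional ordering cone to discard the sigma-term and on the Fritz-John-type normalization \eqref{nontriv} (as in the framework of \cite{MilOsm98}) to keep $\Lambda$ nonempty and compact in the absence of regularity of the constraints.
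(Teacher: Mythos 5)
Your reduction is exactly the one the paper uses: Appendix \ref{AppendixclassicalNC} rewrites (P) as the abstract program \eqref{AP} on $\cR^n\times\U\times\V$ with the finitely many constraints $\bar\eta_j,\bar\varphi_i$, and the verification that the abstract multipliers, completed by the adjoint state, are exactly the elements of $\Lambda$ (via \eqref{DM5} and an integration by parts) and that the abstract critical directions correspond to $\C$ is carried out there just as you describe; the identification $\Omega\lam(\wb)=\half D^2\mathcal{L}\lam(\wh)\,\wb^2$ is also how the paper concludes. Where you differ is in how the key step is handled: you invoke the classical maximum-form second order necessary condition for finitely constrained $C^2$ programs as a black box, which is legitimate --- the paper itself states that the theorem ``can be found in Levitin, Milyutin and Osmolovskii \cite{LevMilOsm1985}'' and offers its appendix proof only for completeness --- whereas the appendix establishes that abstract result from scratch. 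Concretely, the paper splits on whether $D\bar\eta(\xh(0),\uh,\vh)$ is onto: in the unqualified case (Lemma \ref{degNC}) a nonzero $\beta$ annihilating $D\bar\eta$ produces two opposite multipliers $\pm\lambda$ with $p\cd\equiv 0$ and $\alpha=0$, one of which makes $\Omega$ nonnegative; in the qualified case it introduces, for each critical direction $\wb\cd$, the auxiliary linear program \eqref{QPw}, shows by a second order feasibility and implicit-function argument (Proposition \ref{dualpb}) that its value is nonnegative, and extracts the multiplier from the dual \eqref{dualQP1}--\eqref{dualQP3}. Your route buys brevity at the cost of an external reference carrying the whole weight of the ``no constraint qualification, possibly abnormal, maximum over all of $\Lambda$'' statement (you correctly flag the polyhedrality of the ordering cone as the reason no sigma-term appears); the paper's route buys a self-contained argument whose only imported ingredients are finite-dimensional LP duality and the implicit function theorem. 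Both are correct.
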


A proof of Theorem \ref{classicalNC} can be found in Levitin, Milyutin and Osmolovskii \cite{LevMilOsm1985}.
Nevertheless, for the sake of completeness, we give a proof in the Appendix \ref{AppendixclassicalNC}  that uses techniques of optimization in abstract spaces.

An extension of the condition \eqref{classicalNCeq} to the cone $\C_2$ can be easily proved and gives the following, stronger, second order condition.

\begin{theorem}
\label{NCC2}
 If $\wh\cd$ is a weak minimum of problem (P), then
\be 
\label{classicalNCeqC2}
\max_{\lambda\in \Lambda} \Omega \lam (\xb,\ub,\vb)
\geq 0,\quad \mr{for}\ \mr{all}\ (\xb,\ub,\vb)\in \C_2.
\ee
\end{theorem}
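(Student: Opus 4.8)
The plan is to deduce Theorem \ref{NCC2} from the weaker Theorem \ref{classicalNC} together with the density of $\C$ in $\C_2$ provided by Lemma \ref{conedense}. Fix an arbitrary $\wb\cd=(\xb,\ub,\vb)\cd\in\C_2$. By Lemma \ref{conedense} there is a sequence $\wb_k\cd\in\C$ with $\wb_k\cd\to\wb\cd$ in the $\W_2$-topology. Theorem \ref{classicalNC} gives $\max_{\lambda\in\Lambda}\Omega\lam(\wb_k)\geq0$ for every $k$, so it suffices to show that the marginal function $\Phi(\wb\cd):=\max_{\lambda\in\Lambda}\Omega\lam(\xb,\ub,\vb)$ is continuous on $\W_2$. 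Passing to the limit then yields $\Phi(\wb\cd)\geq0$, which is exactly \eqref{classicalNCeqC2}.

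To establish the continuity I would first check that, for each fixed $\lambda\in\Lambda$, the quadratic form $\wb\cd\mapsto\Omega\lam(\wb)$ is continuous on $\W_2$. The integral terms in \eqref{Omega} are bounded bilinear forms on $\X_2\times\U_2\times\V_2$ since all coefficients $H_{xx}\lam,\dots,H_{vu}\lam$ lie in $L^\infty$; the endpoint term $\half D^2\ell\lam(\xb(0),\xb(T))^2$ is continuous because the trace maps $\wb\cd\mapsto\xb(0)$ and $\wb\cd\mapsto\xb(T)$ are continuous from $\X_2=W^{1,2}(0,T;\cR^n)$ into $\cR^n$, owing to the one-dimensional Sobolev embedding $W^{1,2}(0,T)\hookrightarrow C([0,T])$.

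Next I would make this estimate uniform in $\lambda$. Since the coefficients $H_{\cdot\cdot}\lam$ and $D^2\ell\lam$ depend linearly on $(\alpha,\beta,p\cd)$, and by Theorem \ref{LambdaCompact} the set $\Lambda$ is compact—so that in particular $|\alpha|+|\beta|=1$ and $K=\sup_{\lambda\in\Lambda}\|p\cd\|_\infty<\infty$—there is a constant $C$, independent of $\lambda$, with
\benl
|\Omega\lam(\wb)-\Omega\lam(\wb')|\leq C\,\|\wb\cd-\wb'\cd\|_2\,(\|\wb\cd\|_2+\|\wb'\cd\|_2),\qquad \lambda\in\Lambda,\ \wb\cd,\wb'\cd\in\W_2.
\eenl
Taking the supremum over $\lambda$ on both sides gives $|\Phi(\wb\cd)-\Phi(\wb'\cd)|\leq\sup_{\lambda\in\Lambda}|\Omega\lam(\wb)-\Omega\lam(\wb')|\leq C\,\|\wb\cd-\wb'\cd\|_2(\|\wb\cd\|_2+\|\wb'\cd\|_2)$, so $\Phi$ is continuous on $\W_2$ (the maximum, as opposed to a mere supremum, being attained because $\Lambda$ is compact and $\lambda\mapsto\Omega\lam(\wb)$ is continuous, indeed affine). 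Applying this to $\wb_k\cd\to\wb\cd$ closes the argument.

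The step I expect to be the most delicate is transferring continuity from the individual forms $\Omega\lam$ to their pointwise maximum $\Phi$ over $\Lambda$. The clean route is precisely the uniform (equicontinuity) bound above: it reduces the matter to the elementary inequality $|\sup_\lambda g_\lambda(\wb)-\sup_\lambda g_\lambda(\wb')|\leq\sup_\lambda|g_\lambda(\wb)-g_\lambda(\wb')|$. The compactness of $\Lambda$ furnished by Theorem \ref{LambdaCompact} is what both supplies the uniform constant $C$ (through the finiteness of $K$) and turns the envelope into a genuine maximum. Everything else is a routine continuity check on bounded bilinear forms together with a single Sobolev trace estimate.
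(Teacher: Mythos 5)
Your argument is correct and is essentially the paper's own proof, which likewise rests on the three ingredients you use: the extendability/continuity of $\Omega\lam$ on $\W_2$ (from the essential boundedness of the coefficients), the density of $\C$ in $\C_2$ from Lemma \ref{conedense}, and the compactness of $\Lambda$ from Theorem \ref{LambdaCompact} to obtain the uniform bound and pass the inequality to the limit. You have simply written out in full the continuity and equicontinuity estimates that the paper leaves implicit.
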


\begin{proof}
Observe first that $\Omega\lam$ can be extended to the space $\W_2$ since all the coefficients are essentially bounded. The result follows by  the density property of Lemma \ref{conedense} and  the compactness of the Lagrange multipliers set $\Lambda$ proved in Theorem \ref{LambdaCompact}. 
\end{proof}

%%%%%%%%%%%%%%%%%%%%%
%%%%%%%%%%%%%%%%%%%%%

\subsection{Strengthened second order necessary condition}

In the sequel we aim at strengthening the necessary condition of Theorem \ref{NCC2} by proving that the maximum in \eqref{classicalNCeqC2} remains nonnegative when taken in a {\em possibly} smaller set of multipliers, whenever $\Lambda$ is convex.
%We shall first give a description of the subset of Lagrange multipliers we wish to work with.

Let ${\rm co}\, \Lambda$  denote  the {\em convex hull} of $\Lambda.$ Observe that if $\lambda=(\alpha,\beta,p(\cdot))$ is in $  {\rm co}\, \Lambda$ then it verifies \eqref{alphapos}-\eqref{stationarity} and, if $\wh(\cdot)$ is a weak minimum,  also the second order condition \eqref{classicalNCeqC2} is fulfilled for $\lambda.$
However, $\lambda$ may not verify the nontriviality condition \eqref{nontriv}, thus ${\rm co}\, \Lambda$ may content the trivial (i.e. identically zero) multiplier.

Set
\be\label{H2}
\H_2:=\{(\xb,\ub,\vb)\cd\in\W_2:\eqref{lineareq}\ \text{holds}\},
\ee
and consider the subset of ${\rm co}\, \Lambda$ given by
\benl 
({\rm co}\, \Lambda)^{\#}:=\{ \lambda\in {\rm co}\, \Lambda: \Omega\lam\
\text{is weakly-l.s.c. on }\H_2\}.
\eenl
Next we prove that $({\rm co}\, \Lambda)^{\#}$ can be characterized in a quite simple way (see Lemma \ref{Lambdawlsc} below). Theorem \ref{strengthNC} stated afterwards yields a {\em new}  necessary optimality condition. 

\begin{lemma}
\label{Lambdawlsc}
\be 
({\rm co}\, \Lambda)^{\#} =\{ \lambda\in {\rm co}\, \Lambda: H_{uu}
\lam\succeq0\ {\rm and}\ H_{vu}\lam = 0,\,\, {\rm a.e.}\, {\rm on}\,[0,T]\}.
\ee
\end{lemma}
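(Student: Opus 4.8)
The plan is to split $\Omega\lam$ into a part that is weakly sequentially continuous on $\H_2$ and a purely control-quadratic part whose weak lower semicontinuity is governed by a pointwise positivity condition, and then to read off the characterization by elementary linear algebra. First I would write, for $\wb\cd=(\xb,\ub,\vb)\cd\in\H_2$, the decomposition $\Omega\lam(\wb)=\Omega_1(\wb)+Q(\ub,\vb)$, where $Q(\ub,\vb):=\intT\big(\half\ub\tras H_{uu}\lam\ub+\vb\tras H_{vu}\lam\ub\big)\dtt$ collects the two terms not involving $\xb$, and $\Omega_1$ gathers the endpoint term $\half D^2\ell\lam(\xb(0),\xb(T))^2$ together with $\half\xb\tras H_{xx}\lam\xb$, $\ub\tras H_{ux}\lam\xb$ and $\vb\tras H_{vx}\lam\xb$. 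I would show that $\Omega_1$ is weakly continuous on $\W_2$: if $\wb_k\rightharpoonup\wb$ weakly in $\W_2$, then $\xb_k\rightharpoonup\xb$ in $\X_2=W^{1,2}$, and by the compact embedding $W^{1,2}(0,T)\hookrightarrow C([0,T])$ one has $\xb_k\to\xb$ strongly in $C([0,T])$, hence in $L^2$ and at the two endpoints. Since all coefficient matrices are essentially bounded (Assumption \ref{regular} together with $\|p\cd\|_\infty\le K$), every term of $\Omega_1$ is a product of a strongly convergent state factor with either another strongly convergent state factor or a weakly convergent control factor, so it converges. Therefore $\Omega\lam$ is weakly-l.s.c. on $\H_2$ if and only if $Q$ is.

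Next, since $\H_2$ is a linear subspace and the map $\wb\mapsto(\ub,\vb)$ sends $\H_2$ onto all of $\U_2\times\V_2$ (any $(\ub,\vb)$ being completed by the solution of \eqref{lineareq} with, say, $\xb(0)=0$, whose solution operator is weak-to-weak continuous), the weak lower semicontinuity of $Q$ on $\H_2$ is equivalent to that of $Q$ on $\U_2\times\V_2$. Writing $z:=(\ub,\vb)\tras$ and $M\lam(t):=\left(\begin{smallmatrix} H_{uu}\lam(t) & H_{uv}\lam(t)\\ H_{vu}\lam(t) & 0\end{smallmatrix}\right)$, where the lower-right block vanishes because $H$ is affine in $v$ (so $H_{vv}\lam\equiv0$), we have $Q=\intT\half z\tras M\lam z\,\dtt$. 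Here I would invoke the classical characterization of weak lower semicontinuity of integral quadratic forms on $L^2$: $Q$ is weakly-l.s.c. if and only if $M\lam(t)\succeq0$ for a.e. $t$. The easy implication is that $M\lam(t)\succeq0$ a.e. makes $Q$ convex and strongly continuous, hence weakly-l.s.c.; the converse follows from a standard oscillation (Riemann--Lebesgue) argument: if $z_0\tras M\lam(t)z_0<0$ on a set of positive measure, one builds a rapidly oscillating sequence $z_k\rightharpoonup0$ along which $Q(z_k)$ stays bounded below $0$, contradicting lower semicontinuity at the origin.

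Finally I would carry out the pointwise linear algebra. For a symmetric block matrix $\left(\begin{smallmatrix}A&B\\B\tras&0\end{smallmatrix}\right)$, positive semidefiniteness holds if and only if $B=0$ and $A\succeq0$: for fixed $a$ the form $(a,b)\mapsto a\tras Aa+2\,a\tras Bb$ is affine in $b$ and bounded below only when $B\tras a=0$, and choosing $b=-sB\tras a$ with $s\to+\infty$ drives it to $-\infty$ unless $B\tras a=0$; letting $a$ vary forces $B=0$, after which positivity reduces to $A\succeq0$. Applied with $A=H_{uu}\lam(t)$ and $B=H_{uv}\lam(t)$, this yields $M\lam(t)\succeq0$ a.e. if and only if $H_{uu}\lam\succeq0$ and $H_{uv}\lam=0$ (equivalently $H_{vu}\lam=0$) a.e. on $[0,T]$, which is exactly the asserted description of $({\rm co}\,\Lambda)^{\#}$.

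The main obstacle is the second step: the equivalence between weak lower semicontinuity of the quadratic integrand and pointwise positivity of $M\lam$. The delicate point is the converse, which requires a measurable selection of a bad direction $z_0(\cdot)$ on the set where $M\lam$ fails to be positive semidefinite and a careful oscillation construction with its attendant measurability bookkeeping when passing from the pointwise to the integral statement. The first and third steps are routine once one uses the essential boundedness of the coefficients and the Sobolev compactness that upgrades weak convergence of the state to strong convergence.
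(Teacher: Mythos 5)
Your proposal is correct and follows essentially the same route as the paper: the identical splitting of $\Omega\lam$ into the weakly continuous part \eqref{Omega1} and the control-quadratic part \eqref{Omega2}, reduction to pointwise positive semidefiniteness of the block matrix \eqref{Huvuv} (which the paper obtains by citing Hestenes \cite[Theorem 3.2]{Hes51} as Lemma \ref{Lemmawlsc}, whereas you sketch the convexity/oscillation argument directly), and the elementary observation that $\left(\begin{smallmatrix}A&B\\B\tras&0\end{smallmatrix}\right)\succeq0$ forces $B=0$ and $A\succeq0$. No gaps.
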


\begin{remark}[About {\em singular} solutions]
\label{RemSing}
From now on we restrict the set $({\rm co}\, \Lambda)^{\#}$ or some subset of it and, therefore, $H_{uv}\lam\equiv 0$ along the nominal trajectory $\wh\cd.$ Consequently, 
\benl
D^2_{(u,v)^2}H\lam (\wh,t)\ \mr{is}\ \mr{a}\ \mr{singular}\ \mr{matrix}\ \mr{a.e.\,\, on}\  [0,T].
\eenl
The latter assertion together with the stationarity condition \eqref{stationarity} imply that $(\wh,\lambda)$ is a {\em singular extremal} (as defined in Bryson-Ho \cite[Page 246]{BryHo}). That is, if we write $\nu:=(u,v)$ for the control, we say that $(\wh, \lambda)$ is a singular extremal if $H_\nu[\lambda]=0$ and $H_{\nu\nu}[\lambda]$ is singular a.e. on $[0,T]$.

Let us comment on the terminology used in the literature for the class of problems where $H_{\nu\nu}$ is a singular matrix.
In Bell-Jacobson \cite[Definition 1.2]{BelJac}  and Ruxton-Bell \cite{RuxtonBell1995} they refer to singular extremals (as defined above) as {\em totally singular}, while they use the term {\em partially singular} to refer to controls for which $H_{\nu}=0$ only on some subintervals of $[0,T],$ which is not the class of controls studied here. The same definition is adopted in Poggiolini and Stefani \cite{PogioliniStefani2007}.  On the other hand, O'Malley in \cite{OMalley1977} calls {\em partially singular} the linear-quadratic problems in which the matrix $H_{\nu\nu}$ is (singular but) not of constant non-zero rank, that is a framework included in our class of problems.
\end{remark}

In order to prove  Lemma \ref{Lambdawlsc} we shall notice that $\Omega\lam$ can be written as the sum of two maps: the first one being a weakly-continuous function on the space $\H_2$ given by
\be\label{Omega1}
(\xb,\ub,\vb)\mapsto\half D^2\ell\lam(\xb(0),\xb(T))^2  + \intT \Big( \half\xb\tras H_{xx}\lam \xb + \ub\tras H_{ux}\lam\xb +
\vb\tras H_{vx}\lam\xb \Big) \dtt,
\ee
 and the second one being the quadratic operator
\be\label{Omega2}
(\ub,\vb)\mapsto 
\intT \Big( \half\ub\tras H_{uu} \lam\ub + \vb\tras
H_{vu}\lam\ub \Big) \dtt.
\ee
The weak-continuity of the mapping in \eqref{Omega1} follows easily. Additionally, in view of Hestenes \cite[Theorem 3.2]{Hes51}, the following characterization holds.

\begin{lemma}
\label{Lemmawlsc}
The mapping in \eqref{Omega2} is weakly-lower semicontinuous on $\U\times \V$ if and only if the matrix
\be 
\label{Huvuv}
D^2_{(u,v)^2}H\lam=\begin{pmatrix}
H_{uu}\lam & H_{vu}\lam\tras \\
 H_{vu}\lam & 0\\
\end{pmatrix},
\ee
is positive semidefinite almost everywhere on $[0,T].$
\end{lemma}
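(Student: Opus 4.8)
The plan is to recognize the operator in \eqref{Omega2} as a purely quadratic functional of the control perturbation whose pointwise density is governed by the symmetric matrix \eqref{Huvuv}, and then to reduce the statement to the classical characterization of weak lower semicontinuity of such functionals due to Hestenes \cite[Theorem 3.2]{Hes51}. Throughout, weak lower semicontinuity is understood for the weak topology of $L^2$, consistently with the use of $\H_2\subset\W_2$ above; the operator, a priori defined on $\U\times\V$, extends to $\U_2\times\V_2$ precisely because its coefficients are essentially bounded.

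First I would assemble the two control variations into a single function $z\cd:=(\ub,\vb)\cd\in\U_2\times\V_2$ and rewrite the integrand of \eqref{Omega2}. Since $\ub\tras H_{vu}\lam\tras\vb=\vb\tras H_{vu}\lam\ub$, a direct computation gives
\[
\half\,\ub\tras H_{uu}\lam\,\ub+\vb\tras H_{vu}\lam\,\ub
=\half\,z\tras\big(D^2_{(u,v)^2}H\lam\big)\,z,
\]
so that the mapping in \eqref{Omega2} equals $z\cd\mapsto\half\intT z\tras R(t)\,z\,\dtt$ with $R(t):=D^2_{(u,v)^2}H\lam(t)$ the symmetric matrix of \eqref{Huvuv}. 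Assumption \ref{regular} makes the second derivatives of the $f_i$ bounded along $\wh\cd$, and Theorem \ref{LambdaCompact} guarantees that $p\cd$ is essentially bounded; hence $R\cd\in L^\infty(0,T)$ and the functional is well-defined and strongly continuous on $\W_2$.

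It then remains to invoke the equivalence: the quadratic functional $z\cd\mapsto\intT z\tras R\,z\,\dtt$, with $R\cd$ symmetric and essentially bounded, is weakly-lower semicontinuous on $L^2$ if and only if $R(t)\succeq0$ for a.e.\ $t\in[0,T]$. The \emph{sufficiency} is the soft direction: if $R(t)\succeq0$ a.e., the functional is a nonnegative, hence convex, strongly continuous quadratic form on $L^2$, and convexity together with strong continuity yields weak lower semicontinuity in the standard way.

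The delicate part, and the step I expect to be the main obstacle, is the \emph{necessity}. Assuming $R$ fails to be positive semidefinite on a set of positive measure, one must produce a sequence $z_n\cd$ converging weakly to $0$ along which the functional stays bounded above by a strictly negative number, contradicting weak lower semicontinuity since the value at the weak limit is $0$. The construction is the classical oscillation argument: choose a Lebesgue point $t_0$ and a vector $z_0$ with $z_0\tras R(t_0)z_0<0$, and take perturbations $z_n(t)=z_0\,\varphi_n(t)$ supported near $t_0$, with $\varphi_n\rightharpoonup0$ in $L^2$ but $\intT\varphi_n^2\,\dtt$ bounded below, so that a Riemann--Lebesgue type estimate drives the functional towards a negative limit. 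This is exactly the content of Hestenes \cite[Theorem 3.2]{Hes51}, which I would cite rather than reproduce in detail.
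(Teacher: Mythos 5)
Your proposal is correct and follows essentially the same route as the paper, which likewise reduces \eqref{Omega2} to the quadratic form $z\mapsto\half\intT z\tras D^2_{(u,v)^2}H\lam\, z\,\dtt$ and invokes Hestenes \cite[Theorem 3.2]{Hes51} for the equivalence with a.e.\ positive semidefiniteness. Your additional sketch of the two directions (convexity plus strong continuity for sufficiency, the localized oscillation argument at a Lebesgue point for necessity) is accurate and merely fills in what the paper delegates entirely to the citation.
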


\begin{remark}
\label{remarkLC}
The fact that the matrix in \eqref{Huvuv} is positive semidefinite is known as the {\it Legendre-Clebsch necessary optimality condition} for the extremal $(\wh,\lambda)$ (see e.g. Bliss \cite{Bliss1946} in the framework of Calculus of Variations, and Bryson-Ho \cite{BryHo}, Agrachev-Sachkov \cite{AgrSac} or Corollary \ref{NCunique} below for Optimal Control). 
\end{remark}

We can now prove Lemma \ref{Lambdawlsc}.

\noindent{\em Proof of Lemma \ref{Lambdawlsc}.}
It follows from the decomposition given in \eqref{Omega1}-\eqref{Omega2} and the characterization of weak-lower semicontinuity stated in previous Lemma \ref{Lemmawlsc}.
\findem

\begin{theorem}[Strengthened second order necessary condition]
\label{strengthNC}
 If $\wh\cd$ is a weak minimum of problem (P), then
\be \label{strengthNCeq}
\max_{\lambda\in ({\rm co}\, \Lambda)^{\#}} \Omega \lam
(\xb,\ub,\vb) \geq 0,\quad \mr{on}\ \C_2.
\ee
\end{theorem}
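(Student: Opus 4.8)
The plan is to derive \eqref{strengthNCeq} from Theorem \ref{NCC2} in two moves: pass from $\Lambda$ to its convex hull at no cost, then cut the convex hull down to $({\rm co}\,\Lambda)^{\#}$ by testing the condition against rapidly oscillating control perturbations.

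First I would note that, for each fixed $\wb=(\xb,\ub,\vb)\cd\in\C_2$, the map $\lambda\mapsto\Omega\lam(\wb)$ is affine: every coefficient $D^2\ell\lam$, $H_{xx}\lam$, $H_{ux}\lam$, $H_{vx}\lam$, $H_{uu}\lam$, $H_{vu}\lam$ is linear in $\lambda=(\alpha,\beta,p\cd)$, and $p\cd$ depends linearly on $(\alpha,\beta)$ through the costate equation \eqref{costateeq}. Since the maximum of an affine functional over a set coincides with its maximum over the convex hull, Theorem \ref{NCC2} gives $\max_{\lambda\in{\rm co}\,\Lambda}\Omega\lam(\wb)\geq0$ on $\C_2$. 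Moreover ${\rm co}\,\Lambda$ is convex and compact (the convex hull of the compact $\Lambda$ in a finite-dimensional space), and by Lemma \ref{Lambdawlsc} its subset $({\rm co}\,\Lambda)^{\#}$, cut out by the convex closed conditions $H_{uu}\lam\succeq0$ and $H_{vu}\lam=0$, is itself convex and compact; in particular the maxima below are attained.

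The mechanism for the reduction rests on the splitting $\Omega\lam=\Omega_1\lam+\Omega_2\lam$ of \eqref{Omega1}--\eqref{Omega2}, where $\Omega_1\lam$ is weakly continuous on $\H_2$ and $\Omega_2\lam$ depends only on the controls. Given $\wb\in\C_2$, I would perturb it by high-frequency oscillations $(\delta u^k,\delta v^k)\rightharpoonup0$ together with the corrector $\delta x^k$ solving \eqref{lineareq}--\eqref{lineareq0} with forcing $F_u\,\delta u^k+F_v\,\delta v^k$; as this forcing oscillates fast, $\delta x^k\to0$ uniformly, so after an $o(1)$ adjustment restoring \eqref{linearconseq}--\eqref{linearconsineq} the perturbed directions $\wb^k$ lie in $\C_2$ and satisfy $\wb^k\rightharpoonup\wb$. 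Along such sequences $\Omega_1\lam(\wb^k)\to\Omega_1\lam(\wb)$ (uniformly in $\lambda\in{\rm co}\,\Lambda$), whereas the surviving part of $\Omega_2\lam(\wb^k)$ is the quadratic form of the matrix $D^2_{(u,v)^2}H\lam$ of \eqref{Huvuv} evaluated on $(\delta u^k,\delta v^k)$. By Lemmas \ref{Lemmawlsc}--\ref{Lambdawlsc} this contribution is nonnegative precisely when $\lambda\in({\rm co}\,\Lambda)^{\#}$ and can be made arbitrarily negative, by amplifying the oscillation, precisely when $\lambda\notin({\rm co}\,\Lambda)^{\#}$. The zero $vv$-block of \eqref{Huvuv} is what makes this useful: for a good multiplier a pure $\delta v^k$-oscillation, or a $\delta u^k$-oscillation in the kernel of $H_{uu}\lam$ coupled with a large $\delta v^k$, contributes nothing, while for a multiplier with $H_{vu}\lam\neq0$ the same coupling lowers $\Omega\lam$.

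I would conclude as a minimax statement. Writing $\delta\nu$ for an admissible oscillatory perturbation as above, the infimum $\inf_{\delta\nu}\Omega\lam(\wb+\delta\nu)$ realizes the weak-lower envelope of $\Omega\lam$, equal to $\Omega\lam(\wb)$ on $({\rm co}\,\Lambda)^{\#}$ and to $-\infty$ elsewhere; hence the left-hand side of \eqref{strengthNCeq} equals $\max_{\lambda\in{\rm co}\,\Lambda}\inf_{\delta\nu}\Omega\lam(\wb+\delta\nu)$. Since $\lambda\mapsto\Omega\lam(\wb+\delta\nu)$ is affine and continuous on the convex compact set ${\rm co}\,\Lambda$, a minimax theorem would exchange $\max$ and $\inf$, turning this into $\inf_{\delta\nu}\max_{\lambda\in{\rm co}\,\Lambda}\Omega\lam(\wb+\delta\nu)$, which is $\geq0$ because every inner maximum is nonnegative by the first move. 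Equivalently, and more concretely, one argues by contradiction: if \eqref{strengthNCeq} failed, then $\Omega\lam(\wb)\leq-2c<0$ for all $\lambda\in({\rm co}\,\Lambda)^{\#}$, and one seeks a single admissible $\wb'\in\C_2$ with $\Omega\lam(\wb')<0$ for every $\lambda\in{\rm co}\,\Lambda$ at once, contradicting the first move. I expect the main obstacle to be exactly this simultaneity: one perturbation must be harmless for all multipliers in $({\rm co}\,\Lambda)^{\#}$ and deleterious for all those outside, even though distinct bad multipliers carry distinct negative directions and a bad multiplier may start with $\Omega\lam(\wb)$ large and positive. Handling it is where the block structure of \eqref{Huvuv}, the compactness of ${\rm co}\,\Lambda$ (to reduce to finitely many oscillation profiles and calibrate their amplitudes against the margin $c$), and the justification of the minimax equality despite the nonconvexity of $\delta\nu\mapsto\Omega\lam(\wb+\delta\nu)$ all enter; the remaining point, namely keeping $\wb'$ inside $\C_2$ after the oscillations are added, is a more routine correction.
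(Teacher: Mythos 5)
Your opening move is correct and is indeed part of the intended argument: since $\lambda\mapsto\Omega\lam(\wb)$ is affine (with $p\cd$ linear in $(\alpha,\beta)$), Theorem \ref{NCC2} upgrades for free to $\max_{\lambda\in{\rm co}\,\Lambda}\Omega\lam(\wb)\geq0$ on $\C_2$, and Lemma \ref{Lambdawlsc} identifies $({\rm co}\,\Lambda)^{\#}$ concretely. But the heart of the theorem --- cutting the maximum down from ${\rm co}\,\Lambda$ to $({\rm co}\,\Lambda)^{\#}$ --- is exactly the content of Lemma \ref{quadform} (Dmitruk's theorem on families of quadratic forms), which the paper simply invokes with $H=\H_2$, $K=\C_2$ (a finite-faced cone in $\H_2$, the equalities \eqref{linearconseq} being pairs of inequalities), $M={\rm co}\,\Lambda$ and $Q^{\psi}=\Omega\lam$. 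You instead attempt to rederive this reduction by an oscillation-plus-minimax argument, and that attempt does not close. The minimax exchange you propose is unjustified: $\delta\nu\mapsto\Omega\lam(\wb+\delta\nu)$ is an indefinite quadratic, not convex, so Sion/von Neumann type theorems do not apply, and you say so yourself. More fundamentally, the ``simultaneity'' issue you flag --- producing one admissible perturbation that leaves $\Omega\lam$ essentially unchanged for every $\lambda\in({\rm co}\,\Lambda)^{\#}$ while driving $\Omega\lam$ negative for every $\lambda$ outside it, when distinct bad multipliers require distinct oscillation profiles and may start from large positive values --- is precisely the nontrivial part of Dmitruk's lemma. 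Naming the obstacle is not the same as overcoming it; as written, the contradiction argument in your last paragraph never exhibits the required $\wb'$.

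The fix is short: do not reprove the reduction. Apply Lemma \ref{quadform} directly to the family $\{\Omega\lam:\lambda\in{\rm co}\,\Lambda\}$ on the Hilbert space $\H_2$ with the finite-faced cone $\C_2$, note that $M^{\#}$ in that lemma is exactly $({\rm co}\,\Lambda)^{\#}$ by Lemma \ref{Lambdawlsc} (via the decomposition \eqref{Omega1}--\eqref{Omega2} and Lemma \ref{Lemmawlsc}), and combine with your first move. That is the paper's entire proof. If you do want a self-contained argument, you would in effect have to reprove Dmitruk's result, whose actual proof is not a minimax but a finite-dimensional reduction over the compact multiplier set combined with a careful construction of the perturbation; your sketch omits that construction.
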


\begin{remark}[On {\em unqualified} solutions]
Notice that it may occur that $0\in ({\rm co}\, \Lambda)^{\#}$ and, in this case, the second order condition in Theorem \ref{strengthNC} above does not provide any information. 
This situation may arise when the endpoint constraints are {\it not qualified,} in the sense of the {\em constraint qualification condition \eqref{QC}} introduced in the Appendix, which is a natural generalization of the {\em Mangasarian-Fromovitz} condition \cite{ManFro67} to the infinite-dimensional framework.
\end{remark}

In order to achieve Theorem \ref{strengthNC}, let us recall the following result on quadratic forms (taken from Dmitruk \cite[Theorem 5]{Dmi84}).

\begin{lemma}
\label{quadform}
Given a Hilbert space $H,$ and
$a_1,a_2,\hdots,a_p$ in $H,$ set
\be
K:=\{x\in H:(a_i,x)\leq 0,\ \mr{for}\
i=1,\hdots,p\}.
\ee
Let $M$ be a convex and compact subset of $\cR^s,$
and let
$\{Q^{\psi}:\psi\in M\}$ be a family of continuous
quadratic forms over $H,$ the mapping $\psi
\rightarrow
Q^{\psi}$ being affine. Set 
$M^{\#}:=\{ \psi \in M:\ Q^{\psi}\ \text{is
weakly-l.s.c.}\text{ on } H\}$ and assume that
\be
\max_{\psi\in M} Q^{\psi}(x)\geq 0,\ \mr{for}\
\mr{all}\ x\in K.
\ee
Then
\be
\max_{\psi\in
M^{\#}} Q^{\psi}(x)\geq 0,\ \mr{for}\ \mr{all}\ x\in K.
\ee
\end{lemma}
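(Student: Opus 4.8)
The plan is to argue by contradiction and to reduce the infinite-dimensional statement to a finite-dimensional convex-geometry fact in the parameter space $\cR^s$. Throughout I write $Q^\psi(x)=(A_\psi x,x)$ with $A_\psi$ bounded self-adjoint, and I use repeatedly that $\psi\mapsto A_\psi$ is affine and norm-continuous, and that, by the Hestenes-type characterization behind Lemma \ref{Lemmawlsc}, $Q^\psi$ is weakly-l.s.c. on $H$ if and only if the essential spectrum of $A_\psi$ is contained in $[0,\infty)$. I first record that $M^{\#}$ is convex (the affine preimage of the convex cone of weakly-l.s.c. forms) and closed, hence compact, since weak lower semicontinuity survives norm-limits of the uniformly bounded forms $Q^{\psi_n}$. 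Suppose the conclusion fails: there is $\bar x\in K$ with $Q^\psi(\bar x)\leq -2\delta<0$ for every $\psi\in M^{\#}$.

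The key reduction is to evaluate the forms along a single weakly-null orthonormal sequence inside the subspace $W:=\{y\in H:(a_i,y)=0,\ i=1,\dots,p\}$, which has finite codimension and satisfies $\bar x+y\in K$ for all $y\in W$. Take $e_k\in W$, $e_k\rightharpoonup 0$, $\|e_k\|=1$. Since $A_\psi=A_0+\sum_j\psi_j A_j$, after passing to a subsequence along which each scalar sequence $(A_j e_k,e_k)$ converges, one obtains $Q^\psi(e_k)\to\Lambda_v(\psi)$, where $\Lambda_v$ is an \emph{affine} function of $\psi$ and the convergence is uniform on the compact set $M$. Because the essential spectrum is invariant under the finite-codimension compression to $W$, the affine functions $\Lambda_v$ obtained from all such sequences exactly detect weak lower semicontinuity, so that $\psi\in M^{\#}$ if and only if $\Lambda_v(\psi)\geq 0$ for every achievable $v$. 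A gliding-hump construction — juxtaposing mutually near-orthogonal high-index tails of several such sequences — further shows that the achievable affine functions are stable under convex combinations.

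Next I transport the hypothesis to these affine functions. For such an $e_k$ and $t\geq 0$ the points $\bar x+t\,e_k$ lie in $K$, while the cross terms $2t(A_\psi\bar x,e_k)\to 0$ uniformly on $M$ (as $e_k\rightharpoonup 0$ and $\psi\mapsto A_\psi\bar x$ is norm-continuous on the compact $M$). Passing to the limit in the assumed inequality $\max_{\psi\in M}Q^\psi(\bar x+t\,e_k)\geq 0$ and using uniform convergence to interchange limit and maximum yields
\be
\label{eq:reduced}
\max_{\psi\in M}\big[\,Q^\psi(\bar x)+t^2\,\Lambda_v(\psi)\,\big]\geq 0,\qquad \text{for all } t\geq 0 \text{ and all achievable } v.
\ee
Writing $q(\psi):=Q^\psi(\bar x)$, this is now a purely finite-dimensional statement about the affine functions $q$ and $\{\Lambda_v\}$ over the convex compact set $M$, with $M^{\#}=M\cap\{\psi:\Lambda_v(\psi)\geq 0 \text{ for all achievable } v\}$ and the contradiction hypothesis reading $\max_{\psi\in M^{\#}}q(\psi)\leq-2\delta<0$.

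To close the argument I apply convex (Lagrangian) duality to this finite-dimensional problem, using Helly/Carath\'eodory to keep finitely many active constraints; this produces nonnegative multipliers $\tau_v$ with $\max_{\psi\in M}[\,q(\psi)+\sum_v\tau_v\Lambda_v(\psi)\,]<0$. Realizing the normalized combination as a single achievable $\Lambda_{v_*}$ through the gliding-hump step and taking $t^2=\sum_v\tau_v$ contradicts \eqref{eq:reduced}, which proves the claim. I expect the main obstacle to be the reduction of the second paragraph: isolating one weakly-null sequence whose quadratic values converge to an affine function of $\psi$, and verifying that the resulting affine family truly characterizes $M^{\#}$ — this rests on the equivalence between weak lower semicontinuity and nonnegativity of the essential spectrum, together with the invariance of that spectrum under finite-codimension compression to $W$. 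Once this reduction and the convex-combination stability are established, the passage to \eqref{eq:reduced} and the finite-dimensional duality step are comparatively routine.
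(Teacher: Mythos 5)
The paper offers no proof of Lemma \ref{quadform}: it is quoted directly from Dmitruk \cite[Theorem 5]{Dmi84}, so there is no in-paper argument to compare yours against. Judged on its own terms, your plan is a sound reconstruction of the kind of argument behind that theorem, and its main pillars are all correct: for a bounded self-adjoint $A_\psi$, weak lower semicontinuity of $(A_\psi x,x)$ is indeed equivalent to $\liminf (A_\psi h_k,h_k)\ge 0$ along weakly null sequences, equivalently to $\sigma_{ess}(A_\psi)\subset[0,\infty)$, equivalently to $A_\psi$ being a positive operator plus a compact one; this is preserved under compression to the finite-codimension subspace $W$ because $P_{W^\perp}$ is finite rank; the limits $\Lambda_v(\psi)$ are affine in $\psi$ and the convergence is uniform on the bounded set $M$; and weak convergence of $e_k$ kills the cross term $2t(A_\psi\bar x,e_k)$ uniformly over the compact family $\{A_\psi\bar x\}$. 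The gliding-hump convexification of the achievable set and the passage to the limit in the hypothesis both work as you describe.

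Two points need to be handled explicitly rather than left implicit. First, the case $M^{\#}=\emptyset$: your contradiction hypothesis ``$Q^\psi(\bar x)\le -2\delta$ on $M^{\#}$'' is then vacuous, and, relatedly, the finite-dimensional minimax may return a certificate in which the multiplier of $q$ is zero. In that situation you do not get $\max_M[q+\sum_v\tau_v\Lambda_v]<0$ but rather a convex combination $\Lambda_{v_*}$ with $\max_M\Lambda_{v_*}<0$; the contradiction then comes from testing the original hypothesis on $t\,e_k\in K$ (note $W\subset K$) and letting $t\to\infty$, which forces $\max_M\Lambda_{v}\ge 0$ for every achievable $v$. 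The same tools close this case, but as written your duality step silently assumes the $q$-multiplier is positive. Second, when the $q$-multiplier is positive but all $\tau_v$ vanish, the contradiction is already $\max_M Q^\psi(\bar x)<0$ against the hypothesis at $t=0$; worth a line. With these case splits spelled out, and the cross-term estimates in the gliding-hump step written down, I see no gap: the skeleton is correct and is essentially the one in the cited source.
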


We are now able to show Theorem \ref{strengthNC} as desired.

\noindent{\em Proof of Theorem \ref{strengthNC}.}
It is a consequence of Theorem \ref{NCC2}, Lemmas \ref{Lambdawlsc} and \ref{quadform}. 

\findem

We finish this section with the following extension of the classical second order pointwise Legendre-Clebsch condition, which follows as a corollary of Theorem \ref{strengthNC}.

\begin{corollary}[Legendre-Clebsch condition]
\label{NCunique}
If $\wh\cd$ is a weak minimum of (P) with a unique associated Lagrange multiplier $\hat\lambda,$ then $(\wh,\hat\lambda)$ satisfies the {\em Legendre-Clebsch condition}, this is, the matrix in \eqref{Huvuv} is positive semidefinite and, consequently,
\be
\label{R0pos}
 H_{uu} \lamh\succeq0\ {\rm and}\ H_{vu}\lamh \equiv 0.
\ee
\end{corollary}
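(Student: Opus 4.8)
The plan is to deduce the result directly from the strengthened necessary condition of Theorem \ref{strengthNC}, combined with the characterization of $({\rm co}\, \Lambda)^{\#}$ furnished by Lemma \ref{Lambdawlsc}. The crucial observation is that uniqueness of the multiplier collapses the whole family of multipliers to a single point: since $\Lambda=\{\hat\lambda\}$ is a singleton, its convex hull satisfies ${\rm co}\, \Lambda=\{\hat\lambda\}$ as well, and consequently $({\rm co}\, \Lambda)^{\#}\subseteq\{\hat\lambda\}$.

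First I would argue that $({\rm co}\, \Lambda)^{\#}$ is nonempty. Taking the trivial critical direction $(\xb,\ub,\vb)=0\in\C_2$ in \eqref{strengthNCeq} (recall that $0$ always lies in $\C_2$, as it satisfies the homogeneous linearized equations and constraints), one has $\Omega\lam(0)=0$ for every $\lambda$, so the inequality reads $\max_{\lambda\in({\rm co}\, \Lambda)^{\#}}0\geq0$. Under the usual convention that the maximum over an empty index set equals $-\infty$, this forces $({\rm co}\, \Lambda)^{\#}\neq\emptyset$. Together with the inclusion $({\rm co}\, \Lambda)^{\#}\subseteq\{\hat\lambda\}$, I conclude that $\hat\lambda\in({\rm co}\, \Lambda)^{\#}$.

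It then remains to translate this membership into the Legendre-Clebsch condition. By Lemma \ref{Lambdawlsc}, belonging to $({\rm co}\, \Lambda)^{\#}$ is equivalent to $H_{uu}\lamh\succeq0$ and $H_{vu}\lamh=0$ almost everywhere on $[0,T]$, which is exactly \eqref{R0pos}. Finally, the positive semidefiniteness of the matrix in \eqref{Huvuv} follows at once: with $H_{vu}\lamh=0$ the matrix becomes block-diagonal with blocks $H_{uu}\lamh\succeq0$ and $0$, hence positive semidefinite; this is also precisely the content of Lemma \ref{Lemmawlsc} applied to the single multiplier $\hat\lambda$.

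The only delicate point is the nonemptiness argument of the second step, i.e. the appeal to the max-over-empty-set convention implicit in Theorem \ref{strengthNC}; once one agrees that the assertion of that theorem presupposes a nonempty index set of multipliers, the remainder is an immediate substitution into Lemma \ref{Lambdawlsc}. No further estimates or constructions are required.
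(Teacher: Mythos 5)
Your proof is correct and follows essentially the same route as the paper's: uniqueness gives ${\rm co}\,\Lambda=\{\hat\lambda\}$, the inequality \eqref{strengthNCeq} forces $({\rm co}\,\Lambda)^{\#}\neq\emptyset$ (hence $=\{\hat\lambda\}$), and Lemma \ref{Lambdawlsc} then yields \eqref{R0pos}. You merely make explicit the empty-set convention and the zero critical direction, which the paper leaves implicit.
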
 

\begin{proof}
It follows easily from Theorem \ref{strengthNC}. In fact, as the Lagrange multiplier is unique, ${\rm co}\, \Lambda = \Lambda= \{\hat\lambda\},$ and the inequality in \eqref{strengthNCeq} implies that $({\rm co}\, \Lambda)^{\#}\neq \emptyset.$  Therefore, $({\rm co}\, \Lambda)^{\#}=\Lambda^{\#} = \{\hat\lambda\}$ and \eqref{R0pos} necessarily holds.
\end{proof}

%%%%%%  GOH TRANSFORMATION %%%%%%%%%

\section{Goh Transformation}\label{GohT}

In this section we introduce the {\em Goh trasformation} which is a linear change of variables applied usually to a linear differential equation, and that is motivated by the facts explained in the sequel. In the previous section we were able to provide a necessary condition involving the nonnegativity on $\C_2$ of the maximum of $\Omega\lam$ over the set $({\rm co}\, \Lambda)^{\#}$ (Theorem \ref{strengthNC}). Our next step  is finding a sufficient condition. To achieve this one would naturally try to strengthen the inequality \eqref{strengthNCeq} to convert it into a condition of strong positivity. However,  since no quadratic term on $\vb\cd$ appears in $\Omega,$ the latter cannot be strongly positive with respect to the norm of the controls. Thus, what we do here to find the desired sufficient condition is transforming $\Omega$ into a new quadratic mapping that may result strongly positive on an appropriate transformed critical cone. For historical interest, we recall that Goh introduced this change of variables in \cite{Goh66a} and employed it to derive necessary conditions in \cite{Goh66a,Goh66}. Since then, many optimality conditions were obtained by using that transformation as already mentioned in the Introduction. 
% In particular,  Dmitruk in \cite{Dmi77} stated a second order sufficient condition for control-affine systems (case $l=0$) in terms of the uniform positivity of the maximum of $\Omega$ in the corresponding transformed space of variables. 

\vspace{4pt}

For the remainder of the article, we consider the following regularity hypothesis on the controls.
\begin{assumption}
\label{SmoothControls}
The controls $\uh\cd$ and $\vh\cd$ are smooth. 
\end{assumption}
This hypothesis is not restrictive since it is a consequence of the {\em strengthened generalized Legendre-Clebsch condition} as explained in Aronna \cite{Aro11,Aro13}, where it is shown that, whenever this generalized condition holds, one can write the controls as smooth functions of the state and costate variable. See also Remark \ref{RemarkLC} below.

\vspace{4pt}

Consider hence the linearized state equation \eqref{lineareq} and
the {\em Goh transformation} defined by 
\be \label{Goht}
\left\{
\ba{l}
\yb(t):= \ds\int_0^t \vb(s) {\rm d}s, \\
\xib(t) := \xb(t)-  F_{v}[t]\,\yb(t), 
\ea
\right.
\quad {\rm for}\ t\in [0,T].
\ee
Observe that $\xib\cd$ defined in that way satisfies the linear equation
\be \label{xieq}
\dot\xib  = F_{x}\,\xib + F_{u}\,\ub +B\,\yb,\quad
\xib(0)=\xb(0),
\ee
where 
\be
\label{B1}
B:= F_{x} F_{v}-\ddt F_{v}.
\ee
Here $B$ is an $n\times m$-matrix whose  $i$th column is given by
\benl
-[f_i,f_0]^x-\sum_{j=1}^m \vh_j[f_i,f_j]^x + D_u f_i\, \dot{\uh},
\eenl
where $[f_i,f_j]^x:=({\rm D}_xf_i)f_j-(D_xf_j)f_i$ and it is referred as the {\it Lie bracket with respect to $x$} of the vector fields $f_i$ and $f_j.$

%%%%%%%%%%%% TRANSFORMED CRITICAL CONES %%%%%%%%%%%%%%%%%%%%%

\subsection{Tranformed critical cones}
In this paragraph we present the critical cones
obtained after Goh's transformation.
We shall recall the linearized endpoint constraints \eqref{linearconseq}-\eqref{linearconsineq} and the critical cones \eqref{C2}-\eqref{C}.
Let $(\xb,\ub,\vb)\cd\in \C$ be a critical
direction. Define $(\xib,\yb)\cd$ by Goh's transformation
\eqref{Goht} and set $\hb:=\yb(T).$
From \eqref{linearconseq}-\eqref{linearconsineq} we get
\begin{gather}
\label{tlinearconseq} D\eta_j (\xh(0),\xh(T))\big(\xib(0),\xib(T)+F_{v}[T]\hb\big)=0,\quad
\mr{for}\,\, j=1,\hdots,d_{\eta},
\\
\label{tlinearconsineq}
D\varphi_i(\xh(0),\xh(T))\big(\xib(0),\xib(T)+F_{v}[T]\hb\big)\leq
0,\quad \mr{for}\,\,
i=0,\hdots,d_{\varphi}.
\end{gather}
Remind the definition of the linear space $\W_2$ given in paragraph \ref{ParCritical}.
Let $\Y$ denote the Sobolev space $W^{1,\infty}(0,T;\cR^m),$
and consider the cones
\be 
\label{P}
\P:= \{(\xib\cd, \ub\cd,\yb\cd,\hb)\in \W \times
\cR^m:\,\yb(0)=0,\,\yb(T)=\hb,\,\text{\eqref{xieq},
\eqref{tlinearconseq}-\eqref{tlinearconsineq} hold} \},
\ee
\be \label{P2}
\P_2:= \{(\xib\cd, \ub\cd,\yb\cd,\hb)\in \W_2 \times
\cR^m:\,\text{\eqref{xieq}, \eqref{tlinearconseq}-\eqref{tlinearconsineq}
hold} \}.
\ee
\begin{remark}
 \label{PandC}
Observe that $\P$ is the cone obtained from $\C$ via Goh's transformation \eqref{Goht}.
\end{remark}
The next result shows the density of $\P$ in $\P_2.$ This fact is used afterwards when we extend a necessary condition stated in $\P$ to the bigger cone $\P_2$ by continuity arguments, as it was done for $\C$ and $\C_2$ in Section \ref{SectionSOC}.
\begin{lemma} \label{PdenseP2}
$\P$ is a dense subspace of $\P_2$ in the $ \W_2
\times \cR^m$-topology.
\end{lemma}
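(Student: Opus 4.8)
The plan is to mirror the proof of Lemma~\ref{conedense} and reduce the claim to the abstract density-of-cones result, Lemma~\ref{lemmadense}. I would set
\[
X := \{(\xib,\ub,\yb,\hb)\in\W_2\times\cR^m : \eqref{xieq}\ \text{holds}\},
\]
which is the solution set of the linear equation~\eqref{xieq} inside $\W_2\times\cR^m$, a closed subspace and hence a locally convex (Banach) space. Next I would take
\[
Y := \{(\xib,\ub,\yb,\hb) : \xib\in\X,\ \ub\in\U,\ \yb\in\Y,\ \hb\in\cR^m,\ \eqref{xieq}\ \text{holds},\ \yb(0)=0,\ \yb(T)=\hb\},
\]
a linear subspace of $X$ in which $\yb$ carries the $W^{1,\infty}$-regularity making the traces $\yb(0),\yb(T)$ meaningful, and $Z := \P_2$. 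The transformed endpoint relations \eqref{tlinearconseq}--\eqref{tlinearconsineq} are $d_\eta$ continuous linear equalities and $d_\varphi+1$ continuous linear inequalities on $X$---the evaluations $\xib(0),\xib(T)$ being continuous on $W^{1,2}$ and $\hb$ entering linearly---so $Z$ is a finite-faced cone and, by construction, $Z\cap Y=\P$. Granting that $Y$ is dense in $X$, Lemma~\ref{lemmadense} then yields at once that $\P=Z\cap Y$ is dense in $Z=\P_2$.

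The only hypothesis of Lemma~\ref{lemmadense} requiring an argument is the density of $Y$ in $X$ for the $\W_2\times\cR^m$-topology. Given $(\xib,\ub,\yb,\hb)\in X$, I would approximate the $L^2$-data $(\ub,\yb)$ by smoother data while respecting the endpoint conditions on $\yb$. First choose $\ub^k\in\U$ with $\ub^k\to\ub$ in $L^2$. To handle $\yb$, write $\yb=g+(\yb-g)$ with $g(t):=(t/T)\hb\in\Y$ satisfying $g(0)=0$, $g(T)=\hb$, approximate $\yb-g$ by $\phi^k\in C^\infty_c(0,T;\cR^m)$ with $\phi^k\to\yb-g$ in $L^2$, and set $\yb^k:=g+\phi^k\in\Y$; then $\yb^k(0)=0$, $\yb^k(T)=\hb$ and $\yb^k\to\yb$ in $L^2$. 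Keeping $\hb$ and the initial value $\xib(0)$ fixed and letting $\xib^k$ solve \eqref{xieq} with data $(\ub^k,\yb^k)$, a Gronwall estimate using the essential boundedness of $F_x,F_u,B$ gives $\xib^k\to\xib$ in $W^{1,2}$. Hence $(\xib^k,\ub^k,\yb^k,\hb)\in Y$ converges to $(\xib,\ub,\yb,\hb)$ in $\W_2\times\cR^m$, establishing the density.

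I do not anticipate a serious obstacle: the scheme is entirely parallel to Lemma~\ref{conedense}, and the only feature genuinely absent there is the pair of trace constraints $\yb(0)=0$, $\yb(T)=\hb$ built into $Y$. These are harmless precisely because they are imposed only on the $W^{1,\infty}$-regular representatives, where the traces are defined, and because the explicit splitting $\yb^k=g+\phi^k$ shows that the corresponding affine slice is still reached in the $L^2$-limit. The remaining points---closedness of $X$, finite-facedness of $\P_2$, and the identity $Z\cap Y=\P$---are immediate from the definitions.
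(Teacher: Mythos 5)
Your proof is correct and follows essentially the same route as the paper: the same triple $X$, $Y$, $Z=\P_2$ is introduced and Lemma~\ref{lemmadense} is invoked, with $Z\cap Y=\P$. The only difference is that the paper cites Dmitruk--Shishov and Aronna et al.\ for the density of $Y$ in $X$, whereas you prove it directly via the splitting $\yb=g+(\yb-g)$ and a Gronwall estimate, which is a valid self-contained substitute.
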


\begin{proof}
Notice that the inclusion $\P\subset \P_2$ is immediate. In order to prove the density, consider the linear spaces
\begin{gather*}
 X:= \{ (\xib\cd,\ub\cd,\yb\cd,\hb)\in \W_2\times \cR^m:\ \eqref{xieq}\ {\rm holds}\},\\
Y:=\{ (\xib\cd,\ub\cd,\yb\cd,\hb)\in \W\times \cR^m:\ \yb(0)=0,\, \yb(T)=\hb\ {\rm and}\ \eqref{xieq}\ {\rm holds}\},
\end{gather*}
and the cone
\benl
Z:= \{ (\xib\cd,\ub\cd,\yb\cd,\hb)\in X:\ \text{\eqref{tlinearconseq}-\eqref{tlinearconsineq}}\ {\rm holds}\}.
\eenl
Notice that $Y$ is a dense linear subspace of $X$  (Dmitruk-Shishov  \cite[Lemma 6]{DmiShi10} or Aronna et al. \cite[Lemma 8.1]{ABDL11}), and $Z$
is a finite-faced cone of $X. $ The desired density follows by Lemma \ref{lemmadense}. 
\end{proof}

%%%%%%%%%%%% TRANSFORMED SECOND VARIATION %%%%%%%%%%%%%%%%%%

\subsection{Transformed second variation}
Next we write the quadratic mapping $\Omega$ in the variables $(\xib\cd,\ub\cd,\yb\cd,\vb\cd,\hb).$
Set, for $\lambda\in ({\rm co}\, \Lambda)^{\#},$
\be 
\label{OmegaP}
\begin{split}
 \Omega_{\P}  &\lam (\xib,\ub,\yb,\vb,\hb) 
:= g\lam (\xib(0),\xib(T),\hb)
+ \ds \intT \left( \half\xib\,\tras H_{xx}\lam \xib + \ub\tras
H_{ux}\lam\xib \right.\\
 &\left. +\, \yb\tras M\lam \xib + \half\ub\tras H_{uu}\lam
\ub + \yb\tras E\lam \ub +
\half\yb\tras R\lam \yb + \vb\tras G\lam \yb
  \right) \dtt,
\end{split}
\ee 
where 
\begin{gather}
\label{M}
M:= F_v\tras H_{xx}-\dot H_{vx}-H_{vx}F_x,\quad E:= F_v\tras H_{ux}\tras - H_{vx}F_u,
\\
\label{SV}
S:=\half (H_{vx}F_v + (H_{vx}F_v)\tras),\quad
G:= \half (H_{vx}F_v - (H_{vx}F_v)\tras),
\\
\label{R1}
R := F_v\tras H_{xx}F_v - (H_{vx}B+(H_{vx}B)\tras) -
\dot S,
\\
\label{g}
g\lam (\xi_0,\xi_T,h):=
\half\ell''(\xi_0,\xi_T+F_{v}[T]\,h)^2
+h\tras(H_{vx}[T]\, \xi_T+\half S[T] h).
\end{gather}
%{\color{blue} $H_{vx}$ of transposed in \eqref{g}?}
Observe that, in view of Assumptions \ref{regular} and \ref{SmoothControls}, all the functions defined above are continuous in time. 

\begin{remark}
We can see that $M$ is an $m\times n$-matrix whose $i$th row is given by the formula 
\benl
M_i = p\sum_{j=0}^m \vh_j  \left(  \frac{\pr^2 f_j}{\pr x^2} f_i - 
 \frac{\pr^2 f_i}{\pr x^2} f_j +  \frac{\pr f_j}{\pr x} \frac{\pr f_i}{\pr x} - \frac{\pr f_i}{\pr x} \frac{\pr f_j}{\pr x} \right)- p \frac{\pr^2 f_i}{\pr x\pr u} \dot{\uh},
\eenl
$E$ is $m\times l$ with
$
E_{ij}=p\ds \frac{\pr^2 F}{\pr u_j \pr x} f_i - p  \frac{\pr f_i}{\pr x}\frac{\pr F}{\pr u_j},
$
the $m\times m-$matrices $S$ and $G$ have entries
$
S_{ij} = \ds\half p \left( \frac{\pr f_i}{\pr x}f_j + \frac{\pr f_j}{\pr x}f_i \right),
$
and
\be
\label{Vcrochet}
G_{ij}=p[f_i,f_j]^x,
\ee
respectively.
The components of the matrix $R$ have a quite long expression, that is simplified for some multipliers as it is detailed in equation  \eqref{Rij} in the next section.
\end{remark}

The identity between $\Omega$ and $\Omega_\P$ stated in the following lemma holds.
\begin{lemma} 
\label{Omegat}
Let $\lambda \in ({\rm co}\, \Lambda)^{\#},$ $(\xb,\ub,\vb)\cd \in \H_2$ (given in \eqref{H2}) and $(\xib,\yb)\cd$ be defined by Goh's transformation
\eqref{Goht}. Then
\benl
\Omega\lam (\xb,\ub,\vb) = \Omega_{\P} \lam
(\xib,\ub,\yb,\vb,\yb(T)).
\eenl
\end{lemma}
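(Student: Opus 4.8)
The plan is to substitute Goh's transformation \eqref{Goht} directly into the expression \eqref{Omega} for $\Omega\lam$ and to reorganize the resulting integrand by integration by parts so that it matches, term by term, the definition \eqref{OmegaP} of $\Omega_{\P}\lam$. Two preliminary simplifications set the stage. First, since $\lambda\in({\rm co}\,\Lambda)^{\#}$, Lemma \ref{Lambdawlsc} gives $H_{vu}\lam=0$, so the term $\vb\tras H_{vu}\lam\ub$ of \eqref{Omega} drops out. Second, from \eqref{Goht} we have $\xb=\xib+F_v\yb$ and $\vb=\dot{\yb}$, with $\yb(0)=0$ and $\yb(T)=\hb$; substituting $\xb=\xib+F_v\yb$ into the endpoint term and using $\yb(0)=0$ immediately produces the argument $(\xib(0),\xib(T)+F_v[T]\hb)$ of the quadratic form in \eqref{g}.

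First I would expand the integrand of \eqref{Omega} under $\xb=\xib+F_v\yb$. This yields the ``diagonal'' terms $\half\xib\tras H_{xx}\lam\xib$, $\ub\tras H_{ux}\lam\xib$ and $\half\ub\tras H_{uu}\lam\ub$ already present in \eqref{OmegaP}, together with the cross terms $\yb\tras F_v\tras H_{xx}\lam\xib$, $\half\yb\tras F_v\tras H_{xx}F_v\yb$ and $\yb\tras F_v\tras H_{ux}\lam\tras\ub$ (using the symmetry of $H_{xx}\lam$ and transposition of scalars), plus the two terms carrying $\vb=\dot{\yb}$, namely $\dot{\yb}\tras H_{vx}\lam\xib$ and $\dot{\yb}\tras H_{vx}\lam F_v\yb$.

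The core of the argument is to remove the derivative $\dot{\yb}$ by integration by parts. For $\intT\dot{\yb}\tras H_{vx}\lam\xib\,\dtt$ I would integrate by parts, discard the boundary term at $0$ (since $\yb(0)=0$), keep $\hb\tras H_{vx}\lam[T]\xib(T)$ at $T$, and substitute $\dot{\xib}=F_x\xib+F_u\ub+B\yb$ from \eqref{xieq} into $\ddt(H_{vx}\lam\xib)$; this produces exactly the combinations $M=F_v\tras H_{xx}-\dot{H}_{vx}-H_{vx}F_x$ and $E=F_v\tras H_{ux}\tras-H_{vx}F_u$ of \eqref{M}, together with a contribution $-\yb\tras H_{vx}B\yb$. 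For $\intT\dot{\yb}\tras H_{vx}\lam F_v\yb\,\dtt$ I would split $H_{vx}\lam F_v=S\lam+G\lam$ into its symmetric and antisymmetric parts \eqref{SV}: the antisymmetric part is left untouched and, since $\vb=\dot{\yb}$, reproduces the term $\vb\tras G\lam\yb$ of \eqref{OmegaP}, while for the symmetric part I use $\dot{\yb}\tras S\lam\yb=\half\ddt(\yb\tras S\lam\yb)-\half\yb\tras\dot{S}\yb$, again discarding the boundary term at $0$, keeping $\half\hb\tras S\lam[T]\hb$ at $T$, and contributing $-\half\yb\tras\dot{S}\yb$ to the integrand.

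Collecting everything then completes the proof. The surviving boundary contributions $\half\ell''(\xib(0),\xib(T)+F_v[T]\hb)^2$, $\hb\tras H_{vx}\lam[T]\xib(T)$ and $\half\hb\tras S\lam[T]\hb$ assemble into $g\lam(\xib(0),\xib(T),\hb)$ of \eqref{g}; the $\yb\tras(\,\cdot\,)\xib$ terms combine into $\yb\tras M\lam\xib$, the $\yb\tras(\,\cdot\,)\ub$ terms into $\yb\tras E\lam\ub$, and the purely quadratic $\yb$ terms into $\half\yb\tras R\lam\yb$. I expect the main obstacle to be the bookkeeping in this last step, and in particular the correct assembly of $R$: one must observe that the scalar $\yb\tras H_{vx}B\yb$ equals $\half\yb\tras(H_{vx}B+(H_{vx}B)\tras)\yb$, so that together with $\half\yb\tras F_v\tras H_{xx}F_v\yb$ and $-\half\yb\tras\dot{S}\yb$ it reproduces exactly $R=F_v\tras H_{xx}F_v-(H_{vx}B+(H_{vx}B)\tras)-\dot{S}$ of \eqref{R1}. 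Keeping careful track of which boundary terms survive (thanks to $\yb(0)=0$) and of each symmetrization is where the care is needed; no idea beyond Goh's integration by parts is required.
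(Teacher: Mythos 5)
Your proposal is correct and follows essentially the same route as the paper's own proof in Appendix \ref{AppendixOmegat}: drop the $\vb\tras H_{vu}\lam\ub$ term via Lemma \ref{Lambdawlsc}, substitute $\xb=\xib+F_v\yb$, integrate by parts the two terms carrying $\vb$ using \eqref{xieq} and the splitting $H_{vx}F_v=S+G$, and collect the pieces into $g$, $M$, $E$ and $R$. Your bookkeeping of the boundary terms and the symmetrization $\yb\tras H_{vx}B\yb=\half\yb\tras(H_{vx}B+(H_{vx}B)\tras)\yb$ matches the paper's computation exactly.
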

The proof of this lemma is merely technical and we leave it to the Appendix \ref{AppendixOmegat}.

Finally let us remind the strengthened necessary condition of Theorem \ref{strengthNC}. Observe that by  Goh's transformation \eqref{strengthNCeq} and in view of Remark \ref{PandC}, we obtain the following form of the second order necessary condition.
\begin{corollary}
 \label{transNC}
 If $\wh\cd$ is a weak minimum of problem (P), then
\be \label{maxOmegaP}
\max_{\lambda\in ({\rm co}\, \Lambda)^{\#}} \Omega_{\P} \lam
(\xib,\ub,\yb,\dot\yb,\hb) \geq 0,\quad \mr{on}\ \P.
\ee
\end{corollary}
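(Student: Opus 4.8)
The plan is to transfer the strengthened necessary condition of Theorem~\ref{strengthNC} from the critical cone $\C$ to its Goh-transformed counterpart $\P$, using the pointwise-in-$\lambda$ identity of Lemma~\ref{Omegat}. First I would recall from Remark~\ref{PandC} that $\P$ is precisely the image of $\C$ under the change of variables \eqref{Goht}, and that this transformation is invertible: given $(\xib,\yb)\cd$ one recovers $\xb=\xib+F_v\yb$ and $\vb=\dot\yb$. Hence fixing an arbitrary $(\xib,\ub,\yb,\hb)\in\P$ amounts to fixing a critical direction $(\xb,\ub,\vb)\in\C$, related through \eqref{Goht} with $\vb=\dot\yb$ and $\hb=\yb(T)$.

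Next, since $\C\subset\H_2$, Lemma~\ref{Omegat} applies and yields, for every single $\lambda\in({\rm co}\,\Lambda)^{\#}$, the equality
\[
\Omega\lam(\xb,\ub,\vb)=\Omega_{\P}\lam(\xib,\ub,\yb,\dot\yb,\hb).
\]
Because this identity holds for each $\lambda$ in the index set $({\rm co}\,\Lambda)^{\#}$, taking the maximum over $\lambda$ on both sides preserves it, so the two maxima coincide. Finally, since $\C\subset\C_2$, Theorem~\ref{strengthNC} guarantees that $\max_{\lambda\in({\rm co}\,\Lambda)^{\#}}\Omega\lam(\xb,\ub,\vb)\geq0$, and the coincidence of the maxima transports this inequality to $\Omega_{\P}$, yielding exactly \eqref{maxOmegaP} for the chosen element of $\P$. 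As the element was arbitrary, the condition holds on all of $\P$.

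I do not expect any genuine obstacle here: the corollary is a bookkeeping consequence of three already-established facts. The only two points deserving care are (i) that the correspondence $\C\leftrightarrow\P$ is a genuine bijection rather than merely a dense embedding, so that every element of $\P$ really does arise from some critical direction — this is the content of Remark~\ref{PandC} together with the invertibility of \eqref{Goht}; and (ii) that the identity in Lemma~\ref{Omegat} is valid for each fixed multiplier, which is precisely what licenses commuting the maximization over $({\rm co}\,\Lambda)^{\#}$ with the change of variables. Neither point requires any new estimate.
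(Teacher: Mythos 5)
Your argument is correct and is precisely the route the paper takes: Theorem \ref{strengthNC} on $\C\subset\C_2$, the identity of Lemma \ref{Omegat} applied multiplier by multiplier, and the correspondence $\C\leftrightarrow\P$ of Remark \ref{PandC} via the invertibility of \eqref{Goht}. The paper compresses this into one sentence, but the content is identical, including the two points of care you flag.
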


%%%%%%% NEW SECOND ORDER NECESSARY CONDITION %%%%%%

\section{New second order necessary condition}\label{SectionNC}

We aim at removing the dependence on $\vb$ in the formulation of the  second order necessary condition of Corollary \ref{transNC} above.  
Note that in the inequality \eqref{maxOmegaP}, $\vb=\dot \yb$ appears only in the term $\vb\tras G\lam \yb.$  We prove in the sequel that we can
restrict the maximum in \eqref{maxOmegaP} to the
subset of $({\rm co}\, \Lambda)^{\#}$ consisting of the
multipliers for which $G\lam$ vanishes.

Let $G({\rm co}\, \Lambda)^{\#}$ refer to  the subset of $({\rm co}\, \Lambda)^{\#}$ for which $G\lam$ vanishes, i.e. 
\be
G({\rm co}\, \Lambda)^{\#}:=\{\lambda\in ({\rm co}\, \Lambda)^{\#}: G\lam \equiv 0\}.
\ee
Hence, the following optimality condition holds.
\begin{theorem}[New necessary condition]\label{newNC}
 If $\wh\cd$ is a weak minimum of problem (P), then
\be
\max_{\lambda\in G({\rm co}\, \Lambda)^{\#}} 
\Omega_{\P} \lam (\xib,\ub,\yb,\dot\yb,\yb(T)) \geq
0,\quad \text{on}\ \P.
\ee
\end{theorem}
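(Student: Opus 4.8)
The starting point is Corollary \ref{transNC}, which gives $\max_{\lambda\in({\rm co}\, \Lambda)^{\#}}\Omega_\P\lam(\xib,\ub,\yb,\dot\yb,\hb)\ge0$ on $\P$. As already noted, the affine control $\vb=\dot\yb$ enters $\Omega_\P$ only through the term $\intT\vb\tras G\lam\yb\,\dtt$, and by \eqref{SV} the matrix $G\lam$ is antisymmetric (its entries are the Lie-bracket terms $G_{ij}\lam=p[f_i,f_j]^x$, cf. \eqref{Vcrochet}). The plan is to show that a multiplier with $G\lam\not\equiv0$ can never be needed to realize the maximum, so that the maximum may be restricted to $G({\rm co}\, \Lambda)^{\#}$. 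I would achieve this by feeding high-frequency \emph{rotating} perturbations of $\yb$ into the necessary condition and then exchanging a supremum and an infimum by a minimax argument.

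Concretely, fix $(\xib,\ub,\yb,\hb)\in\P$ and, for a smooth weight $\phi$ with $\phi(0)=\phi(T)=0$, indices $i\neq j$ and a sign $\sigma=\pm1$, set $\delta\yb_N(t):=\rho\,\phi(t)\big(\cos(Nt)\,e_i+\sigma\sin(Nt)\,e_j\big)$ with amplitude $\rho=(s/N)^{1/2}$ for a fixed strength $s>0$. Then $\delta\yb_N(0)=\delta\yb_N(T)=0$, so $\yb+\delta\yb_N$ still satisfies $\yb(0)=0,\ \yb(T)=\hb$, and I recompute $\xib$ from \eqref{xieq} with the same initial value; this changes it only by $\delta\xib_N=O(\rho/N)$, since high-frequency forcing produces a lower-order response. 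The only term of $\Omega_\P$ that survives in the limit is the cross term, and a direct computation exploiting the antisymmetry of $G\lam$ gives
\be
\intT (\dot\yb+\dot{\delta\yb}_N)\tras G\lam(\yb+\delta\yb_N)\,\dtt \longrightarrow \intT \dot\yb\tras G\lam\yb\,\dtt + \sigma\, s\, c\intT \phi^2\,G_{ij}\lam\,\dtt,
\ee
as $N\to\infty$, with $c>0$ a universal constant; all remaining contributions are $O(\rho)+O(\rho/N)\to0$, uniformly over the compact set $({\rm co}\, \Lambda)^{\#}$ because $\lambda$ enters the coefficients linearly. Writing $\Xi$ for the antisymmetric profile whose $(i,j)$-entry is $\sigma s c\,\phi^2$, and observing that such profiles (over all $i,j$, both signs and all $\phi\ge0$) span every antisymmetric-valued profile, passing to the limit in Corollary \ref{transNC} yields the relaxed condition
\be
\max_{\lambda\in({\rm co}\, \Lambda)^{\#}} \Big(\Omega_\P\lam(\xib,\ub,\yb,\dot\yb,\hb)+\la \Xi, G\lam\ra\Big)\ge0\quad\text{for every such }\Xi.
\ee

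To conclude I would apply a minimax theorem (Sion's) to $\Phi(\lambda,\Xi):=\Omega_\P\lam(\xib,\ub,\yb,\dot\yb,\hb)+\la\Xi,G\lam\ra$, which is affine (hence concave) in $\lambda$ on the convex compact set $({\rm co}\, \Lambda)^{\#}$ and affine (hence convex) in $\Xi$ on the linear space of antisymmetric profiles. The relaxed condition says $\inf_\Xi\max_\lambda\Phi\ge0$, while the inner infimum in $\sup_\lambda\inf_\Xi\Phi$ equals $\Omega_\P\lam$ if $G\lam\equiv0$ and $-\infty$ otherwise (take $\Xi=-t\,G\lam$, $t\to+\infty$); hence $\sup_{\lambda\in G({\rm co}\, \Lambda)^{\#}}\Omega_\P\lam=\sup_\lambda\inf_\Xi\Phi=\inf_\Xi\sup_\lambda\Phi\ge0$. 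This is exactly the asserted inequality, and in particular it forces $G({\rm co}\, \Lambda)^{\#}\neq\emptyset$.

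The main obstacle is the uniform oscillation estimate of the second paragraph. One must keep the perturbed direction exactly in $\P$: the high-frequency change shifts $\xib(T)$ by $O(\rho/N)$ and so may violate the linearized endpoint equalities \eqref{tlinearconseq}, a defect that has to be repaired by an $O(\rho/N)$ correction (using controllability of \eqref{xieq} onto the constraint directions, or the finite-faced structure of $\P$) that does not affect the limit; and one must check that all remainder terms are genuinely $o(1)$ uniformly in $\lambda$. The second, softer point is the justification of the minimax exchange, for which the finite-dimensional compactness of $({\rm co}\, \Lambda)^{\#}$ inherited from Theorem \ref{LambdaCompact} and the bilinear-affine structure of $\Phi$ are precisely what is required.
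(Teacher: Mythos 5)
The paper does not actually prove Theorem \ref{newNC}: it states that the proof of Aronna et al.\ \cite[Theorem 4.6]{ABDL11} (which goes back to Dmitruk \cite{Dmi77} and Milyutin \cite{Mil81}) carries over with minor modifications. Your strategy --- high-frequency rotating perturbations of $\yb$ that leave $\yb(0),\yb(T)$ fixed, produce in the limit an extra term $\la \Xi, G\lam\ra$ with $\Xi$ antisymmetric, followed by a concave--convex minimax exchange over the compact convex set $({\rm co}\,\Lambda)^{\#}$ --- is precisely that classical argument, so in spirit you are reproducing the intended proof. The oscillation computation is right (antisymmetry of $G$ kills the diagonal terms and the $\rho^2N=s$ scaling isolates $\sigma s\int\phi^2G_{ij}$), the $O(\rho)$ estimates on the remaining terms and the $O(\rho/N)$ repair of the endpoint equalities are the standard technical points, and the Sion step is legitimate given the finite-dimensional compactness of $({\rm co}\,\Lambda)^{\#}$ and the affine dependence of all coefficients on $\lambda$.

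There is, however, one concrete gap in your write-up: the sentence ``such profiles span every antisymmetric-valued profile, passing to the limit yields the relaxed condition'' does not follow. You establish $\max_{\lambda}\big(\Omega_\P\lam+\la\Xi,G\lam\ra\big)\ge 0$ only for $\Xi$ supported on a \emph{single} pair $(i,j)$, and since $\Xi\mapsto\max_\lambda\Phi(\lambda,\Xi)$ is convex, the inequality for a sum $\sum_k\Xi_k$ is \emph{not} implied by the inequalities for the individual $\Xi_k$ (convexity gives the upper bound $\max_\lambda\Phi(\cdot,\sum t_k\Xi_k)\le\sum t_k\max_\lambda\Phi(\cdot,\Xi_k)$, which is the wrong direction). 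Yet the minimax argument needs the premise $\inf_{\Xi\in\mathcal K}\max_\lambda\Phi\ge0$ over a \emph{convex} set $\mathcal K$ rich enough that $\inf_{\Xi\in\mathcal K}\la\Xi,G\lam\ra=-\infty$ whenever $G\lam\not\equiv0$. The fix is the standard one: superpose finitely many rotating perturbations at separated frequencies (e.g.\ $N, N^2,\dots$, or a diagonal extraction), check that all cross terms between distinct perturbations vanish in the limit by Riemann--Lebesgue, and thereby realize an arbitrary element of the convex cone of antisymmetric continuous profiles directly in the limit of Corollary \ref{transNC}. With that step supplied, your argument is complete and matches the proof the paper relies on by reference.
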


Theorem \ref{newNC} is an extension of similar results given in Dmitruk \cite{Dmi77},  Milyutin \cite{Mil81} and recently in Aronna et al. \cite{ABDL11}. The proof given in Aronna et al. \cite[Theorem 4.6]{ABDL11} holds for Theorem \ref{newNC} with minor modifications and hence we do not include it in the present article.

Notice that when $\wh\cd$ has a unique associated multiplier, from Theorem \ref{newNC} one can deduce that $G({\rm co}\, \Lambda)^{\#}$ is not empty, and since the latter is a singleton, the corollary below follows. This result gives an extension of the necessary conditions stated by Goh in \cite{Goh66} to the present framework.

\begin{corollary}[Goh conditions] 
\label{CoroCBsym}
 Assume that $\wh\cd$ is a weak minimum having a unique associated multiplier. Then the following conditions holds. \begin{itemize} \item[(i)] $G\equiv 0 $ or, equivalently, the matrix $H_{vx}F_v$ is symmetric, which, in view of \eqref{Vcrochet}, can be written as
\benl
p[f_i,f_j]^x\cd\equiv0,\quad \text{for}\ i,j=1,\dots,m,
\eenl
where $p\cd$ is the unique associated adjoint state.
\item[(ii)] The matrix
\be
\label{R2}
\begin{pmatrix}
H_{uu} & E^\top \\
E& R
\end{pmatrix}
\ee
is positive semidefinite.
\end{itemize}
\end{corollary}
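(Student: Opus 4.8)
The plan is to specialize the transformed necessary condition of Theorem \ref{newNC} to the single multiplier and then read off the two assertions. Since $\hat\lambda$ is the unique associated multiplier, $\Lambda={\rm co}\,\Lambda=\{\hat\lambda\}$, so that $G({\rm co}\,\Lambda)^{\#}\subseteq\{\hat\lambda\}$. Theorem \ref{newNC} yields $\max_{\lambda\in G({\rm co}\,\Lambda)^{\#}}\Omega_{\P}\lam\geq 0$ on $\P$; interpreting the maximum over the empty set as $-\infty$, this inequality cannot hold unless $G({\rm co}\,\Lambda)^{\#}\neq\emptyset$. Hence $G({\rm co}\,\Lambda)^{\#}=\{\hat\lambda\}$ and, in particular, $\Omega_{\P}\lamh\geq 0$ on $\P$.

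Assertion (i) then follows directly from the membership $\hat\lambda\in G({\rm co}\,\Lambda)^{\#}$. By the very definition of this set $G\lamh\equiv 0$; recalling from \eqref{SV} that $G$ is the skew-symmetric part $\half\bigl(H_{vx}F_v-(H_{vx}F_v)\tras\bigr)$ of $H_{vx}F_v$, this is exactly the symmetry of $H_{vx}F_v$, which by \eqref{Vcrochet} reads $p[f_i,f_j]^x\cd\equiv 0$ for $i,j=1,\dots,m$, with $p\cd$ the unique adjoint state. The inclusion $\hat\lambda\in({\rm co}\,\Lambda)^{\#}$ together with Lemma \ref{Lambdawlsc} moreover records $H_{uu}\lamh\succeq 0$ and $H_{vu}\lamh=0$, exactly as in Corollary \ref{NCunique}.

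For assertion (ii) I would exploit that, once $G\lamh\equiv 0$, the only term containing $\vb=\dot\yb$, namely $\vb\tras G\lamh\yb$, vanishes, so the necessary condition reads $\Omega_{\P}\lamh(\xib,\ub,\yb,\dot\yb,\yb(T))\geq 0$ on $\P$ with no dependence on $\dot\yb$. First I would transport this inequality to $\P_2$ via the density Lemma \ref{PdenseP2} and the continuity of all coefficients (Assumptions \ref{regular} and \ref{SmoothControls}). On $\P_2$ the functional is a genuine second variation for the transformed system \eqref{xieq}, with state $\xib$ and now \emph{independent} controls $(\ub,\yb)\in\U_2\times\V_2$, whose purely control-quadratic part equals $\half(\ub,\yb)\tras\bigl(\begin{smallmatrix}H_{uu}&E\tras\\ E&R\end{smallmatrix}\bigr)(\ub,\yb)$, i.e. the matrix in \eqref{R2}. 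Thus (ii) is the pointwise Legendre-Clebsch condition for this transformed problem, and I would obtain it by the concentration argument underlying Lemma \ref{Lemmawlsc} and Corollary \ref{NCunique}: at a Lebesgue point $t_0\in(0,T)$ of the continuous matrix in \eqref{R2} and for a direction $d\in\cR^{l+m}$, insert into $\P_2$ the needle perturbation $(\ub,\yb)=d\,\mathbf 1_{[t_0,t_0+\eps]}$ with $\xib(0)=0$ and $\hb=0$. As $\eps\to 0$ the induced $\xib$ is $\mathcal{O}(\eps)$, every state and mixed term is $o(\eps)$, and dividing by $\eps$ leaves $\half\,d\tras\bigl(\begin{smallmatrix}H_{uu}&E\tras\\ E&R\end{smallmatrix}\bigr)(t_0)\,d\geq 0$; letting $d$ and $t_0$ vary gives that the matrix in \eqref{R2} is positive semidefinite a.e., hence everywhere by continuity.

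The step I expect to be the main obstacle is precisely this localization for (ii): the needle with $\xib(0)=0$ and $\hb=0$ produces $\xib(T)=\mathcal{O}(\eps)$, which in general violates the linearized endpoint relations \eqref{tlinearconseq}-\eqref{tlinearconsineq} by $\mathcal{O}(\eps)$, so the perturbation need not lie in $\P_2$. To restore admissibility I would add a fixed correcting direction of $\W_2\times\cR^m$-norm $\mathcal{O}(\eps)$ that cancels this residual; since the needle has $L^2$-norm $\mathcal{O}(\eps^{1/2})$, the correction contributes only $\mathcal{O}(\eps^{3/2})=o(\eps)$ to $\Omega_{\P}\lamh$ and does not affect the limit. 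Establishing that such a correction exists (a mild qualification of the endpoint constraints on $\P_2$) and that all its cross terms are genuinely $o(\eps)$ is the delicate quantitative point; everything else is routine once $G({\rm co}\,\Lambda)^{\#}=\{\hat\lambda\}$ has been secured.
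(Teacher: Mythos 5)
Your reduction to the single multiplier and your proof of part (i) follow the paper's argument exactly: Theorem \ref{newNC} forces $G({\rm co}\,\Lambda)^{\#}\neq\emptyset$ (the maximum over an empty set could not be $\geq 0$ at the critical direction $0\in\P$), hence $G({\rm co}\,\Lambda)^{\#}=\{\hat\lambda\}$, and $G\lamh\equiv 0$ is nothing but membership in that set. The difficulty is part (ii). Your needle-variation argument hinges, as you yourself flag, on producing a correcting direction of norm $\mathcal{O}(\eps)$ that restores the finitely many linearized endpoint relations \eqref{tlinearconseq}--\eqref{tlinearconsineq} violated by the needle. Such a correction requires a surjectivity/interior-point (Mangasarian--Fromovitz-type) property of the transformed linearized endpoint map, which is \emph{not} among the hypotheses: the corollary assumes only uniqueness of the multiplier. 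Uniqueness does exclude the degenerate situation of Lemma \ref{degNC} for the equality constraints, but it does not give you the qualification needed to solve the $\mathcal{O}(\eps)$ residual while also respecting the active inequalities \eqref{tlinearconsineq}. So the step you call ``the delicate quantitative point'' is a genuine gap, not a routine verification.

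The paper closes this gap with machinery you cite elsewhere but do not deploy here, and which makes the correction unnecessary. Apply Lemma \ref{quadform} with $M=\{\hat\lambda\}$, with $H$ the Hilbert space of directions $(\xib,\ub,\yb,\hb)$ constrained only by the dynamics \eqref{xieq}, and with $K=\P_2$, a finite-faced cone in $H$: since $\max_{\psi\in M^{\#}}Q^{\psi}\geq 0$ must hold on $K\ni 0$, the set $M^{\#}$ cannot be empty, so $\Omega_{\P_2}\lamh$ is weakly lower semicontinuous on \emph{all} of $H$ --- the endpoint constraints are discharged by the lemma itself. The Hestenes characterization underlying Lemma \ref{Lemmawlsc}, applied to the transformed form whose purely control-quadratic part is $\half(\ub,\yb)\tras\bigl(\begin{smallmatrix}H_{uu}&E\tras\\ E&R\end{smallmatrix}\bigr)(\ub,\yb)$ with $(\ub,\yb)$ ranging over $\U_2\times\V_2$, then yields that the matrix in \eqref{R2} is positive semidefinite a.e., hence everywhere by continuity of the coefficients (Assumptions \ref{regular} and \ref{SmoothControls}). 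This abstract route is the one the paper intends, and it replaces your localization argument entirely.
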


We aim now at stating a necessary condition that does not depend on $\vb\cd.$
Let us note that, for $\lambda\in
G({\rm co}\, \Lambda)^{\#},$ the quadratic form
$\Omega\lam$ does not depend on $\vb\cd$ since
its coefficients vanish. We can then consider its
continuous extension to $\P_2$ for multipliers $\lambda\in G({\rm co}\, \Lambda)^{\#},$  given by
\be 
\label{OmegaP2}
\begin{split}
\Omega_{\P_2}\lam(\xib,\ub,\yb,\hb):= &\,g\lam
(\xib(0),\xib(T),\hb)
+ \ds \intT \left( \half\xib\,\tras H_{xx}\lam \xib + \ub\tras
H_{ux}\lam\xib \right.\\
 &\left. +\, \yb\tras M\lam \xib + \half\ub\tras H_{uu}\lam
\ub + \yb\tras E\lam \ub + \half\yb\tras R\lam \yb
\right) \dtt,
\end{split}
\ee
where the involved matrices and the function $g$ were defined in  \eqref{M}-\eqref{g}. Observe that, since $G\lam \equiv 0,$ one has that $H_{vx}\lam F_v$ is symmetric and, therefore, the $ij$ entry of $R\lam$ can be written as
\be
\label{Rij}
\begin{split}
R_{ij}\lam=&-p\left\{ [f_j,[f_0,f_i]^x]^x + \sum_{k=1}^m \vh_k [f_j,[f_k,f_i]^x]^x \right.\\
&+ \left.\left(  2\frac{\pr f_i}{\pr x} \frac{\pr f_j}{\pr u} +\frac{\pr f_j}{\pr x} \frac{\pr f_i}{\pr u} +\frac{\pr^2 f_i}{\pr u\pr x} f_j \right) \dot{\uh} \right\},
\end{split}
\ee
for each $i,j=1,\dots,m.$

From Theorem \ref{newNC}, it follows:
\begin{theorem}[Second order necessary condition in new variables] 
\label{NCP2}
 If $\wh\cd$ is a weak minimum of problem (P), then
\be
\label{Omegapos}
\max_{\lambda\in G({\rm co}\, \Lambda)^{\#}} 
\Omega_{\P_2} \lam (\xib,\ub,\yb,\hb) \geq 0,\quad
\text{on}\ \P_2.
\ee
\end{theorem}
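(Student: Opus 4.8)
The plan is to derive Theorem \ref{NCP2} directly from the new necessary condition of Theorem \ref{newNC}, in two steps: first recognizing that on the multiplier set $G({\rm co}\, \Lambda)^{\#}$ the two quadratic mappings $\Omega_\P$ and $\Omega_{\P_2}$ coincide, and then extending the resulting inequality from the cone $\P$ to the larger cone $\P_2$ by a density-and-compactness argument entirely parallel to the passage from Theorem \ref{classicalNC} to Theorem \ref{NCC2}.

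First I would exploit the defining property of $G({\rm co}\, \Lambda)^{\#}$. For any $\lambda$ in this set one has $G\lam \equiv 0$, and comparing the expressions \eqref{OmegaP} and \eqref{OmegaP2} one sees that the only term of $\Omega_\P$ involving $\vb$ is $\int_0^T \vb\tras G\lam \yb\,\dtt$. Hence, for such $\lambda$, that term vanishes and $\Omega_\P\lam(\xib,\ub,\yb,\vb,\hb) = \Omega_{\P_2}\lam(\xib,\ub,\yb,\hb)$, independently of $\vb$. Applying this with $\vb=\dot\yb$ and, on $\P$, $\hb=\yb(T)$, the inequality of Theorem \ref{newNC} immediately yields $\max_{\lambda\in G({\rm co}\, \Lambda)^{\#}}\Omega_{\P_2}\lam(\xib,\ub,\yb,\hb)\geq 0$ for every $(\xib,\ub,\yb,\hb)\in\P$.

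It then remains to pass from $\P$ to $\P_2$. By Assumptions \ref{regular} and \ref{SmoothControls} all the coefficient matrices $H_{xx}, H_{ux}, M, H_{uu}, E, R$ and the quadratic endpoint form $g$ are continuous in time and depend continuously (indeed affinely) on $\lambda$; consequently $\Omega_{\P_2}\lam$ extends to a continuous quadratic form on $\P_2$ in the strong $\W_2\times\cR^m$-topology, the endpoint contribution being controlled by the continuity of the trace on $W^{1,2}$. Given $(\xib,\ub,\yb,\hb)\in\P_2$, Lemma \ref{PdenseP2} provides a sequence $(\xib_k,\ub_k,\yb_k,\hb_k)\in\P$ converging to it strongly, and for each $k$ the inequality just obtained on $\P$ furnishes some $\lambda_k\in G({\rm co}\, \Lambda)^{\#}$ with $\Omega_{\P_2}\lambda_k(\xib_k,\ub_k,\yb_k,\hb_k)\geq 0$.

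The final step is the limit passage, which I expect to be the delicate point. The set $G({\rm co}\, \Lambda)^{\#}$ is a closed subset of the compact set ${\rm co}\, \Lambda$, the conditions $H_{uu}\lam\succeq0$, $H_{vu}\lam=0$ and $G\lam\equiv0$ all being closed and affine in $\lambda$, hence it is compact; so after extracting a subsequence we may assume $\lambda_k\to\lambda\in G({\rm co}\, \Lambda)^{\#}$. As $\lambda_k\to\lambda$ the associated costates converge uniformly, so the (bounded) coefficients of $\Omega_{\P_2}\lambda_k$ converge uniformly to those of $\Omega_{\P_2}\lambda$; combined with the strong convergence of the arguments in $\W_2\times\cR^m$ this gives $\Omega_{\P_2}\lambda_k(\xib_k,\ub_k,\yb_k,\hb_k)\to\Omega_{\P_2}\lambda(\xib,\ub,\yb,\hb)$, whence $\Omega_{\P_2}\lambda(\xib,\ub,\yb,\hb)\geq0$ and the maximum over $G({\rm co}\, \Lambda)^{\#}$ is nonnegative. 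The care required is precisely in this joint convergence: one must verify that strong convergence of the arguments together with uniform convergence of the coefficients suffices to carry both the integral quadratic terms and the endpoint term $g$ to the limit, which is where the time-continuity guaranteed by Assumption \ref{SmoothControls} and the compactness of the multiplier set play the essential role.
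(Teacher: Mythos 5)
Your proposal is correct and follows exactly the route the paper takes: the paper observes that for $\lambda\in G({\rm co}\,\Lambda)^{\#}$ the form $\Omega_{\P}\lam$ loses its only $\vb$-dependent term and extends continuously to $\P_2$, and then states that Theorem \ref{NCP2} "follows from Theorem \ref{newNC}". Your write-up simply makes explicit the density (Lemma \ref{PdenseP2}), the compactness of $G({\rm co}\,\Lambda)^{\#}$, and the limit passage that the paper leaves implicit.
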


%\begin{remark}
 %The latter optimality condition does not involve $\vb\cd.$ It is stated in the new variable $(\xib\cd,\ub\cd,\yb\cd,\hb).$
%\end{remark}

%%%%%%%%%% SUFFICIENT CONDITION
%%%%%%%%%%%%%%%%%%%%%%%%%%%%

\section[Second order sufficient condition]{Second order sufficient condition  for weak minimum}\label{SectionSC}

In this section we present the main contribution of the article: a second order sufficient condition for strict weak optimality.
% Its proof is an adaptation of the proof of \cite[Theorem 5.5]{ABDL11} with important simplifications due to the absence of control constraints, but with some new difficulties owed to the presence of the nonlinear control variable.
The optimality to be investigated here is with respect to the following   \textit{$\gamma$-order:} 
\be 
\gamma_\P\big(\xb(0),\ub\cd,\yb\cd,\hb\big):=|\xb(0)|^2+ |\hb|^2+
\intT(|\ub(t)|^2+|\yb(t)|^2)\dtt,
\ee
defined for $(\xb(0),\ub\cd,\yb\cd,\hb)\in \cR^n\times\U_2\times \V_2 \times
\cR^{m}.$
Let us note that $\gamma_\P$ can also be considered as a function of
$(\xb(0),\ub\cd,\vb\cd)\in \cR^n\times \U_2\times \V_2$ by setting
\be 
\gamma(\xb(0),\ub\cd,\vb\cd):=
\gamma_\P(\xb(0),\ub\cd,\yb\cd,\yb(T)),
\ee
with $\yb\cd$ being the primitive of $\vb\cd$ defined as in Goh transform \eqref{Goht}.

This $\gamma$-order was proposed in Dmitruk \cite{Dmi11} for a simpler {\em partially-affine} problem and it is a natural extension of the order suggested (for control-affine problems) in Dmitruk \cite{Dmi77}.

\begin{definition}\label{qgdef}[$\gamma$-growth]
We say that $\wh\cd$ satisfies the
$\gamma$-\textit{growth condition in the
weak sense} if there exist $\eps,\rho\gr 0$ such
that 
\be \label{qg}
\varphi_0(x(0),x(T)) \geq \varphi_0(\xh(0),\xh(T)) + \rho \gamma(x(0)-\xh(0),u\cd-\uh\cd,v\cd-\vh\cd),
\ee 
for every feasible trajectory $w\cd$ with $\|w\cd-\wh\cd\|_{\infty} \mi \eps.$
 
\end{definition}

%%%%%%%%%%%%% SUFFICIENT CONDITION
%%%%%%%%%%%%%%%%%%%%%%%%%

\begin{theorem}[Sufficient condition for weak optimality]\label{SC}
\begin{itemize}
\item[(i)]
Assume that there exists $\rho \gr 0$ such that
\be
\label{unifpos}
%\tag{UP}
\max_{\lambda\in G(\mr{co}\,\Lambda)^{\#}} 
\Omega_{\P_2} \lam (\xib,\ub,\yb,\hb) \geq
\rho\gamma_\P(\xib(0),\ub,\yb,\hb),\quad \text{on}\
\P_2.
\ee
Then $\wh\cd$ is a weak minimum satisfying 
$\gamma$-growth in the weak sense.
\item[(ii)] Conversely, if $\wh\cd$ is a weak solution satisfying $\gamma$-growth in the weak sense and such that $\alpha_0>0$ for every $\lambda\in G(\mr{co}\,\Lambda)^{\#},$ then \eqref{unifpos} holds for some positive $\rho.$
\end{itemize}
\end{theorem}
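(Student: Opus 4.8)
The plan is to prove the two implications of Theorem~\ref{SC} separately, in each case arguing by contradiction and combining the Lagrangian expansion of Lemma~\ref{expansionlagrangian} with the identity $\Omega[\lambda]=\Omega_{\P_2}[\lambda]$, which holds for $\lambda\in G(\mathrm{co}\,\Lambda)^{\#}$ by Lemma~\ref{Omegat} since $G[\lambda]\equiv0$ kills the $\vb$-term of $\Omega_{\P}$. For part (i), I would suppose $\wh\cd$ fails the $\gamma$-growth condition \eqref{qg}, producing for each $k$ a feasible $w_k\cd=(x_k,u_k,v_k)\cd$ with $\|w_k\cd-\wh\cd\|_\infty<1/k$ and $\varphi_0(x_k(0),x_k(T))-\varphi_0(\xh(0),\xh(T))<\tfrac1k\gamma_k$, where $\gamma_k:=\gamma(\delta w_k)>0$ and $\delta w_k\cd:=w_k\cd-\wh\cd$. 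Setting $\sigma_k:=\sqrt{\gamma_k}$ and $\bar w_k\cd:=\delta w_k\cd/\sigma_k$, Goh's transformation \eqref{Goht} yields $(\bar\xi_k,\bar u_k,\bar y_k,\bar h_k)$ with $\gamma_\P(\bar\xi_k(0),\bar u_k,\bar y_k,\bar h_k)=1$; in particular $\bar u_k\cd,\bar y_k\cd$ are bounded in $L^2$ and $\bar\xi_k(0),\bar h_k$ in $\cR^n,\cR^m$. Along a subsequence $\bar u_k\rightharpoonup\bar u$, $\bar y_k\rightharpoonup\bar y$ in $L^2$, $\bar\xi_k(0)\to\bar\xi_0$, $\bar h_k\to\bar h$, and since $\bar\xi_k\cd$ solves \eqref{xieq} it is bounded in $W^{1,2}$, hence converges in $C([0,T];\cR^n)$ to the solution $\bar\xi\cd$ of the limiting equation; feasibility forces the endpoint relations to pass to the limit, so $\bar w\cd:=(\bar\xi,\bar u,\bar y,\bar h)\in\P_2$.

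The key inequality comes from feasibility: for every $\lambda\in G(\mathrm{co}\,\Lambda)^{\#}$ one has $\mathcal{L}\lam(w_k)=\alpha_0\varphi_0(x_k(0),x_k(T))+\sum_{i\geq1}\alpha_i\varphi_i(x_k)\leq\alpha_0\varphi_0(x_k(0),x_k(T))$ (because $\alpha\geq0$, $\eta_j=0$, $\varphi_i\leq0$) while $\mathcal{L}\lam(\wh)=\alpha_0\varphi_0(\xh(0),\xh(T))$. Inserting the expansion of Lemma~\ref{expansionlagrangian} and dividing by $\gamma_k$ gives
\[
\Omega_{\P_2}\lam(\bar w_k)\ \leq\ \frac{\alpha_0}{k}\ -\ \frac{\omega\lam(\delta w_k)+\R(\delta w_k)}{\gamma_k}.
\]
The main technical obstacle is to show the last fraction tends to $0$. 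The remainder is controlled since $\|(\delta x,\delta u)\|_\infty\to0$ and $\|(\delta x,\delta u)\|_2^2=\mathcal{O}(\gamma_k)$, so $\R(\delta w_k)=o(\gamma_k)$. The cubic $\omega\lam$ is more delicate because it carries factors of $\delta v$, which $\gamma$ does \emph{not} control; here one integrates by parts (the same Goh device as in Lemma~\ref{Omegat}) to re-express it through $\delta y$ and boundary terms, after which $\|\delta y\|_2^2=\mathcal{O}(\gamma_k)$ together with the $L^\infty$-smallness yields $\omega\lam(\delta w_k)=o(\gamma_k)$. Consequently $\limsup_k\Omega_{\P_2}\lam(\bar w_k)\leq0$ for every such $\lambda$.

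I then split into two cases. If $\gamma_\P(\bar w)>0$, the weak lower semicontinuity of $\Omega_{\P_2}\lam$ (which holds since $\lambda\in(\mathrm{co}\,\Lambda)^{\#}$, i.e. $H_{uu}\lam\succeq0$ by Lemma~\ref{Lambdawlsc}) gives $\Omega_{\P_2}\lam(\bar w)\leq\liminf_k\Omega_{\P_2}\lam(\bar w_k)\leq0$ for all $\lambda$, so $\max_\lambda\Omega_{\P_2}\lam(\bar w)\leq0$, contradicting the coercivity \eqref{unifpos} at $\bar w$. If instead $\bar w=0$, then $\bar\xi_k\to0$ in $C$ and $\bar\xi_k(0),\bar h_k\to0$, so the weakly continuous part of $\Omega_{\P_2}$ vanishes in the limit and $\|\bar u_k\|_2^2+\|\bar y_k\|_2^2\to1$; choosing near-maximizing $\lambda_k$ (using compactness of $G(\mathrm{co}\,\Lambda)^{\#}$, $\lambda_k\to\lambda^\ast$) leaves the purely quadratic part
\[
\intT\Big(\tfrac12\bar u_k\tras H_{uu}[\lambda_k]\bar u_k+\bar y_k\tras E[\lambda_k]\bar u_k+\tfrac12\bar y_k\tras R[\lambda_k]\bar y_k\Big)\dtt\ \geq\ \rho-o(1),
\]
and I must show this is incompatible with $(\bar u_k,\bar y_k)\rightharpoonup0$. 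This trivial-weak-limit case is the crux: one uses the coercivity \eqref{unifpos} applied to the normalized tail, together with the convexity in $\bar u$ supplied by $H_{uu}[\lambda^\ast]\succeq0$, to upgrade weak to strong convergence $(\bar u_k,\bar y_k)\to0$, contradicting $\|\bar u_k\|_2^2+\|\bar y_k\|_2^2\to1$.

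For part (ii), I would again argue by contradiction: assuming $\gamma$-growth holds with $\alpha_0>0$ on $G(\mathrm{co}\,\Lambda)^{\#}$ but \eqref{unifpos} fails for every $\rho>0$, there exist $\bar w_k\cd\in\P_2$ with $\gamma_\P(\bar w_k)=1$ and $\max_\lambda\Omega_{\P_2}\lam(\bar w_k)<1/k$. Using the density Lemma~\ref{PdenseP2} I may take $\bar w_k\cd\in\P$, invert Goh's transformation to recover genuine critical directions in $\C$, and construct, by a Lyusternik/metric-regularity argument under the constraint qualification, a family of feasible trajectories $w_k^\tau\cd$ with $w_k^\tau\cd-\wh\cd=\tau(\bar x_k,\bar u_k,\bar v_k)\cd+o(\tau)$. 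Since $\alpha_0>0$, feasibility and the stationarity conditions \eqref{stationarity} identify the second-order increment of $\varphi_0$ along $w_k^\tau\cd$ with $\alpha_0^{-1}\Omega\lam(\delta w_k^\tau)=\alpha_0^{-1}\Omega_{\P_2}\lam(\ldots)$ up to $o(\tau^2)$, whence $\varphi_0(x_k^\tau)-\varphi_0(\xh)=o(\tau^2\gamma_\P(\bar w_k))=o(\tau^2)$, contradicting the $\gamma$-growth lower bound $\rho\tau^2$. The main obstacle here is the construction of the feasible family realizing a prescribed critical direction and the bookkeeping of the (possibly several) active constraints and multipliers; the hypothesis $\alpha_0>0$ is precisely what guarantees the cost is detected at second order by the Lagrangian.
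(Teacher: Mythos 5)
Your overall architecture for part (i) --- contradiction, normalization by $\sqrt{\gamma_k}$, Goh's transformation, extraction of a weak limit in $\P_2$, and the bound $\Omega_{\P_2}\lam(\bar w_k)\leq o(1)$ from the Lagrangian expansion --- matches the paper's Part (A) and the first half of its Part (B). But the case you yourself flag as the crux, namely weak limit equal to zero, is a genuine gap, and your proposed fix does not work. Knowing $H_{uu}\lam\succeq0$ only makes the $\bar u$-quadratic term weakly l.s.c.; it gives no control on the $E$- and $R$-terms, and weak l.s.c. of $\Omega_{\P_2}\lam$ at the limit $0$ merely yields $\Omega_{\P_2}\lam(\bar w_k)\to0$, which is not a contradiction (take $H_{uu}=E=R=0$: the purely quadratic part vanishes identically and no strong convergence of $(\bar u_k,\bar y_k)$ can be extracted). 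Nor can you ``apply the coercivity \eqref{unifpos} to the normalized tail'': the normalized iterates are not elements of $\P_2$ (their linearized endpoint constraints hold only up to $o(1)$), so \eqref{unifpos} is usable only at the weak limit, where it reads $0\geq0$. The paper's resolution is to apply Dmitruk's Lemma \ref{quadform} to the family $\{\Omega_{\P_2}\lam-\rho\gamma_\P\}$ on the cone $\P_2$, which produces a multiplier $\mathring\lambda$ for which the \emph{combined} functional $\Omega_{\P_2}[\mathring\lambda]-\rho\gamma_\P$ is weakly l.s.c. and nonnegative at the limit direction; since $\gamma_\P$ of each normalized iterate equals $1$, the l.s.c. of the combined functional transports the coercivity from the limit to the sequence and gives $0\leq-\rho$ in one stroke, with no case distinction. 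That use of Lemma \ref{quadform} is the missing idea.

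Part (ii) also departs from the paper and has two concrete problems. First, your Lyusternik/metric-regularity construction of feasible families realizing a critical direction requires a constraint qualification that is not among the hypotheses of the theorem; the paper avoids it entirely by noting that $\gamma$-growth makes $\wh\cd$ a weak minimum of the penalized problem \eqref{tildeP}, rewritten in Mayer form as \eqref{2tildeP}, whose second variation at a corresponding multiplier is $\Omega_{\P_2}\lam-\alpha_0\rho'\gamma_\P$, and then applies the \emph{necessary} condition of Theorem \ref{NCP2} to that problem, concluding with $\rho=\min\alpha_0\rho'>0$. Second, your identification of the cost increment with $\alpha_0^{-1}\Omega\lam(\cdot)+o(\tau^2)$ points the wrong way: for a feasible trajectory one only has $\alpha_0\bigl(\varphi_0(w)-\varphi_0(\wh)\bigr)\geq\mathcal{L}\lam(w)-\mathcal{L}\lam(\wh)$, i.e. a lower bound on the cost increment, whereas contradicting $\gamma$-growth requires an upper bound. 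A smaller technical point in part (i): the quadratic term in the expansion must be evaluated at the linearized state $\xb$ rather than at $\delta x$, which is the content of Lemmas \ref{lemmaeta} and \ref{lemmasc2}; your treatment of $\omega$ and $\R$ glosses over this substitution.
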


In the absence of the {\em nonlinear control} $u,$ Theorem \ref{SC} was proved in Dmitruk \cite{Dmi77}. In Aronna et al. \cite{ABDL11} the same result was shown for the case of scalar control subject to bounds.

As a consequence of Theorem \ref{SC} and standard results on positive quadratic mappings due to Hestenes \cite{Hes51} we get the following pointwise condition. 
\begin{corollary}
\label{CoroSC}
If $\wh\cd$ satisfies the uniform positivity in \eqref{unifpos} and it has a unique associated multiplier, then the matrix in \eqref{R2} is uniformly positive definite, i.e.
$$
\begin{pmatrix}
H_{uu} & E^\top \\
E & R
\end{pmatrix}
\succeq \rho I,\quad \text{on} \ [0,T],
$$
where $I$ refers to the identity matrix.
\end{corollary}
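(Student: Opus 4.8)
The plan is to derive Corollary \ref{CoroSC} by combining the uniform positivity hypothesis \eqref{unifpos} with the uniqueness of the multiplier and a classical characterization of uniformly positive quadratic forms. Since there is a unique associated multiplier $\hat\lambda$, we have $\Lambda=\{\hat\lambda\}$, so ${\rm co}\,\Lambda=\{\hat\lambda\}$ and in particular $G({\rm co}\,\Lambda)^{\#}\subseteq\{\hat\lambda\}$. The hypothesis \eqref{unifpos} forces the maximum over $G({\rm co}\,\Lambda)^{\#}$ to be bounded below by $\rho\gamma_\P(\cdot)$, which is strictly positive for nonzero arguments; hence $G({\rm co}\,\Lambda)^{\#}$ cannot be empty, and therefore $G({\rm co}\,\Lambda)^{\#}=\{\hat\lambda\}$. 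Consequently \eqref{unifpos} reads simply
\benl
\Omega_{\P_2}[\hat\lambda](\xib,\ub,\yb,\hb)\geq \rho\,\gamma_\P(\xib(0),\ub,\yb,\hb),\quad \text{on}\ \P_2.
\eenl

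First I would isolate the purely $(\ub,\yb)$-dependent part of $\Omega_{\P_2}[\hat\lambda]$, namely the integral quadratic form with density
\benl
\half\ub\tras H_{uu}[\hat\lambda]\ub + \yb\tras E[\hat\lambda]\ub + \half\yb\tras R[\hat\lambda]\yb
=\half(\ub,\yb)\begin{pmatrix} H_{uu} & E^\top \\ E & R\end{pmatrix}(\ub,\yb)\tras,
\eenl
and argue that the remaining terms of $\Omega_{\P_2}$ (the endpoint term $g$, together with the $\xib$-quadratic, the mixed $\ub\tras H_{ux}\xib$ and $\yb\tras M\xib$ contributions) are weakly continuous on the relevant space. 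The strategy is to test \eqref{unifpos} against directions for which the state $\xib$ is negligible: fixing $\xib(0)=0$ and $\hb=0$, the linear equation \eqref{xieq} shows that $\xib\cd$ is an $\mathcal{O}$ of $(\ub,\yb)$ in $L^2$, so the weakly continuous part is dominated by the quadratic form in $(\ub,\yb)$. This reduces the inequality on $\P_2$ to a uniform positivity statement for the integral functional $\int_0^T\half(\ub,\yb)\tras\begin{pmatrix} H_{uu}&E^\top\\E&R\end{pmatrix}(\ub,\yb)\dtt$ on $\U_2\times\V_2$, relative to $\int_0^T(|\ub|^2+|\yb|^2)\dtt$.

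Next I would invoke the Hestenes theory of quadratic forms \cite{Hes51}: an integral quadratic functional of the form $\int_0^T z\tras A(t)z\,\dtt$ with a continuous symmetric coefficient matrix $A(t)$ is uniformly positive on $L^2$ (bounded below by $\rho'\|z\|_2^2$) \emph{if and only if} $A(t)\succeq \rho'' I$ uniformly in $t$ on $[0,T]$, for suitable positive constants. Here the continuity of $A(t)=\begin{pmatrix}H_{uu}&E^\top\\E&R\end{pmatrix}[\hat\lambda]$ is guaranteed by Assumptions \ref{regular} and \ref{SmoothControls}, as already noted after the definitions \eqref{M}--\eqref{g}. Applying this characterization yields exactly the desired pointwise bound $\begin{pmatrix}H_{uu}&E^\top\\E&R\end{pmatrix}\succeq\rho I$ on $[0,T]$ (after relabeling the constant $\rho$).

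The main obstacle I anticipate is the reduction step: one must rigorously justify that restricting to variations with $\xib(0)=0$, $\hb=0$ and controlling $\xib\cd$ via \eqref{xieq} genuinely decouples the $(\ub,\yb)$-quadratic form from the weakly continuous remainder, so that the uniform positivity on $\P_2$ transfers to the integral form on all of $\U_2\times\V_2$ (not merely on the constrained cone). The point is that the constraints \eqref{tlinearconseq}--\eqref{tlinearconsineq} defining $\P_2$ involve only the endpoint data $(\xib(0),\xib(T),\hb)$, so fixing $\xib(0)=0$ and $\hb=0$ leaves a finite-codimension subspace, and an additional perturbation/scaling argument (replacing $(\ub,\yb)$ by $(\ub,\yb)$ supported on small intervals, or rescaling) removes the endpoint constraint on $\xib(T)$ up to a negligible error. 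Once this localization is carried out, the cross terms and the endpoint term are of lower order and Hestenes's theorem applies directly.
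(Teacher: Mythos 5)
Your proposal is correct and follows essentially the same route the paper intends: the paper's ``proof'' is precisely the one-line appeal to Theorem \ref{SC} and Hestenes's characterization of uniformly positive integral quadratic forms, which you have fleshed out (uniqueness forces $G({\rm co}\,\Lambda)^{\#}=\{\hat\lambda\}$, localization to small supports kills the weakly continuous and cross terms, Hestenes gives the pointwise bound). The only caution is that the intermediate remark that $\xib=\mathcal{O}(\|(\ub,\yb)\|_2)$ ``dominates'' the remainder is not by itself sufficient --- those cross terms are of the \emph{same} order --- but the small-support scaling you invoke in the final paragraph is exactly what makes them $o(\|(\ub,\yb)\|_2^2)$ and closes the argument.
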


\begin{remark}
\label{RemarkLC}
Under suitable hypotheses, Goh in \cite{GohThesis} proved that the {\it strengthened generalized Legendre-Clebsch condition} is a consequence of the uniform positivity in \eqref{unifpos} (see Goh \cite[Section 4.8]{GohThesis} and Aronna \cite[Remark 8.2]{Aro11}). Thus, in that situation, the controls can be expressed as smooth functions of the state and costate variable, as was assumed here.
\end{remark}

The remainder of this section is devoted to the proof of Theorem \ref{SC}. Several technical lemmas that are used in the following proof were stated and proved in the Appendix \ref{AppendixSC}. 

\vspace{5pt}

%%%%%%%%%%%%%%%% MAIN PROOF
%%%%%%%%%%%%%%%%%%%%%%%%%%%%%

\noindent{\em Proof of Theorem \ref{SC}.}
%%%%%%%%%%% PART i: SC FOR QUADRATIC GROWTH
%%%%%%%%%%%%%
{\em (i)}  We shall prove that if \eqref{unifpos} holds for
some $\rho>0,$ then $\wh\cd$ satisfies
$\gamma$-growth in the weak sense.
By the contrary, let us assume that the
$\gamma$-growth condition \eqref{qg} is not satisfied.
Consequently, there exists
a sequence of feasible trajectories $\{w_k\cd=(x_k\cd,u_k\cd,v_k\cd)\}$ converging to $\wh\cd$ in the weak sense, such that
\begin{equation}
\label{qgrowth}
\varphi_0(x_k(0),x_k(T))\leq \varphi_0(\xh(0),\xh(T))+o(\gamma_k),
\end{equation}
with 
\benl
(\delta x_k\cd,\ub_k\cd,\vb_k\cd):= w_k\cd-\wh\cd \,\,\, \text{and} \,\,\,
\gamma_k:= \gamma (\delta x_{k}(0),\ub_k\cd,\vb_k\cd).
\eenl
Let $(\xib_k\cd,\ub_k\cd,\yb_k\cd)$ be the transformed directions defined by Goh transformation \eqref{Goht}. We divide the remainder of the proof of item {\em (i)} in the following two steps:
 \begin{itemize}
\item[(A)] First  we prove that the sequence given by 
\benl
(\mathring{\xi}_k\cd,\mathring{u}_k\cd,\mathring{y}_k\cd,\mathring{h}_k):= (\xib_k\cd,\ub_k\cd,\yb_k\cd,\hb_k)/{\sqrt{\gamma_k}}
\eenl
 where $\hb_k:=\yb_k(T),$ contains a weak converging subsequence whose weak limit is an element\\ $(\mathring\xi\cd,\mathring u\cd,\mathring y\cd,\mathring h)$ of $\P_2.$ 
 %This is, $(\tilde u_k\cd,\tilde y_k\cd)\rightharpoonup (\tilde u\cd,\tilde y\cd)$ in the weak topology of $\U_2\times \V_2$ and $(\tilde{\xi}_k\cd,\tilde{h}_k)\rightarrow (\tilde\xi\cd,\tilde h)$ in the strong sense of $\X_2\times \cR^m.$
\item[(B)] 
Afterwards, making use of the latter sequence and its weak limit, we show that the uniform positivity hypothesis \eqref{unifpos} together with \eqref{qgrowth} lead to a contradiction.
\end{itemize}

%%%%%%% PART (a): (\xih,\uh,\yh,\hh) \in \P_2 %%%%%%%%%%%%%

We shall begin by {Part (A).} For this we take an arbitrary Lagrange multiplier $\lambda$ in $(\mr{co}\,\Lambda)^{\#}.$
By multiplying the inequality \eqref{qgrowth} by $\alpha_0,$ and adding the nonpositive term
\be
\sum_{i=1}^{d_{\varphi}}\alpha_i\varphi_i(x_{k}(0),x_{k}(T))+\sum_{j=1}^{d_{\eta}}\beta_j\eta_j(x_{k}(0),x_k(T)),
\ee
to its left-hand side, we get
\begin{equation}
\label{quadlag}
\mathcal{L}[\lambda](w_k)\leq\mathcal{L}[\lambda](\wh)+o(\gamma_k).
\end{equation}

Note that the elements of the sequence $(\mathring\xi_{k}(0),\mathring u_k\cd,\mathring y_k\cd,\mathring h_k)$ have unit $\cR^n\times \U_2\times \V_2\times \cR^m$-norm. The Banach-Alaoglu Theorem (see e.g. Br\'ezis  \cite[Theorem III.15]{Bre83})  implies that, extracting if
necessary a subsequence, there exists
$(\mathring\xi(0),\mathring u\cd,\mathring y\cd,\mathring h)\in \cR^n\times \U_2\times \V_2\times \cR^m$ 
such that
\begin{equation}
\label{limityk}
\mathring\xi_{k}(0)\rightarrow \mathring\xi(0),\quad
\mathring u_k\rightharpoonup \mathring u,\quad
\mathring y_k\rightharpoonup \mathring y,\quad
\mathring h_k\rightarrow \mathring h,
\end{equation}
where the two limits indicated with
$\rightharpoonup$ are considered in the weak topology of
$\U_2$ and $\V_2,$ respectively.
Let $\mathring\xi\cd$ denote the solution of the equation \eqref{xieq} associated with
$(\mathring \xi(0),\mathring u\cd,\mathring y\cd).$ Hence, it follows easily that $\mathring\xi\cd$ is the limit of $\mathring\xi_k\cd$ in (the strong topology of) $\X_2.$ 

With the aim of proving that $(\mathring\xi\cd,\mathring u\cd,\mathring v\cd,\mathring h)$ belongs to $\P_2,$ it remains to check that the linearized endpoint constraints \eqref{tlinearconseq}-\eqref{tlinearconsineq} are verified.
Observe that, for each index $0\leq i\leq d_{\varphi},$ one has
\be 
\label{phineg'}
D\varphi_i(\xh(0),\xh(T)) (\mathring\xi(0),\mathring\xi(T)+B[T]\mathring h)
=  
\lim_{k\rightarrow \infty} D\varphi_i(\xh(0),\xh(T))\left(\frac{\xb_{k}(0),\xb_{k}(T)}{\sqrt{\gamma_k}}\right).
\ee
In order to prove that the right hand-side of \eqref{phineg'} is nonpositive, we consider the following first order Taylor expansion of
 $\varphi_i$ around $(\xh(0),\xh(T)):$ 
\benl
\begin{split}
%\begin{split}
\varphi_i(x_{k}&(0),x_{k}(T))\\
&= 
\varphi_i(\xh(0),\xh(T)) + D \varphi_i(\xh(0),\xh(T)) (\delta x_{k}(0), \delta x_{k}(T)) 
+o(|(\delta x_{k}(0), \delta x_{k}(T))|).
\end{split}
\eenl
Previous equation and Lemmas \ref{lemmaxbar} and \ref{lemmaeta} imply
\benl
\label{taylor_phi}
\varphi_i(x_{k}(0),x_{k}(T))=\varphi_i(\xh(0),\xh(T))+D\varphi_i(\xh(0),\xh(T))(\xb_{k}(0),\xb_{k}(T))+o(\sqrt{\gamma_k}).
\eenl
Thus, the following approximation for the right hand-side of \eqref{phineg'} holds,
\be
\label{difphi}
D\varphi_i(\xh(0),\xh(T)) \left( \frac{\xb_{k}(0),\xb_{k}(T)}{\sqrt{\gamma_k}} \right)
=\frac{\varphi_i(x_{k}(0),x_{k}(T))-\varphi_i(\xh(0),\xh(T))}{\sqrt{\gamma_k}}+o(1).
\ee
Since $w_k\cd$ is a feasible trajectory, it satisfies the final inequality constraint \eqref{finalineq} and, therefore, equations
\eqref{phineg'} and \eqref{difphi} yield, for $1\leq i\leq d_{\varphi},$
$$
D\varphi_i(\xh(0),\xh(T))(\mathring\xi(0),\mathring\xi(T)+B[T]\mathring h)\leq 0.
$$
Now, for $i=0,$ use \eqref{qgrowth} to get the corresponding inequality.
Analogously, one has
\be
\label{eta0}
D\eta_j(\xh(0),\xh(T))(\mathring\xi(0),\mathring\xi(T)+B[T]\mathring h) = 0,\quad \mr{for}\ j=1,\hdots,d_{\eta}.
\ee
Thus $(\mathring\xi\cd,\mathring u\cd,\mathring y\cd,\mathring h)$ satisfies \eqref{tlinearconseq}-\eqref{tlinearconsineq}, and hence it belongs to $\P_2.$

%%%%%%%%%%% PART (b): (\xih,\uh,\yh,\hh) = 0 %%%%%%%%%%%%%

Let us now pass to {Part (B).} 
Notice that from the expansion of $\mathcal{L}$ given in \eqref{taylor0} of Lemma \ref{lemmasc2}, and the inequality \eqref{quadlag} we get
 \be
\Omega_{\P_2} \lam (\mathring\xi_k,\mathring u_k,\mathring y_k,\mathring h_k) \leq o(1),
\ee
and thus
\be 
\label{lims}
\liminf_{k\rightarrow \infty}\, \Omega_{\P_2} \lam (\mathring\xi_k,\mathring u_k,\mathring y_k,\mathring h_k) \leq 0.
\ee
Let us consider the subset of $G({\rm co}\,\Lambda)^{\#}$ defined by
\be
\Lambda^{\#,\rho}:= \{\lambda\in G({\rm co}\,\Lambda)^{\#}: \Omega_{\P_2}\lam - \rho\gamma_\P\ {\rm is}\ {\rm weakly}\,{\rm l.s.c.}\,{\rm on}\ \H_2\times\cR^m \}.
\ee
By applying Lemma \ref{quadform} to the inequality of uniform positivity \eqref{unifpos} one gets
\be
\label{maxLamrho}
\max_{\lambda\in \Lambda^{\#,\rho}} 
 \Omega_{\P_2} \lam (\xib,\ub,\yb,\hb) -
\rho\gamma_\P(\xib(0),\ub,\yb,\hb) \geq 0,\quad \text{on}\
\P_2.
\ee
Let us take the multiplier $\mathring\lambda\in \Lambda^{\#,\rho}$ that attains the maximum in \eqref{maxLamrho} for the direction $(\mathring \xi\cd,\mathring u\cd,\mathring y\cd,\mathring h)$ of $\P_2.$ We get
\be
\begin{split}
0
&\leq 
\Omega_{\P_2}[\mathring\lambda](\mathring \xi,\mathring u,\mathring y,\mathring h) - \rho\gamma_\P(\mathring \xi(0),\mathring u,\mathring y,\mathring h)\\
&\leq
\liminf_{k\rightarrow \infty} \Omega_{\P_2}[\mathring\lambda](\mathring\xi_k,\mathring u_k,\mathring y_k,\mathring h_k) - \rho\gamma_\P(\mathring\xi_{k}(0),\mathring u_k,\mathring y_k,\mathring h_k)
\leq
-\rho,
\end{split}
\ee
since  $\Omega_{\P_2}[\mathring\lambda] - \rho\gamma_\P$ is weakly-l.s.c., $\gamma_\P(\mathring\xi_{k}(0),\mathring u_k,\mathring y_k,\mathring h_k)=1$ for every $k$
and inequality \eqref{lims} holds. 
This leads us to a contradiction since $\rho\gr0.$ Therefore, the desired result follows, this is, the uniform positivity \eqref{unifpos} implies strict weak optimality with $\gamma$-growth.

%%%%%%%%%%% PART 1: NC FOR QUADRATIC GROWTH
%%%%%%%%%%%%%

{\em (ii)} Let us now prove the second statement of the theorem. Assume that $\wh\cd$ is a weak solution satisfying $\gamma$-growth in the weak sense for some constant $\rho'>0,$ and such that $\alpha_0>0$ for every multiplier $\lambda \in G({\rm co}\,\Lambda)^{\#}.$ Let us consider the modified problem
\be\label{tildeP}\tag{$\tilde P$}
\min \{ \varphi_0(x(0),x(T))-\rho' \gamma(x(0)-\xh(0),u\cd-\uh\cd,v\cd-\vh\cd) :  \text{\eqref{stateeq}-\eqref{finalineq}} \},
\ee
and rewrite it in the Mayer form
\be\label{2tildeP}\tag{${\breve P}$}
\begin{split}
\min\,\, &\varphi_0(x(0),x(T))-\rho' \big(|x(0)-\xh(0)|^2 + |y(T)-\yh(T)|^2 +\pi_{1}(T)+\pi_{2}(T)\big),\\
 &\text{\eqref{stateeq}-\eqref{finalineq}},\\
& \dot y=v,\\
& \dot\pi_1 = (u-\uh)^2,\\
& \dot \pi_2 = (y-\yh)^2,\\
& y(0)=0,\ \pi_{1}(0)=0,\ \pi_{2}(0)=0.
\end{split}
\ee
We will next apply the second order necessary condition of Theorem \ref{NCP2} to \eqref{2tildeP} at the point $(w\cd=\wh\cd,y\cd=\yh\cd,\pi_1\cd\equiv0,\pi_2\cd\equiv0).$ Simple computations show that at this solution each critical cone (see \eqref{P}) is the projection of the corresponding critical cone of \eqref{2tildeP}, and that the same holds for the set of multipliers. Furthermore, the second variation of \eqref{2tildeP} evaluated at a multiplier ${\breve \lambda} \in G ({\rm co}\, {\breve \Lambda}^{\#})$ is given by
\be
\Omega_{\P_2}\lam (\xib,\ub,\yb,\yb(T)) - \alpha_0 \rho'\gamma_\P(\xb(0),\ub,\yb,\yb(T)),
\ee
where $\lambda \in G ({\rm co}\, {\Lambda})^{\#}$ is the corresponding multiplier for problem \eqref{P}. 
Hence, the necessary condition in Theorem \ref{NCP2} (see Remark \ref{NCt} below) implies that for every $(\xib\cd,\ub\cd,\vb\cd,\hb)\in \P_2,$ there exists $\lambda \in G ({\rm co}\, {\Lambda})^{\#}$ such that
\benl
\Omega_{\P_2}\lam (\xib,\ub,\yb,\yb(T)) - \alpha_0 \rho'\gamma_\P(\xb(0),\ub,\yb,\yb(T)) \geq 0.
\eenl
Setting $\ds\rho:= \min_{G ({\rm co}\, {\Lambda})^{\#}} \alpha_0 \rho' >0$ the desired result follows.
This completes the proof of the theorem.
\findem

\begin{remark}\label{NCt}
Since the dynamics of \eqref{2tildeP} are not autonomous, what we applied above is an extension of Theorem \ref{NCP2} to time-dependent dynamics. The latter follows easily by adding a state variable $\kappa$ with dynamics $\dot\kappa = 1$ and $\kappa(0)=0.$ 
\end{remark}

\section{Example}\label{SectionExample}

We consider the following example from Dmitruk-Shishov \cite{DmiShi10}:
\be
\label{Pexample}\tag{PE}
\begin{split}
 \min\,\, & -2x_1(T)x_2(T)+x_3(T),\\
 & \dot x_1 = x_2+u,\\
 & \dot x_2 = v,\\
 & \dot x_3 = x_1^2 + x_2^2 + x_2 v+u^2\\
 & x_1(0) = x_2(0) = x_3(0) =0.
 \end{split}
 \ee
 Let us use $p_1,p_2,p_3$ to denote the costate variables associated to \eqref{Pexample}. Observe that $\dot p_3\cd\equiv 0$ and $p_3(T)=1,$ thus  $p_3\cd\equiv 1.$ Note as well that the linearized state equation implies $\dot\xb_2=\vb,\, \xb_1(0)=\xb_2(0)=\xb_3(0)=0.$ Consequently, $\yb\cd=\xb_2\cd,$  $\xib_1(0)=\xib_2(0)=\xib_3(0)=0,$ and
 $$
 \xib_2\cd=\xb_2\cd-\yb\cd \equiv 0,
 $$ 
where the first equality follows from Goh's transformation \eqref{Goht}.

Recalling the definitions given in \eqref{M}-\eqref{g}, the second variation $\Omega_{\P_2}$ (defined in \eqref{OmegaP2}) on the critical cone $\P_2$ of \eqref{Pexample} gives:
\be
\Omega_{\P_2}(\xb,\ub,\yb,\hb) =  \half \hb^2+\int_0^T (\xb_1^2+\ub^2+\yb^2)\dtt.
\ee
We see that $\Omega_{\P_2}$ verifies the sufficient condition \eqref{unifpos}. We should now look for a feasible solution that verifies the first order optimality conditions.

In Aronna \cite{Aro13} we used the {\em shooting algorithm}  to solve problem \eqref{Pexample} numerically. The numerical tests converged to the optimal solution $(\uh,\vh)\cd\equiv 0$ for arbitrary guesses of the initial values of the costate variables. It is inmediate to check that 
$\wh\cd\equiv 0$ is a feasible trajectory that verifies the first order optimality conditions. Since the second variation at this $\wh$ verifies the sufficient condition of Theorem \ref{SC}, we conclude that $\wh\cd$ is a strict weak optimal trajectory that satisfies $\gamma$-growth.

\section{Conclusion and possible extensions}\label{SectionConclusion}

We studied optimal control problems in the Mayer form governed by systems that are affine in some components of the control variable. 
A set of `no gap' necessary and sufficient second order  optimality conditions was provided. These conditions apply to a weak minimum, consider fairly general endpoint constraints and do not assume  uniqueness of multiplier.
We further derived the Goh conditions when we assume uniqueness of multiplier.
%For qualified solutions, we proposed a shooting algorithm and proved that its local convergence is guaranteed by the sufficient condition above-mentioned.

The main result of the article is Theorem \ref{SC}. The interest of this result is that it can be applied either to prove optimality of some candidate solution of a given problem, or to show convergence of an associated shooting algorithm as stated in Aronna \cite{Aro13} and proved in the detail in the technical report Aronna \cite{Aro11}. This algorithm and its proof of convergence apply also to partially-affine problems with bounds on the control and bang-singular solutions, and hence its convergence has strong  practical interest.

The results here presented can be pursued by many interesting extensions. One of the most important extensions are the optimality conditions for bang-singular solutions for problems containing closed control constraints.

\section*{Acknowledgments}
Part of this work was done during my Ph.D. 
under the supervision of
Fr\'ed\'eric Bonnans, who I thank for the great guidance.

I also acknowledge the anonymous referee for his careful reading and useful remarks.

\appendix

\section{Proofs of technical results}
We include in this part the proofs that were omitted throughout the article.

\subsection{}\label{proofexpansionlagrangian}

\noindent{\em Proof of Lemma \ref{expansionlagrangian}.}
We shall omit the dependence on $\lambda$ for the sake of
simplicity of notation. 
Let us consider the following second
order Taylor expansions, written in a compact form, 
\begin{gather} 
\label{expell} 
 \ell (x(0),x(T))
= \ell + {D\ell} (\delta x(0),\delta x(T))
+ \half {D^2\ell} (\delta x(0),\delta x(T))^2
+ L_\ell|(\delta x(0),\delta x(T))|^3, 
\\
\label{expfi}
 f_i(x,u) = f_{i} + {D f_{i}} (\delta x,\delta u)
+ \half {D^2 f_{i}} (\delta x,\delta u)^2
+L |(\delta x,\delta u) |^3.
\end{gather}
Observe that, in view of the transversality conditions \eqref{transvcond} and the costate equation \eqref{costateeq}, one has
\be
\label{Dell}
\begin{split}
{D\ell}\,&(\delta x(0),\delta x(T)) 
= -p(0)\, \delta x(0) + p(T) \, \delta x(T) \\
&= \intT \left[ \dot{p}\, \delta x + p \dot{\delta x} \right] \dtt =
\intT p\,\Big[-\big({D_xf_i}+\sum_{i=1}^m\vh_i {D_xf_i}\big)  \delta x +\dot{\delta x}\Big] \dtt.
\end{split}
\ee
In the definition of $\mathcal{L}$ given in \eqref{lagrangian}, replace $\ell(x(0),x(T))$ and $f_i(x,u)$  by their Taylor
expansions \eqref{expell}-\eqref{expfi} and use the identity \eqref{Dell}. This  yields
\begin{align*}
\mathcal{L}(w) 
=&\,\,\mathcal{L}(\wh) + \ds \intT \big[ H_u\delta u + H_v\delta v \big]\dtt + \Omega(\delta x,\delta
u,\delta v) \\
&+ \ds \intT \big[ H_{vxx}(\delta x, \delta x,\delta v) 
+ 2H_{vux}(\delta x,\delta u,\delta v) 
+ H_{vuu}(\delta u,\delta u,\delta v) \big] \dtt \\
&+ \, L_\ell |(\delta x(0),\delta x(T))|^3
+ L(1+\|v\|_\infty)\,\|(\delta x,\delta u) \|_{\infty} \ds\intT p\,|(\delta x,\delta u) |^2 \dtt.
\end{align*}
Finally, to obtain \eqref{expansionLag}, remove the first order terms by the stationarity conditions \eqref{stationarity}, and use the Cauchy-Schwarz inequality in the last integral.
This completes the proof.
\findem

\subsection{Proof of Theorem \ref{classicalNC}}\label{AppendixclassicalNC}

Let us write problem (P) in an {\em abstract} form defining, for $j=1,\dots,d_\eta$ and $i=0,\dots,d_\varphi,$
\begin{gather*}
\bar{\eta}_j:\cR^n\times \U \times \V \rightarrow \cR,\quad (x(0),u\cd,v\cd)\mapsto \bar{\eta}_j(x(0),u\cd,v\cd):=\eta_j(x(0),x(T)),\\
\bar{\varphi}_i:\cR^n\times \U \times \V \rightarrow \cR,\quad (x(0),u\cd,v\cd)\mapsto \bar{\varphi}_i(x(0),u\cd,v\cd):=\varphi_i(x(0),x(T)),
\end{gather*}
where $x\cd\in\X$ is the solution of \eqref{stateeq} associated with $(x(0),u\cd,v\cd).$ Hence, (P) can be written as the following problem in the space $\cR^n\times \U \times \V,$
\be
\label{AP}\tag{AP}
\begin{split}
\min\,\,&\bar\varphi_0(x(0),u,v);\\
\,\,\text{s.t.}\,\,
&\bar\eta_j(x(0),u,v)=0,\ \text{for}\ j=1,\dots,d_{\eta},\\
&\bar\varphi_i(x(0),u,v)\leq 0,\ \text{for}\ j=1,\dots,d_{\varphi}.
\end{split}
\ee
Notice that if $\wh\cd$ is a weak solution of (P) then $(\xh(0),\uh\cd,\vh\cd)$ is a local solution of (AP). 

\begin{definition} 
\label{DefCQ}
We say that the endpoint equality constraints are {\it qualified} if
\be\label{QC}
D\bar\eta(\xh(0),\uh,\vh)\ \text{is onto from}\ \cR^n\times \U\times\V\ \text{to}\ \cR^{d_{\eta}}.
\ee
When \eqref{QC} does not hold, the constraints are {\em not qualified} or {\em unqualified}.
\end{definition}

The proof of Theorem \ref{classicalNC} is divided in two cases: qualified and not qualified endpoint equality constraints. In the latter case the condition \eqref{classicalNCeq} follows easily and it is shown in Lemma \ref{degNC} below. The proof for the qualified case is done by means of an auxiliary  linear problem and duality arguments.

\begin{lemma}\label{degNC}
If the equality constraints are not qualified then \eqref{classicalNCeq} holds.
\end{lemma}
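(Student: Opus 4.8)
The plan is to exploit non-qualification to manufacture a pair of \emph{degenerate} multipliers (those with $\alpha=0$) that are negatives of one another, so that the linearity of $\Omega\lam$ in $\lambda$ forces the maximum in \eqref{classicalNCeq} to be nonnegative \emph{pointwise} on $\C$. First, since the equality constraints are not qualified, by Definition \ref{DefCQ} the bounded linear map $D\bar\eta(\xh(0),\uh,\vh)\colon\cR^n\times\U\times\V\to\cR^{d_\eta}$ fails to be onto. Its range is a proper subspace of the finite-dimensional space $\cR^{d_\eta}$, hence possesses a nonzero annihilator: there exists $\beta\in\cR^{d_\eta,*}$ with $|\beta|=1$ such that $\beta\,D\bar\eta(\xh(0),\uh,\vh)=0$ as a functional on $\cR^n\times\U\times\V$.

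Next I would set $\alpha:=0$, take $\ell:=\beta\cdot\eta$ as endpoint Lagrangian, define $p\cd$ as the solution of the costate equation \eqref{costateeq} with terminal value $p(T)=D_{x_T}\ell$, and verify that $\lambda:=(0,\beta,p\cd)$ lies in $\Lambda$. The crux is the adjoint identity: for any $(\xb(0),\ub,\vb)$ with $\xb\cd$ solving \eqref{lineareq}-\eqref{lineareq0}, integration by parts together with $-\dot p=pF_x$ gives
\begin{equation*}
\beta\,D\bar\eta(\xh(0),\uh,\vh)(\xb(0),\ub,\vb)
=\big(D_{x_0}\ell+p(0)\big)\xb(0)+\intT\big(H_u\lam\,\ub+H_v\lam\,\vb\big)\dtt .
\end{equation*}
Since the left-hand side vanishes for every direction, varying $\xb(0)$ alone (with $\ub=\vb=0$) yields the initial transversality condition $p(0)=-D_{x_0}\ell$, while varying $(\ub,\vb)$ and invoking the fundamental lemma of the calculus of variations yields the stationarity conditions $H_u\lam=H_v\lam=0$ a.e. Together with $\alpha=0\geq0$ and $|\alpha|+|\beta|=1$, this places $\lambda$ in $\Lambda$ (Definition \ref{DefMul}). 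Because $p$ depends linearly on $(\alpha,\beta)$, the choice $-\beta$ produces exactly $-\lambda=(0,-\beta,-p\cd)$, which by the same argument also belongs to $\Lambda$.

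Finally I would conclude using the oddness of $\Omega$ in $\lambda$. Inspecting \eqref{Omega}, the map $\lambda\mapsto\Omega\lam$ is linear, since it is linear in $D^2\ell\lam$ (linear in $(\alpha,\beta)$) and in the Hessians of $H\lam$ (linear in $p$). Hence $\Omega[-\lambda]=-\Omega\lam$, and for every $(\xb,\ub,\vb)\in\C$,
\begin{equation*}
\max_{\mu\in\Lambda}\Omega[\mu](\xb,\ub,\vb)
\geq\max\big\{\Omega\lam(\xb,\ub,\vb),\,\Omega[-\lambda](\xb,\ub,\vb)\big\}
=\big|\Omega\lam(\xb,\ub,\vb)\big|\geq0,
\end{equation*}
which is precisely \eqref{classicalNCeq}.

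The step I expect to be the main obstacle is the bookkeeping in the adjoint identity: one must check that the \emph{single} operator equation $\beta\,D\bar\eta=0$ simultaneously encodes the initial transversality condition and both stationarity conditions, the terminal transversality and the costate equation being used only to \emph{define} $p$. Everything else — extracting the annihilator from non-surjectivity, the linearity of $\lambda\mapsto\Omega\lam$, and the $\pm\beta$ symmetry — is routine.

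\findem
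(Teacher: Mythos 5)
Your proof is correct, and its skeleton coincides with the paper's: extract a unit annihilator $\beta$ of the range of $D\bar\eta(\xh(0),\uh,\vh)$, build from it a degenerate multiplier $\lambda$ with $\alpha=0$, observe that $-\lambda\in\Lambda$ as well, and conclude from the linearity of $\lambda\mapsto\Omega\lam$ that the maximum in \eqref{classicalNCeq} dominates $|\Omega\lam|\geq0$ on $\C$. The one substantive difference is how the multiplier is completed. The paper asserts that $\beta\,D\bar\eta=0$ ``consequently'' gives $\sum_j\beta_j D\eta_j(\xh(0),\xh(T))=0$ as a functional on all of $\cR^{2n}$, and then simply takes $p\cd\equiv0$, which makes the costate equation, transversality and stationarity trivial and reduces $\Omega\lam$ to the single term $\half\sum_j\beta_j D^2\eta_j(\xh(0),\xh(T))(\xb(0),\xb(T))^2$. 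You instead define $p\cd$ by backward integration of the costate equation from $p(T)=D_{x_T}(\beta\cdot\eta)$ and recover the initial transversality condition and the stationarity conditions from the adjoint identity $\beta\,D\bar\eta(\xb(0),\ub,\vb)=(D_{x_0}\ell+p(0))\xb(0)+\intT(H_u\lam\ub+H_v\lam\vb)\dtt$ — exactly the computation the paper itself performs in the qualified case around \eqref{DM5}--\eqref{DM6}. Your route is slightly longer but only uses the hypothesis $\beta\,D\bar\eta=0$ on $\cR^n\times\U\times\V$; it does not need the stronger claim that the full endpoint gradient of $\beta\cdot\eta$ vanishes, which as stated would require the attainable pairs $(\xb(0),\xb(T))$ of the linearized system to span $\cR^{2n}$ (a controllability-type property the paper does not invoke). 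In that sense your construction is the more robust version of the same argument; both proofs finish identically with the $\pm\lambda$ symmetry.
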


\begin{proof}
Observe that since $D\bar\eta(\xh(0),\uh,\vh)$ is not onto there exists $\beta\in\cR^{d_{\eta},*}$ with $|\beta|=1$ such that $\sum_{j=1}^{d_{\eta}} \beta_j D\bar\eta_j(\xh(0),\uh,\vh)=0$ and consequently, 
\benl
\sum_{j=1}^{d_{\eta}} \beta_j D\eta_j(\xh(0),\xh(T))=0.
\eenl
 Set $\lambda:=(p\cd,\alpha,\beta)$ with $p\cd\equiv 0$ and $\alpha=0.$ Then both $\lambda$ and $-\lambda$ are in $\Lambda.$ Observe that \benl
\Omega\lam(\xb,\ub,\vb)=
\half \sum_{j=1}^{d_{\eta}} \beta_j D^2\eta_j(\xh(0),\xh(T))(\xb(0),\xb(T))^2.
\eenl
Thus, either $\Omega\lam(\xb,\ub,\vb)$ or $\Omega[-\lambda](\xb,\ub,\vb)$  is necessarily nonnegative. The desired result follows.
\end{proof}

Let us now deal with the qualified case. Take a critical direction $\wb\cd=(\xb,\ub,\vb)\cd\in \C$ and consider the problem in the variables $\tau\in\cR$ and $r=(r_{x_0},r_u,r_v)\in\cR^n\times \U\times \V$ given by
\be\label{QPw}\tag{QP$_{\wb}$}
\begin{split}
\min\,\, &\tau \\
\text{s.t.}\,\,\, & D\bar\eta(\xh(0),\uh,\vh) r +D^2\bar\eta(\xh(0),\uh,\vh)(\xb(0),\ub,\vb)^2 =0,\\
& D\bar\varphi_i(\xh(0),\uh,\vh) r +D^2\bar\varphi_i(\xh(0),\uh,\vh)(\xb(0),\ub,\vb)^2 \leq \tau,\ \ i=0,\dots,d_{\varphi}.
\end{split}
\ee

\begin{proposition}\label{dualpb}
Assume that $\wh\cd$ is a weak solution of \eqref{AP} for which the endpoint equality constraints are qualified. Let $\wb\cd\in \C$ be a critical direction. Then the problem  \eqref{QPw} is feasible and has nonnegative value.
\end{proposition}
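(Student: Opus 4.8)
The plan is to treat the two assertions separately, using surjectivity of $D\bar\eta(\xh(0),\uh,\vh)$ for feasibility and a second-order variation argument (together with a Lyusternik-type correction) for the nonnegativity of the value.

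Feasibility is the easy part. Since by hypothesis \eqref{QC} holds, the linear map $D\bar\eta(\xh(0),\uh,\vh)$ is onto $\cR^{d_\eta}$, so I can choose $r=(r_{x_0},r_u,r_v)\in\cR^n\times\U\times\V$ with
\[
D\bar\eta(\xh(0),\uh,\vh)\,r = -D^2\bar\eta(\xh(0),\uh,\vh)(\xb(0),\ub,\vb)^2,
\]
which settles the equality constraint. As the inequalities range over the finite set $i=0,\dots,d_\varphi$, I then take $\tau$ larger than $\max_i\big(D\bar\varphi_i(\xh(0),\uh,\vh)\,r + D^2\bar\varphi_i(\xh(0),\uh,\vh)(\xb(0),\ub,\vb)^2\big)$. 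This produces an admissible pair $(\tau,r)$, so \eqref{QPw} is feasible.

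For the nonnegativity I would argue by contradiction: suppose there is an admissible pair with $\tau<0$. Writing $z_0:=(\xh(0),\uh,\vh)$ and $a:=(\xb(0),\ub,\vb)$, I consider the arc $z(s):=z_0+s\,a+\tfrac{s^2}{2}\,r$ in $\cR^n\times\U\times\V$ and Taylor-expand $\bar\eta_j$ and $\bar\varphi_i$ along it. Since $\wb\cd\in\C$ is critical, the first-order terms $D\bar\eta_j(z_0)a$ vanish and $D\bar\varphi_i(z_0)a\le 0$; plugging the admissibility relations of \eqref{QPw} into the second-order coefficients and using the Lipschitz second derivatives of the data (Assumption \ref{regular}) to bound the remainder by $O(s^3)$, I get $\bar\eta_j(z(s))=o(s^2)$ and $\bar\varphi_i(z(s)) \le \varphi_i(\xh(0),\xh(T)) + s\,D\bar\varphi_i(z_0)a + \tfrac{s^2}{2}\tau + o(s^2)$ for all $i$. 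To restore exact feasibility of the equality constraints I apply a generalized implicit-function (Lyusternik) argument: as $D\bar\eta(z_0)$ is onto, there is $C>0$ with $\dist(z,\bar\eta^{-1}(0))\le C|\bar\eta(z)|$ near $z_0$, hence a corrected arc $z'(s)$ with $\bar\eta(z'(s))=0$ and $|z'(s)-z(s)|=o(s^2)$. A first-order expansion then yields $\bar\varphi_i(z'(s))=\bar\varphi_i(z(s))+o(s^2)$. Because $z'(s)\to z_0$, for small $s$ the controls stay inside $U\times V$ (Assumption \ref{uvboundary}) and the associated state stays in the weak neighborhood. Since $\varphi_i(\xh(0),\xh(T))=0$ for $i\ge1$, $D\bar\varphi_i(z_0)a\le0$ and $\tau<0$, for small $s>0$ every constraint $\bar\varphi_i(z'(s))\le0$ holds and the cost satisfies $\bar\varphi_0(z'(s))<\varphi_0(\xh(0),\xh(T))$, the strict decrease being forced by $\tau<0$ even when $D\bar\varphi_0(z_0)a=0$. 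This contradicts the weak minimality of $\wh\cd$, so \eqref{QPw} has nonnegative value.

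The main obstacle is the infinite-dimensional second-order Taylor analysis: one must check that all remainders are genuinely $o(s^2)$, which is precisely where the Lipschitz continuity of the second derivatives enters, and one must perform the Lyusternik correction so that it moves the point by only $o(s^2)$ while keeping it both in the weak neighborhood and inside $U\times V$. Once these estimates are secured, the sign bookkeeping for the inequalities (separating the indices with $D\bar\varphi_i(z_0)a<0$ from those with $D\bar\varphi_i(z_0)a=0$) is routine. \findem
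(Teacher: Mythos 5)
Your proposal is correct and follows essentially the same route as the paper: feasibility via surjectivity of $D\bar\eta(\xh(0),\uh,\vh)$ plus a finite max over the inequality indices, and nonnegativity by contradiction along the quadratic arc $z_0+\sigma a+\tfrac{1}{2}\sigma^2 r$, with a Taylor expansion exploiting criticality of $\wb\cd$ and an implicit-function (Lyusternik) correction of size $o(\sigma^2)$ to restore the equality constraints, contradicting weak minimality. The only cosmetic difference is that you keep the nonpositive first-order terms $D\bar\varphi_i(z_0)a$ explicit, whereas the paper absorbs them directly into the inequality $\bar\varphi_i(\tilde r(\sigma))\le\bar\varphi_i(\xh(0),\uh,\vh)+\tfrac{1}{2}\sigma^2\tau+o(\sigma^2)$.
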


\noindent{\em Proof of Proposition \ref{dualpb}.}
Step I. Let us first show feasibility. Since $D\bar\eta(\xh(0),\uh,\vh)$ is onto, there exists $r\in \cR^n\times \U\times \V$ for which the equality constraint in \eqref{QPw} is satisfied. Set
\be
\label{zeta1}
\tau:=\max_{0\leq i \leq d_{\varphi}}\{D\bar\varphi_i(\xh(0),\uh,\vh) r +D^2\bar\varphi_i(\xh(0),\uh,\vh)(\xb(0),\ub,\vb)^2 \}.
\ee
Then $(\tau,r)$ is feasible for \eqref{QPw}.

Step II. Let us now prove that \eqref{QPw} has nonnegative value. Suppose on the contrary that there is $(\tau,r)\in \cR\times\cR^n\times \U\times \V$ feasible for \eqref{QPw} with $\tau<0.$ 
We shall look for a family of feasible solutions of \eqref{AP} referred as $\{r(\sigma)\}_{\sigma}$ with the following properties:  it is defined for small positive values of $\sigma$ and  it satisfies 
\be\label{estrsigma}
r(\sigma)\underset{\sigma\to0}{\longrightarrow} (\xh(0),\uh,\vh)\ \text{in} \  \cR^n\times \U\times \V,\ \text{and}\ 
\bar\varphi_0(r(\sigma) )< \bar\varphi_0(\xh(0),\uh,\vh).
\ee
The existence of such family $\{r(\sigma)\}_{\sigma}$ will contradict the local optimality of $(\xh(0),\uh,\vh).$
Consider hence
\benl
\tilde{r}(\sigma):= (\xh(0),\uh,\vh)+\sigma(\xb(0),\ub,\vb)+\half\sigma^2 r.
\eenl
Let $0\leq i\leq d_{\varphi}$ and observe that 
\be\label{estvarphi}
\begin{split}
\bar\varphi_i(\tilde{r}(\sigma))
=&
\,\bar\varphi_i(\xh(0),\uh,\vh)+\sigma D\bar\varphi_i(\xh(0),\uh,\vh)(\xb(0),\ub,\vb)\\
&+\half\sigma^2\left[ D\bar\varphi_i(\xh(0),\uh,\vh)r+D^2 \bar\varphi_i(\xh(0),\uh,\vh)(\xb(0),\ub,\vb)^2 \right] + o(\sigma^2)\\
\leq&\,\bar\varphi_i(\xh(0),\uh,\vh)+\half\sigma^2\tau+o(\sigma^2),
\end{split}
\ee
where last inequality holds since $(\xb,\ub,\vb)\cd$ is a critical direction and in view of the definition of $\tau$  in \eqref{zeta1}.
Analogously, one has
\benl
\bar\eta(\tilde{r}(\sigma))=o(\sigma^2).
\eenl
Since $D\bar\eta(\xh(0),\uh,\vh)$ is onto, there exists $r(\sigma) \in \cR\times \U\times \V$ such that $\| r(\sigma)-\tilde{r}(\sigma)\|_{\infty}=o(\sigma^2)$ and $\bar\eta(r(\sigma))=0.$
This follows by applying the Implicit Function Theorem to the mapping
\benl
(r,\sigma)\mapsto \bar\eta\left((\xh(0),\uh,\vh)+\sigma(\xb(0),\ub,\vb)+\half\sigma^2 r\right)=\bar\eta(\tilde{r}(\sigma)).
\eenl
On the other hand, by taking $\sigma$ sufficiently small 
in estimate \eqref{estvarphi}, we obtain 
\benl
\bar\varphi_i({r}(\sigma))<\bar\varphi_i(\xh(0),\uh,\vh),
\eenl
since $\tau<0.$ Hence $r(\sigma)$ is feasible for \eqref{AP} and  verifies \eqref{estrsigma}. This contradicts the optimality of $(\xh(0),\uh,\vh).$ We conclude then that all the feasible solutions of \eqref{QPw} have $\tau\geq 0$ and, therefore, its value is nonnegative.

\findem

We shall now proceed to prove Theorem \ref{classicalNC}.

\noindent{\em Proof of Theorem \ref{classicalNC}.}
The unqualified case is covered by Lemma \ref{degNC} above. Hence, for this proof, assume that \eqref{QC} holds.

Given $\wb\cd\in \C, $ note that \eqref{QPw} can be regarded as a linear problem in the variables $(\zeta, r),$ whose associated dual  is given by 
\begin{align}
\label{dualQP1}
\max_{(\alpha,\beta)}\,\,\, &\sum_{i=0}^{d_{\varphi}} \alpha_i D^2\bar\varphi_i(\xh(0),\uh,\vh)(\xb(0),\ub,\vb)^2 + \sum_{j=1}^{d_{\eta}} \beta_j D^2\bar\eta_j(\xh(0),\uh,\vh)(\xb(0),\ub,\vb)^2 \\
\label{dualQP2}
{\rm s.t.}\,\,\, & \sum_{i=0}^{d_{\varphi}} \alpha_i D\bar\varphi_i(\xh(0),\uh,\vh)+\sum_{j=1}^{d_{\eta}} \beta_j D\bar\eta_j(\xh(0),\uh,\vh)=0,\\
\label{dualQP3}
&\sum_{i=0}^{d_{\varphi}} \alpha_i=1,\quad \alpha \geq 0.
\end{align}
The Proposition \ref{dualpb} above and the linear duality result  Bonnans \cite[Theorem 3.43]{BonOC} imply that \eqref{dualQP1}-\eqref{dualQP3} has finite nonnegative value  (the reader is referred to Shapiro \cite{Sha01} and references therein for a general theory on linear duality). Consequently, there exists a feasible solution $(\bar\alpha,\bar\beta)\in \cR^{d_\varphi+d_\eta+1}$ to \eqref{dualQP1}-\eqref{dualQP3}, with associated nonnegative and finite value. Set $(\alpha,\beta) := (\bar\alpha,\bar\beta)/(\sum_{i=0}^{d_{\varphi}} |\bar\alpha_i|+\sum_{j=1}^{d_{\eta}} |\bar\beta_j|),$ where the denominator is not zero in view of \eqref{dualQP3}. We get that $(\alpha,\beta)\in \cR^{d_\varphi+d_\eta+1}$ verifies \eqref{nontriv}-\eqref{alphapos}, \eqref{dualQP2} and 
\be
\label{DM2}
\sum_{j=1}^{d_{\eta}} \beta_j D^2\bar\eta_j(\xh(0),\uh,\vh)(\xb(0),\ub,\vb)^2+ \sum_{i=0}^{d_{\varphi}} \alpha_i D^2\bar\varphi_i(\xh(0),\uh,\vh)(\xb(0),\ub,\vb)^2 \geq0.
\ee
\if{
implies that there cannot exist $(\tau,r)\in \cR\times \cR^n\times \U\times \V$ such that
\benl
\left\{
\begin{split}
&D\bar\eta(\xh(0),\uh,\vh) r +D^2\bar\eta(\xh(0),\uh,\vh)(\xb(0),\ub,\vb)^2 =0,\\
&D\bar\varphi_i(\xh(0),\uh,\vh) r +D^2\bar\varphi_i(\xh(0),\uh,\vh)(\xb(0),\ub,\vb)^2 \leq \tau,\ \text{for}\ i=0,\dots,d_{\varphi},\\
&\tau <0.
\end{split}
\right.
\eenl
Therefore, the Dubovitskii-Milyutin Theorem \cite{DubMil} guarantees the existence of $(\alpha,\beta)\in\cR^{1+d_{\varphi}+d_{\eta}}$ such that $(\alpha,\beta)\neq0,$ $\alpha\geq 0$ and 
\begin{gather}
\label{DM1}
\ds\sum_{i=0}^{d_{\varphi}} \alpha_i D\bar\varphi_i(\xh(0),\uh,\vh) + \sum_{j=1}^{d_{\eta}} \beta_j D\bar\eta_j(\xh(0),\uh,\vh) =0,\\
\label{DM2}
\ds\sum_{i=0}^{d_{\varphi}}\alpha_i D^2\bar\varphi_i(\xh(0),\uh,\vh)(\xb(0),\ub,\vb)^2 + \sum_{j=1}^{d_{\eta}} \beta_j D^2\bar\eta_j(\xh(0),\uh,\vh)(\xb(0),\ub,\vb)^2 \geq 0
\end{gather}
}\fi
For this $(\alpha,\beta),$ let $p\cd$ be the solution of \eqref{costateeq}
with final condition
\be
\label{DM5}
p(T)=\sum_{i=0}^{d_{\varphi}} \alpha_i D_{x_T}\varphi_i(\xh(0),\xh(T)) + \sum_{j=1}^{d_{\eta}} \beta_j D_{x_T}\eta_j(\xh(0),\xh(T)).\ee
We shall prove that $\lambda := (\alpha,\beta,p\cd)$ is in $\Lambda,$ i.e. that also the first line in \eqref{transvcond} and the stationarity conditions \eqref{stationarity} hold. Let $(\tilde{x},\tilde{u},\tilde{v})\cd\in\W$ be the solution of the linearized state equation \eqref{lineareq}. In view of \eqref{dualQP2},
\be
\label{DM4}
\sum_{i=0}^{d_{\varphi}} \alpha_i D\bar\varphi_i(\xh(0),\uh,\vh)(\tilde{x}(0),\tilde{u},\tilde{v}) + \sum_{j=1}^{d_{\eta}} \beta_j D\bar\eta_j(\xh(0),\uh,\vh)(\tilde{x}(0),\tilde{u},\tilde{v}) =0,
\ee
Hence, rewriting in terms of the endpoint Lagrangian $\ell$ and using \eqref{DM5}-\eqref{DM4}, one has
\benl
\label{DM3}
%\begin{split}
0 = D\ell \lam (\xh(0),\xh(T))(\tilde{x}(0),\tilde{x}(T)) = D_{x_0}\ell \lam (\xh(0),\xh(T))\tilde{x}(0) + p(T) \tilde{x}(T) \pm p(0) \tilde{x}(0).
%\end{split}
\eenl
By regrouping terms in the previous equation, we get
\be
\label{DM6}
\begin{split}
0&= \Big( D_{x_0}\ell\lam(\xh(0),\xh(T))+p(0)\Big) \tilde{x}(0) + \intT (\dot{p}\tilde{x} +p \dot{\tilde{x}} ) \dtt \\
&= \Big( D_{x_0}\ell\lam(\xh(0),\xh(T))+p(0)\Big) \tilde{x}(0) + \intT \big( H_u\lam \tilde{u} + H_v\lam \tilde{v} \big) \dtt,
\end{split}
\ee
where we used  \eqref{costateeq} and \eqref{lineareq} in the last equality. Since \eqref{DM6} holds for all $(\tilde{x}(0),\tilde{u}\cd,\tilde{v}\cd)$ in $\cR^n \times \U\times \V,$ the first line in \eqref{transvcond} and the stationarity conditions in \eqref{stationarity} are necessarily verified.
Thus, $\lambda$ is an element of $\Lambda.$ On the other hand, simple computations yield that  \eqref{DM2} is equivalent to
\benl
\Omega\lam (\xb,\ub,\vb)\geq0,
\eenl
and, therefore, the result follows.

\findem

\subsection{} {\em Proof of Lemma \ref{Omegat}.}\label{AppendixOmegat}
First recall that the term $\vb\tras H_{vu}\lam \ub$ in $\Omega\lam$ vanishes since we are taking $\lambda\in \Lambda^{\#}$ and, in view of Lemma \ref{Lambdawlsc}, $H_{vu}\lam \equiv 0.$
In the remainder of the proof we omit the dependence on $\lambda$ for the sake of
simplicity.
Replacing $\xb$ in the definition of $\Omega$ in equation  \eqref{Omega} by its expression in
\eqref{Goht} yields
\be
\label{J2}
\begin{split}
\Omega & (\xb,\ub,\vb) =\\
&\, \half\ell''(\xh(0),\xh(T))\big(\xib(0),\xib(T)+F_{v}[T]\,\yb(T)\big)^2
+ \ds \int_0^T \left[ \half(\xib+F_v\,\yb)\tras H_{xx}(\xib+F_v\,\yb) 
\right. \\
& \left. +\,\ub\tras H_{ux} (\xib+F_v\,\yb)  + \vb\tras H_{vx} (\xib+F_v\,\yb) + \half\ub\tras H_{uu}\,\ub \right] \dtt.
\end{split}
\ee
In view of \eqref{xieq} one gets
\be
\label{J2.2.1}
\int_0^T \vb\tras H_{vx}\, \xib \dtt
=[\yb\tras H_{vx}\, \xib]_0^T -\int_0^T \yb\tras
\{\dot{H}_{vx}\,\xib+H_{vx}(F_x\,\xib+F_u\,\ub+B\,\yb)\} \dtt.
\ee
The decomposition of $H_{vx}\,F_v$ introduced  in \eqref{SV} followed by an integration by parts leads to  
\be\label{J2.2.2}
\begin{split}
\int_0^T \vb\tras H_{vx}\,F_v \yb \dtt
&=\int_0^T \vb\tras (S+G) \yb \dtt\\
&=\half[\yb\tras S\yb]_0^T+\int_0^T(-\half \yb\tras
\dot S\yb + \vb\tras G\yb )\dtt.
\end{split}
\ee
The result follows by replacing using \eqref{J2.2.1} and \eqref{J2.2.2} in \eqref{J2}.

\findem

\section{Technical lemmas used in the proof of the main Theorem \ref{SC}}\label{AppendixSC}

Recall first the following classical result for ordinary differential equations.
\begin{lemma}[Gronwall's Lemma]
\label{GronLem}
Let $a\cd\in W^{1,1}(0,T;\cR^n),$ $b\cd\in L^1(0,T)$ and $c\cd\in L^1(0,T)$ be such that $|\dot{a}(t)|\leq b(t) + c(t) |a(t)|$ for a.a. $t\in (0,T).$ Then
\benl
\|a\cd\|_\infty \leq e^{\|c\cd\|_1}\big(|a(0)|+\|b\cd\|_1 \big).
\eenl
\end{lemma}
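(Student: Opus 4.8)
The plan is to reduce the statement to the classical integral form of Gronwall's inequality and then integrate an explicit differential inequality with an integrating factor. Since $a\cd\in W^{1,1}(0,T;\cR^n)$, the function $a\cd$ is absolutely continuous on $[0,T]$, so the fundamental theorem of calculus gives $a(t)=a(0)+\int_0^t \dot a(s)\,\dd s$ and hence, for every $t\in[0,T]$,
\[
|a(t)|\leq |a(0)|+\int_0^t |\dot a(s)|\,\dd s \leq |a(0)|+\int_0^t \big(b(s)+c(s)|a(s)|\big)\,\dd s.
\]
Because $c(s)|a(s)|\leq |c(s)|\,|a(s)|$ and $\int_0^t b\leq \|b\cd\|_1$, and since $\|c\cd\|_1=\| |c|\cd\|_1$, I may as well replace $c$ by $|c|$ from the outset; thus, writing $A:=|a(0)|+\|b\cd\|_1$ and $\beta(s):=|c(s)|\geq 0$, the hypothesis yields the integral inequality
\[
|a(t)|\leq A+\int_0^t \beta(s)\,|a(s)|\,\dd s,\qquad t\in[0,T].
\]

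Next I would close this inequality by the standard comparison argument. Set $R(t):=A+\int_0^t \beta(s)|a(s)|\,\dd s$, which is absolutely continuous with $R(0)=A$ and $|a(t)|\leq R(t)$. For a.a. $t$ one has $\dot R(t)=\beta(t)|a(t)|\leq \beta(t)R(t)$ because $\beta\geq 0$. Multiplying by the integrating factor $e^{-\int_0^t\beta}$ gives $\ddt\big(R(t)\,e^{-\int_0^t\beta(s)\,\dd s}\big)=e^{-\int_0^t\beta}\big(\dot R-\beta R\big)\leq 0$ a.e., so the product is nonincreasing and $R(t)\,e^{-\int_0^t\beta}\leq R(0)=A$. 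Therefore $|a(t)|\leq R(t)\leq A\,\exp\!\big(\int_0^t\beta(s)\,\dd s\big)$ for every $t$.

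Finally, since $\int_0^t\beta(s)\,\dd s\leq \int_0^T|c(s)|\,\dd s=\|c\cd\|_1$ for all $t\in[0,T]$, taking the supremum over $t$ yields $\|a\cd\|_\infty\leq A\,e^{\|c\cd\|_1}=e^{\|c\cd\|_1}\big(|a(0)|+\|b\cd\|_1\big)$, which is the claim. There is no genuine obstacle here: the only points requiring care are the measure-theoretic justifications, namely that $a\cd$ and $R\cd$ are absolutely continuous (so that the fundamental theorem of calculus and the a.e. differentiation of the integrating-factor product are legitimate) and the harmless reduction to the case $\beta=|c|\geq 0$, which is precisely what guarantees that the differential inequality $\dot R\leq\beta R$ points in the right direction.
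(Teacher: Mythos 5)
Your proof is correct: the reduction to the integral inequality via absolute continuity of $a\cd$, the replacement of $c$ by $|c|$, and the integrating-factor comparison with $R(t)$ are all legitimate, and the measure-theoretic points you flag (absolute continuity of $R$ and of the product with $e^{-\int_0^t\beta}$, so that an a.e.\ nonpositive derivative implies monotonicity) are exactly the ones that need care. The paper itself states this lemma as a classical recall and gives no proof, so there is nothing to compare against; your argument is the standard one and fills that gap correctly.
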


For the lemma below recall the definition of the space $\H_2$ given in \eqref{H2}.

\begin{lemma}
\label{lemmaxbar}
There exists $\rho\gr 0$ such that 
\be
\label{xbargamma}
|\xb(0)|^2+\|\xb\cd\|_2^2+|\xb(T)|^2\leq \rho
\gamma(\xb(0),\ub\cd,\vb\cd),
\ee
for every linearized trajectory $(\xb,\ub,\vb)\cd\in\H_2.$ The constant $\rho$ depends on $\|A\|_{\infty},$
$\|F_v\|_{\infty},$ $\|E\|_{\infty}$ and
$\|B\|_{\infty}.$
\end{lemma}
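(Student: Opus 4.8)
The plan is to exploit Goh's transformation, which is precisely the device that converts the direct $\vb$-dependence of the linearized dynamics into a dependence on its primitive $\yb$, thereby matching the quantities controlled by $\gamma$. Note first that $\gamma(\xb(0),\ub\cd,\vb\cd)$ dominates $|\xb(0)|^2$, $\|\ub\cd\|_2^2$, $\|\yb\cd\|_2^2$ and $|\yb(T)|^2$, where $\yb\cd$ is the primitive of $\vb\cd$; it does \emph{not} control $\|\vb\cd\|_2$. Consequently a naive application of Gronwall's Lemma to the linearized equation \eqref{lineareq} is doomed, since the term $F_v\vb$ would force an estimate in terms of $\|\vb\cd\|$, a quantity absent from $\gamma$. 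This is in fact the only genuine obstacle in the proof; once it is bypassed, the remainder is a routine combination of Gronwall's inequality and Cauchy--Schwarz.

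To remove it, I would introduce $\yb\cd$ and $\xib\cd:=\xb\cd-F_v\yb\cd$ exactly as in Goh's transformation \eqref{Goht}, so that by \eqref{xieq} the new variable solves $\dot\xib=F_x\xib+F_u\ub+B\yb$ with $\xib(0)=\xb(0)$, where $B$ is the bounded matrix given in \eqref{B1} (bounded thanks to Assumptions \ref{regular} and \ref{SmoothControls}, which make $F_v$ continuously differentiable in time). The crucial point is that the forcing term $F_u\ub+B\yb$ of this equation no longer involves $\vb$, but only the data $\ub$ and $\yb$ that $\gamma$ controls.

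Next I would apply Gronwall's Lemma \ref{GronLem} to $a\cd=\xib\cd$ with $c\equiv\|F_x\|_\infty$ and $b=\|F_u\|_\infty|\ub|+\|B\|_\infty|\yb|$, obtaining $\|\xib\cd\|_\infty\leq e^{\|F_x\|_\infty T}\big(|\xb(0)|+\|F_u\|_\infty\|\ub\cd\|_1+\|B\|_\infty\|\yb\cd\|_1\big)$. Converting the $L^1$-norms into $L^2$-norms by Cauchy--Schwarz ($\|\ub\cd\|_1\leq\sqrt{T}\,\|\ub\cd\|_2$, and likewise for $\yb$) and then squaring yields a constant $C$, depending only on $T$ and on $\|F_x\|_\infty$, $\|F_u\|_\infty$, $\|B\|_\infty$, for which $\|\xib\cd\|_\infty^2\leq C\big(|\xb(0)|^2+\|\ub\cd\|_2^2+\|\yb\cd\|_2^2\big)\leq C\gamma$.

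Finally I would undo the change of variables. Writing $\xb=\xib+F_v\yb$ and using $(a+b)^2\leq 2a^2+2b^2$ together with $\|\xib\cd\|_2^2\leq T\|\xib\cd\|_\infty^2$ and $|\xib(T)|^2\leq\|\xib\cd\|_\infty^2$, one gets $\|\xb\cd\|_2^2\leq 2\|\xib\cd\|_2^2+2\|F_v\|_\infty^2\|\yb\cd\|_2^2\leq (2TC+2\|F_v\|_\infty^2)\gamma$ and $|\xb(T)|^2\leq 2|\xib(T)|^2+2\|F_v\|_\infty^2|\yb(T)|^2\leq (2C+2\|F_v\|_\infty^2)\gamma$. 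Since $|\xb(0)|^2\leq\gamma$ trivially, summing the three bounds establishes \eqref{xbargamma} with, for instance, $\rho=1+(2T+2)C+4\|F_v\|_\infty^2$, a constant depending only on $T$ and the essential-sup norms of the coefficient matrices $F_x$, $F_u$, $F_v$ and $B$.
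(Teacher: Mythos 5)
Your proof is correct and follows essentially the same route as the paper: apply Goh's transformation to pass to $\xib\cd$ solving \eqref{xieq}, estimate $\|\xib\cd\|_\infty$ by Gronwall's Lemma \ref{GronLem} together with Cauchy--Schwarz in terms of $\gamma_\P^{1/2}$, and then recover the bounds on $\|\xb\cd\|_2$ and $|\xb(T)|$ from $\xb=\xib+F_v\yb$ (your use of $(a+b)^2\leq 2a^2+2b^2$ plays the role of the Young inequality step in the paper). The only cosmetic difference is that you make the constants fully explicit.
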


\begin{proof}
Throughout this proof, whenever we put $\rho_i$ we refer to a positive constant depending on $\|A\|_{\infty},$
$\|F_v\|_{\infty},$ $\|E\|_{\infty},$ and/or
$\|B\|_{\infty}.$
Let $(\xb,\ub,\vb)\cd\in\H_2$ and 
$(\xib,\yb)\cd$ be defined by Goh's Transformation \eqref{Goht}.
 Thus $(\xib,\ub,\yb)\cd$ is solution of \eqref{xieq}. Gronwall's Lemma \ref{GronLem} and
Cauchy-Schwarz inequality yield
\be\label{lemmazxit}
\|\xib\|_{\infty}\leq \rho_1
(|\xib(0)|^2+\|\ub\|_2^2+\|\yb\|_2^2)^{1/2}\leq
\rho_1 \gamma_\P(\xb(0),\ub,\yb,\yb(T))^{1/2},
\ee
with
$\rho_1=\rho_1(\|A\|_1,\|E\|_{\infty},\|B\|_{\infty}).$
This last inequality together with the relation between $\xib\cd$ and $\xb\cd$ provided by \eqref{Goht}
imply
\be\label{lemmazz}
\|\xb\|_2\leq \|\xib\|_2+\|F_v\|_{\infty}\|\yb\|_2\leq\rho_2
\gamma_\P(\xb(0),\ub,\yb,\yb(T))^{1/2},
\ee
for $\rho_2=\rho_2(\rho_1,\|F_v\|_{\infty}).$
On the other hand,  \eqref{Goht} and estimate
\eqref{lemmazxit} lead to
\benl
|\xb(T)|\leq |\xib(T)|+\|F_v\|_{\infty}|\yb(T)|\leq
\rho_1 \gamma_\P(\xb(0),\ub,\yb,\yb(T))^{1/2}
+\|F_v\|_{\infty}|\yb(T)|.
\eenl
Then, in view of Young's inequality `$2{ab}\leq {a^2+b^2}
$' for real numbers $a,b,$ one gets 
\be\label{lemmazzT}
|\xb(T)|^2\leq
\rho_3 \gamma_\P(\xb(0),\ub,\yb,\yb(T)),
\ee
for some
$\rho_3=\rho_3(\rho_1,\|F_v\|_{\infty}).$
The desired estimate follows from \eqref{lemmazz}
and \eqref{lemmazzT}.
\end{proof}
Notice that Lemma \ref{lemmaxbar} above gives an estimate of the linearized state in the order $\gamma.$ The following result shows that the analogous property holds for the variation of the state variable as well and it is a natural extension of a similar result given in Dmitruk \cite{Dmi87} for control-affine systems. 

\begin{lemma} \label{lemmadeltax}
Given $C\gr 0,$ there exists $\rho\gr 0$ such that 
\be
|\delta x(0)|^2+\|\delta x\cd\|^2_2+|\delta x(T)|^2\leq \rho
\gamma(\delta x(0),\delta u\cd,\delta v\cd),
\ee
for every $(x,u,v)\cd$ solution of the state equation \eqref{stateeq} having $\|v\cd\|_2\leq C,$ and where  $\delta w\cd:=w\cd-\wh\cd.$ The constant $\rho$ depends on $C,$ $\|B\|_{\infty},$ $\|\dot B\|_{\infty}$ and the Lipschitz constants of $f_i.$
\end{lemma}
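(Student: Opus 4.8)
The plan is to mirror the proof of Lemma \ref{lemmaxbar}, but now applied to the genuinely nonlinear increment $\delta x\cd = x\cd-\xh\cd$ rather than to a linearized state. The central device is again Goh's transformation \eqref{Goht}: I set $\delta y(t):=\int_0^t \delta v(s)\,\dd s$ and introduce the nonlinear analogue of $\xib$,
\[
\xi := \delta x - F_v[t]\,\delta y,
\]
where $F_v[t]$ is the matrix whose $i$-th column is $f_i(\xh(t),\uh(t))$. Since $\delta y(0)=0$ we have $\xi(0)=\delta x(0)$ and $\delta x=\xi+F_v\delta y$, while the $\gamma$-order controls $\|\delta y\cd\|_2$ and $|\delta y(T)|$ directly (these two quantities are exactly summands of $\gamma$). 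Hence it suffices to bound $|\xi(0)|^2+\|\xi\cd\|_2^2+|\xi(T)|^2$ by $\rho\gamma$ and then recombine through $|\delta x|\le|\xi|+\|F_v\|_\infty|\delta y|$.

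Next I would differentiate. Splitting off the affine dependence and using that $F$ is affine in $v$,
\[
\dot{\delta x} = F(x,u,v)-F(\xh,\uh,\vh) = \sum_{i=1}^m \delta v_i\, f_i(x,u) + G,
\]
where $G:=F(x,u,\vh)-F(\xh,\uh,\vh)$ is Lipschitz-small, $|G|\le L'(|\delta x|+|\delta u|)$ with $L'$ depending only on $\|\vh\|_\infty$ and the Lipschitz constants of the $f_i$. Differentiating $\xi$ and using $F_v\delta v=\sum_i\delta v_i f_i(\xh,\uh)$, the dangerous affine term collapses:
\[
\dot\xi = \sum_{i=1}^m \delta v_i\big(f_i(x,u)-f_i(\xh,\uh)\big) + G - \dot F_v\,\delta y.
\]
The decisive point is that after the transformation $\delta v$ multiplies only the increment $f_i(x,u)-f_i(\xh,\uh)=\mathcal O(|\delta x|+|\delta u|)$ instead of $f_i$ itself. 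If one further expands $G$ to first order, $G=F_x\delta x+F_u\delta u+\mathcal O((|\delta x|+|\delta u|)^2)$, and replaces $\dot F_v=F_xF_v-B$ (see \eqref{B1}), the equation for $\xi$ takes exactly the form of \eqref{xieq} with controlled remainders; this is where the stated dependence on $B$ (and its derivative) enters.

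Then I would set up a Gronwall estimate. Substituting $|\delta x|\le|\xi|+\|F_v\|_\infty|\delta y|$ into the three terms yields a pointwise inequality $|\dot\xi|\le c(t)|\xi|+b(t)$ with
\[
c:=L|\delta v|+L',\qquad b:=(L|\delta v|+L')\big(\|F_v\|_\infty|\delta y|+|\delta u|\big)+\|\dot F_v\|_\infty|\delta y|.
\]
Here is where the hypothesis $\|v\cd\|_2\le C$ is used: since $\|\delta v\cd\|_2\le C+\|\vh\cd\|_2=:C'$, Cauchy--Schwarz gives $\|c\cd\|_1\le L\sqrt T\,C'+L'T$, a constant depending only on $C$ and the Lipschitz data, so the Gronwall factor $e^{\|c\cd\|_1}$ stays harmlessly bounded; and every product in $b$ pairs the merely bounded factor $L|\delta v|+L'$ (bounded in $L^2$) with one of $|\delta y|,|\delta u|$ (each bounded in $L^2$ by $\gamma^{1/2}$), so that $\|b\cd\|_1=\mathcal O(\gamma^{1/2})$. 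Gronwall's Lemma \ref{GronLem} then gives $\|\xi\cd\|_\infty\le e^{\|c\cd\|_1}(|\delta x(0)|+\|b\cd\|_1)\le \rho_0\gamma^{1/2}$, using $|\delta x(0)|^2\le\gamma$. Passing back through $\delta x=\xi+F_v\delta y$ and invoking that $\|\delta y\cd\|_2^2$ and $|\delta y(T)|^2$ are summands of $\gamma$ yields the three claimed estimates.

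The only genuine obstacle is the term $\sum_i\delta v_i f_i(x,u)$ in $\dot{\delta x}$: the affine variation $\delta v$ is controlled only in the weak primitive/$L^2$ sense dictated by $\gamma$, so it cannot be fed into Gronwall against a coefficient of order one. Goh's transformation is precisely what defuses this, replacing that coefficient by the Lipschitz-small increment $f_i(x,u)-f_i(\xh,\uh)$; the residual subtlety is that $\|\delta v\cd\|_2$ is only bounded, not small, which is nonetheless enough because it always appears paired with an $\mathcal O(\gamma^{1/2})$ factor or inside the uniformly bounded Gronwall exponent. A minor technical point, handled exactly as in Lemma \ref{lemmaxbar}, is that the Lipschitz constants of the $f_i$ are to be taken on the bounded region in which trajectories with $\|v\cd\|_2\le C$ live.
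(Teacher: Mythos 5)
Your proof is correct and follows essentially the same route as the paper's: Goh's transformation $\xi=\delta x-F_v\,\delta y$ applied to the increment, the exact cancellation of the term $\sum_i\delta v_i f_i(\xh,\uh)$, a Lipschitz bound on what remains, and Gronwall's Lemma combined with Cauchy--Schwarz using $\|v\cd\|_2\le C$. Your grouping $\sum_i\delta v_i\bigl(f_i(x,u)-f_i(\xh,\uh)\bigr)+G$ is only a cosmetic rearrangement of the paper's $f_0(x,u)-f_0(\xh,\uh)+\sum_i v_i\bigl(f_i(x,u)-f_i(\xh,\uh)\bigr)$.
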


\begin{proof}
In order to simplify the notation we omit the dependence on $t.$
Consider $(x,u,v)\cd$ solution of \eqref{stateeq} with
$\|v\cd\|_2\leq C.$ Let $\delta w\cd:=w\cd-\wh\cd,$ $\delta y(t):=\int_0^t \delta v(s) {\rm d}s,$ and
$\xi\cd:=\delta x\cd-B[\cdot]\delta y\cd,$ with $y(t):=\int_0^t v(s)\mr{d}s.$ Note
that
\be \label{xidot}
\begin{split}
\dot\xi=
& f_0(x,u)-f_0(\xh,\uh)+\sum_{i=1}^m \left[v_if_i(x,u)-\vh_if_i(\xh,\uh)\right]-\dot B\delta y- \sum_{i=1}^m \delta
v_i\, f_i(\xh,\uh)\\
=&f_0(x,u)-f_0(\xh,\uh)+\sum_{i=1}^m v_i[f_i(x,u)-f_i(\xh,\uh)]-\dot{B}\delta y.
\end{split}
\ee
In view of the Lipschitz-continuity of $f_i,$
\be
|f_i(x,u)-f_i(\xh,\uh)|\leq L (|\delta x|+|\delta u|) \leq
L(|\xi|+\|B\|_{\infty}|\delta y|+|\delta u|),
\ee
for some $L\gr 0.$
Thus, from \eqref{xidot} it follows
\benl
|\dot\xi|
\leq L(|\xi|+\|B\|_{\infty}|\delta
y|+|\delta u|)(1+|v|)+\|\dot B\|_{\infty}|\delta y|.
\eenl
Applying Gronwall's Lemma \ref{GronLem} one gets
\benl
\| \xi\|_\infty \leq e^{L\|1+|v|\,\|_1}\Big[ |\xi(0)| + \left\|L(1+|v|)(\|F_v\|_\infty |\delta y| + |\delta u|)+\|\dot{F}_v\|_\infty |\delta y|\,\right\|_1  \Big],
\eenl
and Cauchy-Schwarz inequality applied to previous estimate yields
\be
\|\xi\|_{\infty} 
\leq \rho_1\big(|\xi(0)|+\|\delta y\|_1+\|\delta u\|_1
+ \|\delta y\|_2\|v\|_2 +  \|\delta
u\|_2\|v\|_2\big),
\ee
for $\rho_1=\rho_1(L,C,\|F_v\|_{\infty},\|\dot{F}_v\|_{\infty}).$
Since $\|\delta x\|_2\leq
\|\xi\|_2+\|F_v\|_{\infty}\|\delta y\|_2,$ by  previous estimate and Cauchy-Schwarz inequality,  the result follows.
\end{proof}

Finally, the following lemma gives an estimate for the difference between the variation of the state variable and the linearized state.

\begin{lemma}
\label{lemmaeta}
Consider $C\gr 0$ and $w\cd=(x,u,v)\cd\in\W$ a trajectory with $\|w\cd-\wh\cd \|_{\infty}\leq C.$ 
Set $(\delta x,\delta u,\delta v)\cd:= w\cd-\wh\cd$ and let $\xb\cd$ be the linearization of $\xh\cd$ associated with $(\delta x,\delta u,\delta v)\cd.$
Define
\be
\vartheta\cd:=\delta x\cd-\xb\cd.
\ee
Then, $\vartheta\cd$ is solution of the differential equation
\be
\label{doteta}
\begin{split}
\dot\vartheta
&= 
D_xf_{0} (\xh,\uh)\vartheta +\sum_{i=1}^m \vh_i D_xf_{i} (\xh,\uh)\vartheta 
+ 
\sum_{i=1}^m \delta v_i Df_{i} (\xh,\uh)(\delta x,\ub) + \zeta,\\
\vartheta(0) &= 0,
\end{split}
\ee
where the remainder $\zeta\cd$ is given by
\be 
\label{zeta}
\zeta:= \half D^2f_0(\xh,\uh)(\delta x,\ub)^2+\sum_{i=1}^m \half v_i  D^2f_i(\xh,\uh)(\delta x,\ub)^2+L \left(1+\sum_{i=1}^m v_i\right)|(\delta x,\ub)|^3,
\ee
and $L$ is a Lipschitz constant for $D^2f_i,$ uniformly in $i=0,\dots,m.$ Furthermore, $\zeta\cd$ satisfies the estimates
\be
\label{estzeta}
\|\zeta\cd\|_{\infty} \mi \rho_1C,\quad \|\zeta\cd\|_2 \mi \rho_1C\sqrt\gamma,
\ee
where $\rho_1= \rho_1(C,\|D^2 f\|_{\infty},L,\|v\|_\infty+1).$

If in addition, $C\rightarrow 0,$
the following estimates for $\vartheta\cd$ hold
\be
\label{esteta}
\|\vartheta\cd\|_{\infty} = o(\sqrt\gamma),
\quad 
\|\dot\vartheta\cd\|_2 = o(\sqrt\gamma).
\ee
\end{lemma}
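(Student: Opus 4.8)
The plan is to derive the differential equation \eqref{doteta} by hand, read off $\zeta$ from the second-order Taylor remainder, estimate $\zeta$ using Assumption \ref{regular}, and then close everything with Gronwall's Lemma \ref{GronLem} and Lemma \ref{lemmadeltax}. Throughout, recall that the linearized equation feeds on the control perturbations $\ub=\delta u$ and $\vb=\delta v$, and that $\xb(0)=\delta x(0)$, so only the state differs between $\delta x\cd$ and $\xb\cd$.

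First I would write $\dot{\delta x} = F(x,u,v)-F(\xh,\uh,\vh)$ and expand each $f_i(x,u)$ to second order around $(\xh,\uh)$ as in \eqref{expfi}, carefully keeping the factor $v_i=\vh_i+\delta v_i$ multiplying $f_i$. Splitting $v_i f_i(x,u)-\vh_i f_i(\xh,\uh)$ into the pieces carrying $\vh_i$ and those carrying $\delta v_i$, the zeroth-order piece $\sum_i\delta v_i f_i(\xh,\uh)$ reproduces the term $F_v\vb$ of \eqref{lineareq}, while the first-order pieces carrying $\vh_i$, together with the expansion of $f_0$, give $F_x\,\delta x+F_u\,\ub$. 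Subtracting $\dot\xb = F_x\xb+F_u\ub+F_v\vb$, the first-order-in-state terms collapse to $F_x\vartheta=D_xf_0\,\vartheta+\sum_i\vh_i D_xf_i\,\vartheta$, the $F_u\ub$ and $F_v\vb$ contributions cancel, the cross terms $\sum_i\delta v_i\,Df_i(\xh,\uh)(\delta x,\ub)$ survive, and everything quadratic or higher collects into $\zeta$. This yields \eqref{doteta}-\eqref{zeta} with $\vartheta(0)=\delta x(0)-\xb(0)=0$.

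For the bounds on $\zeta$ I would use that $D^2f_i$ is bounded (Assumption \ref{regular}) to get $|\zeta|\leq \kappa\,|(\delta x,\ub)|^2$, where $\kappa$ depends only on $\|D^2f\|_\infty$, $L$ and $\|v\|_\infty+1$ (the cubic remainder being absorbed via $|(\delta x,\ub)|\leq\sqrt2\,C$). Since $\|w\cd-\wh\cd\|_\infty\leq C$ gives $\||(\delta x,\ub)|\|_\infty\leq\sqrt2\,C$, pulling out one factor $\leq\sqrt2\,C$ gives $\|\zeta\cd\|_\infty\leq\rho_1 C$. For the $L^2$ bound I would write $|\zeta|\leq\kappa\,\||(\delta x,\ub)|\|_\infty\,|(\delta x,\ub)|$ and invoke Lemma \ref{lemmadeltax} together with the definition of $\gamma$ to obtain $\||(\delta x,\ub)|\|_2\leq\rho'\sqrt\gamma$; note that $\|v\cd\|_2\leq\sqrt T(\|\vh\|_\infty+C)$ is bounded, so Lemma \ref{lemmadeltax} indeed applies. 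This produces $\|\zeta\cd\|_2\leq\rho_1 C\sqrt\gamma$, i.e. \eqref{estzeta}.

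Finally, for \eqref{esteta} I would apply Gronwall's Lemma \ref{GronLem} to \eqref{doteta} with $c(t):=\|F_x\|_\infty$ and source $b(t):=\big|\sum_i\delta v_i\,Df_i(\xh,\uh)(\delta x,\ub)\big|+|\zeta|$. The only real obstacle is the source's dependence on $\delta v$, which $\gamma$ does not control; the key observation is that one should not integrate by parts (that would generate the uncontrolled $\dot{\ub}$), but instead bound $\|\sum_i\delta v_i\,Df_i(\xh,\uh)(\delta x,\ub)\|_1\leq\|\delta v\|_\infty\sum_i\|Df_i\|_\infty\sqrt T\,\||(\delta x,\ub)|\|_2\leq\rho'' C\sqrt\gamma$, extracting the small factor $\|\delta v\|_\infty\leq C$ and handling the remaining $L^2$ norm through Lemma \ref{lemmadeltax}. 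Combined with $\|\zeta\cd\|_1\leq\sqrt T\,\|\zeta\cd\|_2\leq\rho_1 C\sqrt\gamma$, Gronwall gives $\|\vartheta\cd\|_\infty\leq e^{\|c\|_1}\|b\|_1\leq\rho''' C\sqrt\gamma=o(\sqrt\gamma)$ as $C\to0$. The bound $\|\dot\vartheta\cd\|_2=o(\sqrt\gamma)$ then follows by inserting this into \eqref{doteta}, since $\|F_x\vartheta\|_2\leq\|F_x\|_\infty\sqrt T\,\|\vartheta\cd\|_\infty$, the source term is $\leq\rho'' C\sqrt\gamma$ exactly as above, and $\|\zeta\cd\|_2=o(\sqrt\gamma)$, each of which is $o(\sqrt\gamma)$.
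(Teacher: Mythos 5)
Your proposal is correct and follows essentially the same route as the paper: Taylor-expand each $f_i$ to second order, subtract the linearized equation so that the first-order terms collapse to $F_x\vartheta$ and the cross terms $\sum_i\delta v_i\,Df_i(\xh,\uh)(\delta x,\ub)$ survive, bound $\zeta$ via Lemma \ref{lemmadeltax}, and close with Gronwall plus Cauchy--Schwarz, extracting the small factor from $\delta v$ and controlling $\|(\delta x,\ub)\|_2$ by $\sqrt\gamma$. Your explicit treatment of $\|\dot\vartheta\|_2$ merely spells out a step the paper leaves implicit.
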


\begin{proof}
We shall note first that
\be
\label{dotdeltax1}
\dot{\delta x} =
f_0(x,u)-f_0(\xh,\uh)+\sum_{i=1}^m v_i \big[f_i(x,u)-f_i(\xh,\uh) \big] + \sum_{i=1}^m \delta v_{i}\,f_i(\xh,\uh).
\ee
Consider the following second order Taylor expansions for $f_i,$
\be
\label{Taylorfi}
f_i(x,u)= f_i(\xh,\uh) + D f_i(\xh,\uh)(\delta x,\delta u) + \half D^2f_i(\xh,\uh)(\delta x,\delta u)^2 + {L}|(\delta x,\delta u)|^3.
\ee
Combining \eqref{dotdeltax1} and \eqref{Taylorfi} yields
\be
\label{dotdeltax}
\dot{\delta x} =
D f_0(\xh,\uh)(\delta x,\delta u)+\sum_{i=1}^m v_i D f_i(\xh,\uh)(\delta x,\delta u)+\sum_{i=1}^m \delta v_{i}f_i(\xh,\uh)+\zeta,
\ee
with the remainder being given by \eqref{zeta}.
The linearized equation \eqref{lineareq} together with \eqref{dotdeltax} lead to \eqref{doteta}.
In view of \eqref{zeta} and Lemma \ref{lemmadeltax}, it can be seen that the estimates in \eqref{estzeta} hold.

On the other hand, by applying Gronwall's Lemma \ref{GronLem} to \eqref{doteta}, and using Cauchy-Schwarz inequality afterwards lead to
\benl
\|\vartheta\|_{\infty} 
\leq 
\rho_3\left\|\sum_{i=1}^m \delta v_i D f_{i} (\xh,\uh) (\delta x,\delta u) + \zeta\right\|_1 
\leq 
\rho_4 \Big[ \|\delta v\|_2(\|\delta x\|_2 + \|\delta u\|_2) + \|\zeta\|_2 \Big],
\eenl
for some positive $\rho_3,\rho_4$ depending on $\|\vh\|_{\infty}$ and $\|Df\|_{\infty}.$ 
Finally, using the estimate in Lemma \ref{lemmadeltax} and \eqref{estzeta} just obtained, the inequalities in \eqref{esteta} follow.
\end{proof}

In view of Lemmas \ref{expansionlagrangian}, \ref{lemmaxbar}, \ref{lemmadeltax} and \ref{lemmaeta} we can justify 
the following technical result that is an essential point in the proof of the sufficient condition of Theorem \ref{SC}.

\begin{lemma}
\label{lemmasc2}
Let $w\cd\in\W$ be a trajectory.
% such that $\|w-\wh\|_{\infty} \mi \eps.$
Set $(\delta x,\delta u,\delta v)\cd:= w\cd-\wh\cd,$ and $\xb\cd$ its corresponding linearized state, i.e. the solution of \eqref{lineareq}-\eqref{lineareq0} associated with $(\delta x(0),\delta u\cd,\delta v\cd).$ Assume that    $\|w\cd-\wh\cd\|_{\infty} \rightarrow 0.$
Then  
\be
\label{taylor0}
\mathcal{L}[\lambda](w) = \mathcal{L}[\lambda](\wh) %+ \int_0^T (H_u\lam\delta u_k+H_v\lam\delta v_k)\dtt
+ \Omega[\lambda](\xb,\delta u,\delta v)+o(\gamma),
\ee
for every $\lambda \in {\rm co}\, \Lambda.$
\end{lemma}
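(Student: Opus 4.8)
The plan is to start from the Lagrangian expansion of Lemma \ref{expansionlagrangian}, which for $\lambda\in\Lambda$ reads
\[
\mathcal{L}\lam(w)=\mathcal{L}\lam(\wh)+\Omega\lam(\delta x,\delta u,\delta v)+\omega\lam(\delta x,\delta u,\delta v)+\R(\delta x,\delta u,\delta v),
\]
and to reach \eqref{taylor0} by establishing three facts: that the cubic term is negligible, $\omega\lam(\delta x,\delta u,\delta v)=o(\gamma)$; that the remainder is negligible, $\R(\delta x,\delta u,\delta v)=o(\gamma)$; and that replacing the true variation $\delta x$ by its linearization $\xb$ costs only $o(\gamma)$, i.e. $\Omega\lam(\delta x,\delta u,\delta v)=\Omega\lam(\xb,\delta u,\delta v)+o(\gamma)$. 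Since $\|w\cd-\wh\cd\|_\infty\to0$, I may use $\|(\delta x,\delta u,\delta v)\|_\infty\to0$ and the boundedness of $\|v\cd\|_\infty$, so the hypotheses of Lemmas \ref{lemmaxbar}, \ref{lemmadeltax} and \ref{lemmaeta} all apply. The passage from $\lambda\in\Lambda$ to $\lambda\in{\rm co}\,\Lambda$ I leave for the end: as $\mathcal{L}\lam(w)$, $\mathcal{L}\lam(\wh)$ and $\Omega\lam$ are all affine (resp. linear) in $\lambda$, writing any $\lambda\in{\rm co}\,\Lambda$ as a finite convex combination of elements of $\Lambda$ and summing the identities, each with an $o(\gamma)$ error, yields \eqref{taylor0} for $\lambda\in{\rm co}\,\Lambda$.

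The two easy reductions rest on Lemma \ref{lemmadeltax}, which gives $|\delta x(0)|^2+\|\delta x\|_2^2+|\delta x(T)|^2\le\rho\gamma$, together with $\|\delta u\|_2^2\le\gamma$ from the definition of $\gamma$. For $\R$, I factor out a sup-norm: since $|(\delta x(0),\delta x(T))|^3\le\|(\delta x,\delta u)\|_\infty\,|(\delta x(0),\delta x(T))|^2$ and $\|(\delta x,\delta u)\|_2^2=O(\gamma)$, while $\|(\delta x,\delta u)\|_\infty\to0$ and $(1+\|v\|_\infty)$ is bounded, both summands of $\R$ are $o(1)\cdot O(\gamma)=o(\gamma)$. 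For $\omega\lam$, each of its three integrands is trilinear with exactly one factor $\delta v$, so it is bounded by $\|\delta v\|_\infty$ times a quadratic expression in $(\delta x,\delta u)$ whose integral is controlled by $\|\delta x\|_2^2+\|\delta x\|_2\|\delta u\|_2+\|\delta u\|_2^2=O(\gamma)$; since $\|\delta v\|_\infty\to0$, this gives $\omega\lam=o(\gamma)$.

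The core of the argument, and the main obstacle, is the third fact. Writing $\vartheta:=\delta x-\xb$ as in Lemma \ref{lemmaeta}, only the $x$-dependent terms of $\Omega\lam$ change, so the difference $\Omega\lam(\delta x,\delta u,\delta v)-\Omega\lam(\xb,\delta u,\delta v)$ splits into the endpoint contribution from $D^2\ell\lam(\delta x(0),\delta x(T))^2-D^2\ell\lam(\xb(0),\xb(T))^2$, the term $\intT(\delta x\tras H_{xx}\lam\delta x-\xb\tras H_{xx}\lam\xb)\dtt$, and the two mixed terms $\intT\delta u\tras H_{ux}\lam\vartheta\,\dtt$ and $\intT\delta v\tras H_{vx}\lam\vartheta\,\dtt$. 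Using $a\tras Qa-b\tras Qb=(a-b)\tras Q(a+b)$ for symmetric $Q$, the first two become linear in $\vartheta$, and I estimate them with Lemma \ref{lemmaxbar} ($\|\xb\|_2,|\xb(0)|,|\xb(T)|=O(\sqrt\gamma)$), Lemma \ref{lemmadeltax} (the same for $\delta x$), and the key bounds of Lemma \ref{lemmaeta}, namely $\vartheta(0)=0$, $\|\vartheta\|_\infty=o(\sqrt\gamma)$ and $\|\dot\vartheta\|_2=o(\sqrt\gamma)$. Every resulting product is $O(\sqrt\gamma)\cdot o(\sqrt\gamma)=o(\gamma)$, the endpoint term being $o(\gamma)$ because $\vartheta(0)=0$ leaves only $\vartheta(T)=o(\sqrt\gamma)$ paired against $O(\sqrt\gamma)$ endpoint data.

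The genuinely delicate term is $\intT\delta v\tras H_{vx}\lam\vartheta\,\dtt$: here $\gamma$ controls $\delta v$ only through its primitive and not in $L^2$, so a direct Cauchy--Schwarz bound would give only $o(\sqrt\gamma)$. I would resolve this exactly as in Goh's transformation, integrating by parts to move the derivative onto $H_{vx}\lam\vartheta$. With $\delta y(t):=\int_0^t\delta v$, the boundary term at $0$ vanishes since $\delta y(0)=\vartheta(0)=0$; the boundary term at $T$ equals $\delta y(T)\tras H_{vx}\lam[T]\vartheta(T)=O(\sqrt\gamma)\cdot o(\sqrt\gamma)$; and the remaining integral $-\intT\delta y\tras(\dot H_{vx}\lam\vartheta+H_{vx}\lam\dot\vartheta)\dtt$ is controlled by $\|\delta y\|_2(\|\vartheta\|_2+\|\dot\vartheta\|_2)\le\sqrt\gamma\cdot o(\sqrt\gamma)$, using $\|\delta y\|_2\le\sqrt\gamma$ and $|\delta y(T)|\le\sqrt\gamma$ from the definition of $\gamma$ and $\|\dot\vartheta\|_2=o(\sqrt\gamma)$ from Lemma \ref{lemmaeta}. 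Hence this term is $o(\gamma)$ as well. Collecting the three facts and passing to convex combinations in $\lambda$ then yields \eqref{taylor0}.
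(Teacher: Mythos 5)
Your proposal is correct and follows essentially the same route as the paper: expand $\mathcal{L}$ via Lemma \ref{expansionlagrangian}, absorb $\omega$ and $\R$ into $o(\gamma)$ using Lemma \ref{lemmadeltax}, reduce $\Delta\Omega=\Omega(\delta x,\delta u,\delta v)-\Omega(\xb,\delta u,\delta v)$ to the single term $\intT\delta v\tras H_{vx}\lam\vartheta\,\dtt$ via the bilinear identity and the estimates of Lemmas \ref{lemmaxbar}, \ref{lemmadeltax} and \ref{lemmaeta}, and kill that term by the same integration by parts onto the primitive of $\delta v$. The only (harmless) difference is that you pass to ${\rm co}\,\Lambda$ by convex combinations at the end, whereas the paper simply notes the expansion already holds for $\lambda\in{\rm co}\,\Lambda$ since the constants are uniform on $\Lambda$.
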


\begin{proof}
For the sake of simplicity of notation, we shall omit the dependence on $\lambda.$ 

Let us recall the expansion of the Lagrangian function given in Lemma \ref{expansionlagrangian}, and observe that it also holds for any $\lambda$ in ${\rm co}\, \Lambda.$ Next,  
notice that, by Lemma \ref{lemmadeltax}, 
$
\mathcal{L}(w)
=
\mathcal{L}(\wh)+ \Omega(\delta x,\delta u,\delta v)+o(\gamma).
$
Hence, 
\be
\label{taylorlemma}
\mathcal{L}(w)
=
\mathcal{L}(\wh)+ \Omega(\xb,\delta u,\delta v)+\Delta\Omega + o(\gamma),
\ee
with 
$
\Delta\Omega:= \Omega(\delta x,\delta u,\delta v)-\Omega(\xb,\delta u,\delta v).
$
The next step is using Lemmas \ref{lemmaxbar}, \ref{lemmadeltax} and \ref{lemmaeta} to  prove that
\begin{equation}
\label{rest}
\Delta \Omega=o(\gamma).
\end{equation}
Note  that $\Q(a,a)-\Q(b,b)=\Q(a+b,a-b),$ for any bilinear mapping $\Q,$ and any pair $a,b$ of elements in its domain. Set $\vartheta\cd:=\delta x\cd-  \xb\cd$ as it is done in Lemma \ref{lemmaeta}. 
Hence, 
\benl
\begin{split}
\Delta \Omega
= &\,
\half\ell'' \Big( (\delta x(0)+\xb(0),\delta x(T)+\xb(T)) , (0,\vartheta(T)) \Big) \\
& +\ds\int_0^T [\half(\delta x+\xb)\tras Q\vartheta + \delta u\tras E\vartheta + \delta v\tras C\vartheta   ] \dtt.
\end{split}
\eenl
The estimates in Lemmas \ref{lemmaxbar}, \ref{lemmadeltax} and \ref{lemmaeta} yield
$\Delta \Omega = \intT \delta v\tras C\vartheta  \dtt + o(\gamma).$
Integrating by parts in the latter expression and using \eqref{esteta} leads to
\benl
\intT \delta v\tras C\vartheta  \dtt 
=
[\yb\tras C\vartheta]_0^T - \intT \yb\tras(\dot {C}\vartheta + C \dot\vartheta  )\dtt
= o(\gamma), 
\eenl
and hence the desired result follows.
\end{proof}

\bibliographystyle{plain}
\bibliography{../../BIB/shooting,../../BIB/singulararc,../../BIB/hjb,../../BIB/diopt,../../BIB/generalsoledad}

\end{document}